\newtheorem{thm}{Theorem}[section]
\newtheorem{cor}[thm]{Corollary}
\newtheorem{lem}[thm]{Lemma}
\newtheorem{prop}[thm]{Proposition}
\theoremstyle{definition}
\newtheorem{dfn}[thm]{Definition}
\newtheorem{ntn}[thm]{Notation}
\theoremstyle{remark}
\newtheorem{rmk}[thm]{Remark}
\newtheorem{example}[thm]{Example}
\def\namedlabel#1#2{\begingroup
	#2%
	\def\@currentlabel{#2}%
	\phantomsection\label{#1}\endgroup
}
\g@addto@macro\bfseries{\boldmath}
\DeclareMathOperator{\diag}{diag}
\DeclareMathOperator{\id}{id}
\DeclareMathOperator{\spaan}{span}
\DeclareMathOperator{\Aut}{Aut}
\DeclareMathOperator{\End}{End}
\DeclareMathOperator{\dom}{dom}
\DeclareMathOperator{\coker}{coker}
\DeclareMathOperator{\stab}{stab}
\DeclareMathOperator{\sgn}{sgn}
\DeclareMathOperator{\ox}{\otimes}
\newcommand\restr[2]{{
		\left.\kern-\nulldelimiterspace 
		#1 
		\vphantom{|} 
		\right|_{#2} 
}}
\newcommand{\ol}[1]{\overline{#1}}
\newcommand{\pointme}[4]{\chi^{#1}_{#2,#3,#4}}
\newcommand{\chare}[5]{\chi^{#1}_{#2,#3,#4,#5}}
\newcommand{\pointmv}[4]{\epsilon^{#1}_{#2,#3,#4}}
\newcommand{\us}[2]{u_{s(#1),\alpha_{\ol{#1}}(#2)}}
\newcommand{\bs}{\backslash}
\newcommand{\CC}{\mathbb{C}}
\newcommand{\NN}{\mathbb{N}}
\newcommand{\TT}{\mathbb{T}}
\newcommand{\ZZ}{\mathbb{Z}}
\newcommand{\Aa}{\mathcal{A}}
\newcommand{\Cc}{\mathcal{C}}
\newcommand{\Ee}{\mathcal{E}}
\newcommand{\Gg}{\mathcal{G}}
\newcommand{\Kk}{\mathcal{K}}
\newcommand{\Oo}{\mathcal{O}}
\newcommand{\Tt}{\mathcal{T}}
\newcommand{\Zz}{\mathcal{Z}}
\newcommand{\hl}[1]{{\em#1}}
\begin{document}

\title{A Cuntz-Pimsner model for the $C^*$-algebra of a graph of groups }
\author{Alexander Mundey}
\author{Adam Rennie}


\date{}

\begin{abstract}
	We provide a Cuntz-Pimsner model for graph of groups $C^*$-algebras.
	This allows us to compute the $K$-theory of a range of examples and show that graph of groups $C^*$-algebras can be realised as Exel-Pardo algebras. We also make a preliminary investigation of whether the crossed product algebra of  Baumslag-Solitar groups acting on the boundary of certain trees satisfies Poincar\'e duality in $KK$-theory. By constructing a $K$-theory duality class we compute the $K$-homology of these crossed products.
\end{abstract}

\maketitle


\section{Introduction}

In this article we provide a Cuntz-Pimsner model for the $C^*$-algebras of graphs of groups introduced in \cite{BMPST17}. By associating an operator algebra
to a graph of groups, we obtain invariants
of the dynamics via the $K$-theory and $K$-homology of the algebra.

Graphs of groups were introduced by Bass \cite{Bas93} and Serre \cite{Ser80} to study the structure of groups via their action on trees. They have since become an important tool in geometric group theory and low-dimensional topology, see eg. \cite{DD89}.

The relationship between graphs of groups and associated operator algebras has developed sporadically. A construction which parallels ours, for graphs of groups of finite type, was considered by Okayasu, \cite{Oka05}. Graphs of groups of finite type correspond to actions of virtually free groups on trees \cite[Theorem 7.3]{SW79}. Like the algebra considered in \cite{BMPST17}, our construction works for a much larger class of examples.
Pimsner \cite{Pim86} has used graphs of groups to decompose $KK$-groups for crossed products algebras, while Julg and Valette \cite{JV84} used similar ideas to investigate $K$-amenability of groups via their action on trees. More recently, graphs of $C^*$-algebras \cite{FF14} have been introduced to investigate the $K$-theory of quantum groups.

In \cite{BMPST17}, a generator and relations picture of an operator algebra associated to a graph of groups was presented, and we recall this in Section \ref{sec:prelims}. The associated algebra was shown to be Morita equivalent to a crossed product, and so has a groupoid model. The crossed product arises from the action of the fundamental group of the graph of groups on the boundary of its universal covering tree. Groupoid models are helpful because the associated algebra can be described in terms of a dense subalgebra of functions, and the KMS-states can be described using the machinery of \cite{Ren80}.

Groupoid models are not so helpful for computing $K$-theory, and in this paper we supplement the groupoid picture with a Cuntz-Pimsner description in Section~\ref{sec:cpmodel}. The correspondence from which the Cuntz-Pimsner algebra is constructed is simple enough to make $K$-theory computations tractable, as we show in Section~\ref{sec:Ktheory}. This correspondence, described in Section \ref{sec:gog-cozzie}, is built from direct summands related to both Rieffel imprimitivity bimodules \cite{Rie74Ind} and Kaliszewski-Larsen-Quigg's subgroup correspondences \cite{KLQ18}. The construction of the correspondence is also reminiscent of the correspondences associated to the (dual of) a directed graph.

In addition to $K$-theory computations,
identifying a Cuntz-Pimsner model for graph of groups algebras allows one to import considerable existing technology for their study. For example, the gauge invariant ideals can be classified \cite{Kat07}.  Cuntz-Pimsner algebras have a distinguished subalgebra, the fixed point of the gauge action called the core.
The core is often more amenable to study, being a direct limit, and ``supports'' many of the invariants of dynamical interest, \cite{Kat04cor,Pim97,CNNR11}.
A Cuntz-Pimsner model also provides a canonical choice of ``Toeplitz graph of groups algebra'', namely the Toeplitz algebra of the associated correspondence.

It is also worthy of note that having a Cuntz-Pimsner model presents a crossed product by a fairly complicated group as a ``generalised crossed product'' by the integers.

In Section~\ref{sec:ExelPardo}, by constructing a Morita equivalent correspondence, we obtain a Morita equivalent Cuntz-Pimsner algebra, which can be directly shown to be an Exel-Pardo algebra \cite{EP17}. In particular, from a graph of groups we construct a directed graph acted upon by a group with a relevant cocycle, so that the associated Exel-Pardo algebra agrees with our Morita equivalent Cuntz-Pimsner algebra.

In addition to $K$-theory, one can also consider $K$-homology as an invariant of a $C^*$-algebra. To access $K$-homological information one often uses index theory and the pairing with $K$-theory. Many of the graphs of groups $C^*$-algebras we consider have torsion in their $K$-theory and so this method does not apply \cite[Chapter 7]{HR00}. We show in Section~\ref{sec:poincare}, using the Poincar\'e duality techniques of \cite{RRS19}, that we can access the $K$-homology of an important class of examples, Baumslag-Solitar groups acting on the boundary of certain regular trees.

\subsection*{Acknowledgements}

Both authors would like to thank Aidan Sims for discussions about amenability, and the first author would like to thank Nathan Brownlowe and Jack Spielberg for their many insights into graph of groups $C^*$-algebras.

\section{Preliminaries}
\label{sec:prelims}

\subsection{Graphs of groups}
We begin by recalling the notion of an (undirected) graph.

\begin{dfn}
	A \hl{graph} $\Gamma = (\Gamma^0,\Gamma^1,r,s)$  consists of a countable sets of \hl{vertices} $\Gamma^0$ and \hl{edges} $\Gamma^1$ together with a \hl{range map} $r \colon \Gamma^1 \to \Gamma^0$ and \hl{source map} $s \colon \Gamma^1 \to \Gamma^0$. We also assume that there is an involutive ``edge reversal'' map $e \mapsto \ol{e}$ on $\Gamma^1$ satisfying $e \ne \ol{e}$ and $r(e) = s(\ol{e})$.
	An \hl{orientation} of a graph $\Gamma$ is a collection of edges $\Gamma_+^1 \subseteq \Gamma^1$ such that for each $e \in \Gamma^1$ precisely one of $e$ or $\ol{e}$ is contained in $\Gamma_+^1$.
\end{dfn}

We always assume that a graph comes with an orientation, but all constructions we consider are independent of this choice of orientation. With a choice of orientation,  a graph can be considered as a directed graph in which each edge has a `ghost' edge pointing in the opposite orientation.

Graphs are usually represented by a diagram. For a graph $\Gamma$ and $e \in \Gamma^1_+$ we either draw the edge $\ol{e}$ with a dashed line, as on the left of following figure, or omit it entirely as on the right:
\[
	\begin{tikzpicture}
		[baseline=0.25ex,vertex/.style={circle, fill=black, inner sep=2pt}]

		\node[vertex,blue] (a) at (0,0) {};%
		\node[vertex,red] (b) at (3,0) {};%
		\node[inner sep = 2pt, anchor=south] at (a.north) {};%
		\node[inner sep = 2pt, anchor=south] at (b.north) {};%
		\draw[-latex,thick] (b) to [in=30, out=150] node[midway, anchor=south,inner sep=2.5pt] {$e$} (a);
		\draw[-latex,thick,dashed] (a) to [in=210, out=330] node[midway, anchor=south,inner sep=2.5pt] {$\ol{e}$} (b);
	\end{tikzpicture}
	\qquad \qquad \qquad
	\begin{tikzpicture}
		[baseline=0.25ex,vertex/.style={circle, fill=black, inner sep=2pt}]

		\node[vertex,blue] (a) at (0,0) {};%
		\node[vertex,red] (b) at (3,0) {};%
		\node[inner sep = 2pt, anchor=south] at (a.north) {};%
		\node[inner sep = 2pt, anchor=south] at (b.north) {};%
		\draw[-latex,thick] (b) -- (a)
		node[pos=0.5, anchor=south,inner sep=2.5pt] {$e$} ;
	\end{tikzpicture}.
\]

We say that a graph $\Gamma$ is \hl{locally finite} if $|r^{-1}(v)|<\infty$ for all $v \in \Gamma^0$, and \hl{nonsingular} $r^{-1}(r(e)) \ne \{e\}$ for all $e \in \Gamma^1$. In particular, a graph is nonsingular if and only if every vertex has valence strictly greater than $1$.
For each $n \in \NN$ we write $\Gamma^n = \{ e_1e_2\cdots e_n \mid e_i \in \Gamma^1, s(e_i) = r(e_{i+1}) \}$ for the collection of \hl{paths of length} $n$. Our convention for the direction of paths is the same as that of \cite{Rae05}. A \hl{circuit} is a path $e_1 \cdots e_k \in \Gamma^k$ for some $k \in \NN$ such that $r(e_1) = s(e_k)$ and $e_i \ne \ol{e_{i+1}}$ for all $1 \le i \le k-1$ and $e_{k-1} \ne \ol{e_1}$.
A \hl{tree} is a non-empty connected graph without circuits.

A group $H$ is said to act on the left of a tree $X = (X^0,X^1,r,s)$ if it acts on the left of both $X^0$ and $X^1$ in such a way that $r(h \cdot e) = h \cdot r(e)$ and $s(h \cdot e) = h \cdot s(e)$ for all $h \in H$ and $e \in X^1$. We say that $H$ acts \hl{without inversion} if $h \cdot e \ne \ol{e}$ for all $h \in H$ and $e \in X^1$. By performing barycentric subdivision on the edges of $X$, the group $H$ can always be made to act without inversion \cite[Section 3.1]{Ser80}. If $H$ acts without inversion on $X$, then there is a well-defined quotient graph $H\bs X = (H\bs X^0,H\bs X^1,r_{H \bs X},s_{H \bs X})$, with $r_{H \bs X} (He) = Hr(e) $, $s_{H \bs X} (He) = Hs(e)$, and $\ol{He} = H \ol{e}$ for all $e \in X^1$.

Bass-Serre Theory \cite{Ser80,Bas93} asks what additional data on the quotient graph $H \bs X$ is required to reconstruct the original action of $H$ on $X$.
The central idea is that this additional data can be captured in a ``graph of groups'' which we now define.

\begin{dfn}
	A {\em graph of groups} $\Gg = (\Gamma,G)$ consists of a connected graph $\Gamma$ together with:
	\begin{enumerate}
		\setlength\itemsep{0.5em}
		\item a discrete group $G_x$ for each vertex $x \in \Gamma^0$;
		\item a discrete group $G_e$ for each edge $e \in \Gamma^1$, with $G_e = G_{\ol{e}}$; and
		\item a monomorphism $\alpha_e\colon G_e \to G_{r(e)}$ for each $e \in \Gamma^1$.
	\end{enumerate}
\end{dfn}

We give a brief summary of the duality between graphs of groups and group actions on trees.  The interested reader is directed towards \cite{Ser80,Bas93,DD89}, or the introduction of \cite{BMPST17} for further details.

Suppose that $H$ acts on a tree $X=(X^0,X^1,r,s)$ without inversions. Set $\Gamma := H \bs X$. Fix a fundamental domain $Y = (Y^0,Y^1,r,s)$ of the action of $H$ on $X$. That is, $Y$ is a connected subgraph of $X$ such that $Y^1$ contains precisely one edge from each edge orbit. Note that $Y^0$ could contain more than one vertex from each vertex orbit.

For each $v \in Y^0$ we associate  to the vertex $Hv \in \Gamma^0$ the group $G_{Hv} :=\stab_H(v)$, and for each $e \in Y^1$ we associate  to $He \in \Gamma^1$ the group $G_{He} := \stab_H (e)$. Note that if $v,\,w \in Y^0$ are in the same orbit under the action of $H$, then $\stab_H(v) \cong \stab_H(w)$ so up to isomorphism our choice of vertex groups is consistent.
For each $e \in  Y^1$  the inclusion of $\stab_H(e)$ in $\stab_H(r(e))$ induces a monomorphism $\alpha_{He} \colon G_{He} \to G_{r(He)}$.
The graph of groups $\Gg = (\Gamma = H \bs X,G)$ constructed in this way is called the \hl{quotient graph of groups} for the action of $H$ on $X$.

Conversely, if one starts with a graph of groups $\Gg = (\Gamma,G)$, then there is a group $\pi_1(\Gg)$, called the \hl{fundamental group} of $\Gg$, and a \hl{universal covering tree} $X_\Gg$, both constructed from paths in $\Gg$. The fundamental group $\pi_1(\Gg)$ acts naturally, without inversions, on the tree $X_\Gg$. We will not make explicit use of the construction of the fundamental group nor the universal covering tree, and as such direct the interested reader to  \cite{Ser80,Bas93,BMPST17} for their construction.

\emph{The crux of Bass-Serre theory is that studying discrete group actions on trees is equivalent to studying graphs of groups. In particular, the quotient graph of groups construction and the construction of the fundamental group acting on the universal covering tree are mutually inverse \cite[Theorem 13]{Ser80}.}

We impose conditions on the graphs of groups that we work with.
\begin{dfn}
	A graph of groups $\Gg = (\Gamma,G)$ is said to be \hl{locally finite} if the underlying graph $\Gamma$ is locally finite and for all $e \in \Gamma^1$, the subgroup  $\alpha_e(G_e)$ has finite index in $G_{r(e)}$. A graph of groups $\Gg = (\Gamma,G)$ is said to be \hl{nonsingular} if for all $e \in \Gamma^1$ with $r^{-1}(r(e)) = \{e\}$ the monomorphism $\alpha_e\colon G_e \to G_{r(e)}$ is not an isomorphism.
\end{dfn}
Local finiteness and nonsingularity of $\Gg$ is equivalent to local finiteness and nonsingularity of the associated universal covering tree $X_\Gg$. \emph{All graphs of groups considered in this article will be locally finite and nonsingular.}

\begin{example}\label{ex:PSL2Z}
	The modular group $H:= PSL_2(\ZZ) \cong \ZZ_2 * \ZZ_3 = \langle a ,b \mid a^2 = b^3 = 1 \rangle$ acts without inversion on the $(2,3)$-regular tree,
	\[
		\begin{tikzpicture}
			[baseline=0.25ex,vertex/.style={circle, fill=black, inner sep=2pt}]

			\node[vertex,blue,label={[label distance=0.01]60:{$w$}}] (a) at (0,0) {};%

			\node[vertex,red,label={[label distance=0.02]60:{$v$}}] (b) at (-1,0) {};%
			\node[vertex,red] (ab) at (1,0) {};%

			\node[vertex,blue] (ba) at ($(b) + (135:1)$) {};%
			\node[vertex,blue] (bba) at ($(b) + (225:1)$) {};%
			\node[vertex,blue] (bab) at ($(ab) + (45:1)$) {};%
			\node[vertex,blue] (bbab) at ($(ab) + (-45:1)$) {};%

			\node[vertex,red] (aba) at ($(ba) + (135:1)$) {};%
			\node[vertex,red] (abba) at ($(bba) + (225:1)$) {};%
			\node[vertex,red] (abab) at ($(bab) + (45:1)$) {};%
			\node[vertex,red] (abbab) at ($(bbab) + (-45:1)$) {};%

			\node (baba) at ($(aba) + (90:0.75)$) {};%
			\node (bbaba) at ($(aba) + (180:0.75)$) {};%
			\node (babba) at ($(abba) + (180:0.75)$) {};%
			\node (bbabba) at ($(abba) + (270:0.75)$) {};%
			\node (babab) at ($(abab) + (90:0.75)$) {};%
			\node (bbabab) at ($(abab) + (0:0.75)$) {};%
			\node (babbab) at ($(abbab) + (0:0.75)$) {};%
			\node (bbabbab) at ($(abbab) + (270:0.75)$) {};%

			\draw[thick,-latex] (a) --node[midway,below] {$e$} (b) ;
			\draw[thick,-latex]	(a) -- (ab);
			\draw[thick,-latex]	(ba) -- (b);
			\draw[thick,-latex]	(bba) -- (b);
			\draw[thick,-latex]	(bab) -- (ab);
			\draw[thick,-latex]	(bbab) -- (ab);
			\draw[thick,-latex]	(ba) -- (aba);
			\draw[thick,-latex]	(bba) -- (abba);
			\draw[thick,-latex]	(bab) -- (abab);
			\draw[thick,-latex]	(bbab) -- (abbab);

			\draw[dashed,thick,-latex] (baba) -- (aba) ;
			\draw[dashed,thick,-latex] (bbaba) -- (aba);
			\draw[dashed,thick,-latex] (babba) -- (abba);
			\draw[dashed,thick,-latex] (bbabba) -- (abba);
			\draw[dashed,thick,-latex] (babab) -- (abab) ;
			\draw[dashed,thick,-latex] (bbabab) -- (abab);
			\draw[dashed,thick,-latex] (babbab) -- (abbab);
			\draw[dashed,thick,-latex] (bbabbab) -- (abbab);

			\draw[-stealth,blue,dashed] (a) arc(0:115:1) node[midway,above] {$b$} (ba);
			\draw[-stealth,blue,dashed] (ba) arc(135:205:1) node[midway,left] {$b$} (bba);
			\draw[-stealth,blue,dashed] (bba) arc(225:340:1) node[midway,below] {$b$} (a);

			\draw[-stealth,red,dashed] (ab) arc(0:160:1) node[midway,above] {$a$}(b);
			\draw[-stealth,red,dashed] (b) arc(180:340:1) node[midway,below] {$a$} (ab);
		\end{tikzpicture}.
	\]
	The generator $a$ acts by rotation around a $2$-valent vertex $w$, while $b$ acts by rotation around an adjacent $3$-valent vertex $v$. The orbits of vertices are coloured either red or blue in the above figure. As such, the quotient graph $\Gamma$ consists of $1$ edge and $2$ vertices. We also have $ \stab_H(v) = \langle b \mid b^3 = 1 \rangle$ and $\stab_H(w) = \langle a \mid a^2 = 1 \rangle$.
	The associated graph of groups the quotient graph of groups $\Gg = (\Gamma,G)$ is drawn in the following way:
	\[
		\begin{tikzpicture}
			[baseline=0.25ex,vertex/.style={circle, fill=black, inner sep=2pt}]

			\node[vertex,red] (a) at (0,0) {};%
			\node[vertex,blue] (b) at (3,0) {};%
			\node[inner sep = 2pt, anchor=south] at (a.north) {$\ZZ_3$};%
			\node[inner sep = 2pt, anchor=south] at (b.north) {$\ZZ_2$};%
			\draw[thick,-latex] (b) -- (a)
			node[pos=0.5, anchor=south,inner sep=2.5pt] {$\{1\}$} ;
		\end{tikzpicture}.
		\vspace*{10pt}
	\]
	Note that we have suppressed the inclusion maps from $\{1\}$ into both $\ZZ_3$ and $\ZZ_2$.
	Here $\Gg$ is both locally finite and nonsingular.
\end{example}

\begin{example}\label{ex:DinfBS}
	The Baumslag-Solitar group
	\[
		BS(1,2) = \langle a ,t \mid tat^{-1} = a^2 \rangle,
	\]
	acts on the infinite $3$-regular tree $X$ in a manner which we now spend some time describing.

	Fix a vertex $v \in X^0$ and an edge $e \in X^1$ with $r(e) = v$. Consider the infinite rooted binary tree $X'$ with edge set equal to all the edges of $X^1 \setminus \{e,\ol{e}\}$ which are in the same connected component as $v$. We label the vertices of $X'$ with words from the alphabet $\{0,1\}$ using the binary structure of the tree. In particular, the vertices adjacent to $v$ in $X'$ are labelled by $0$ and $1$, and inductively if a vertex is labelled by $w_1 \cdots w_k \in \{0,1\}^k$, then it is adjacent to the vertices labelled by $w_1 \cdots w_k 0$, $w_1 \cdots w_k 1$, and $w_1 \cdots w_{k-1}$. The labelling can be observed in the following diagram:
	\def\y{1.4}
	\def\x{2.5}
	\[
		\begin{tikzpicture}
			[baseline=0.25ex,vertex/.style={circle, fill=black, inner sep=2pt},label distance = 0.01]


			\node[vertex,blue] (w) at (-1*\x,-1.5*\y) {};%
			\node (w1) at ($(w) - 0.5*(\x,2*\y)$) {};%
			\node (w2) at ($(w) + 0.7*(\x,-1*\y)$) {};%

			\node[vertex,blue,label={[label distance=0.01]90:{$v$}}] (v) at (0,0) {};%

			\node[vertex,blue,label={[label distance=0.01]90:{\scriptsize$0$}}] (0) at (1*\x,\y) {};%
			\node[vertex,blue,label={[label distance=0.01]90:{\scriptsize$1$}}] (1) at (1*\x,-1*\y) {};%

			\node[vertex,blue,label={[label distance=0.01]90:{\scriptsize$00$}}] (00) at (2*\x,1.5*\y) {};%
			\node[vertex,blue,label={[label distance=0.01]90:{\scriptsize$01$}}] (01) at (2*\x,0.5*\y) {};%
			\node[vertex,blue,label={[label distance=0.01]90:{\scriptsize$10$}}] (10) at (2*\x,-0.5*\y) {};%
			\node[vertex,blue,label={[label distance=0.01]90:{\scriptsize$11$}}] (11) at (2*\x,-1.5*\y) {};

			\node[vertex,blue,label={[label distance=0.01]90:{\scriptsize$000$}}] (000) at (3*\x,1.75*\y) {};%
			\node[vertex,blue,label={[label distance=0.01]90:{\scriptsize$001$}}] (001) at (3*\x,1.25*\y) {};%
			\node[vertex,blue,label={[label distance=0.01]90:{\scriptsize$010$}}] (010) at (3*\x,0.75*\y) {};%
			\node[vertex,blue,label={[label distance=0.01]90:{\scriptsize$011$}}] (011) at (3*\x,0.25*\y) {};
			\node[vertex,blue,label={[label distance=0.01]90:{\scriptsize$100$}}] (100) at (3*\x,-0.25*\y) {};
			\node[vertex,blue,label={[label distance=0.01]90:{\scriptsize$101$}}] (101) at (3*\x,-0.75*\y) {};
			\node[vertex,blue,label={[label distance=0.01]90:{\scriptsize$110$}}] (110) at (3*\x,-1.25*\y) {};%
			\node[vertex,blue,label={[label distance=0.01]90:{\scriptsize$111$}}] (111) at (3*\x,-1.75*\y) {};%

			\node (0000) at ($(000)+(0.5*\x,0.0625*\y)$) {};%
			\node (0001) at ($(000)+(0.5*\x,-0.0625*\y)$) {};%
			\node (0010) at ($(001)+(0.5*\x,0.0625*\y)$) {};%
			\node (0011) at ($(001)+(0.5*\x,-0.0625*\y)$) {};%
			\node (0100) at ($(010)+(0.5*\x,0.0625*\y)$) {};%
			\node (0101) at ($(010)+(0.5*\x,-0.0625*\y)$) {};%
			\node (0110) at ($(011)+(0.5*\x,0.0625*\y)$) {};%
			\node (0111) at ($(011)+(0.5*\x,-0.0625*\y)$) {};%
			\node (1000) at ($(100)+(0.5*\x,0.0625*\y)$) {};%
			\node (1001) at ($(100)+(0.5*\x,-0.0625*\y)$) {};%
			\node (1010) at ($(101)+(0.5*\x,0.0625*\y)$) {};%
			\node (1011) at ($(101)+(0.5*\x,-0.0625*\y)$) {};%
			\node (1100) at ($(110)+(0.5*\x,0.0625*\y)$) {};%
			\node (1101) at ($(110)+(0.5*\x,-0.0625*\y)$) {};%
			\node (1110) at ($(111)+(0.5*\x,0.0625*\y)$) {};%
			\node (1111) at ($(111)+(0.5*\x,-0.0625*\y)$) {};%

			\draw[thick,-stealth]
			(v) -- (0) node[below,midway] {$f$};
			;%

			\draw[thick,-stealth]
			(v) edge (1)
			;%

			\draw[thick,-stealth]
			(0) edge (00)
			edge (01)
			;%

			\draw[thick,-stealth]
			(00) edge (000)
			edge (001)
			;%

			\draw[thick,-stealth]
			(01) edge (010)
			edge (011)
			;%

			\draw[thick,-stealth]
			(1) edge (10)
			edge (11)
			;%

			\draw[thick,-stealth]
			(10) edge (100)
			edge (101)
			;%

			\draw[thick,-stealth]
			(11) edge (110)
			edge (111)
			;%

			\draw[thick,-stealth]
			(w) -- (v) node[below,midway] {$e$}
			;%

			\draw[dashed,-stealth]
			(w1) edge (w)
			(w2) edge (w)
			;%

			\draw[dashed]
			(000) -- (0000)
			(000) -- (0001)
			(001) -- (0010)
			(001) -- (0011)
			(010) -- (0100)
			(010) -- (0101)
			(011) -- (0110)
			(011) -- (0111)
			(100) -- (1000)
			(100) -- (1001)
			(101) -- (1010)
			(101) -- (1011)
			(110) -- (1100)
			(110) -- (1101)
			(111) -- (1110)
			(111) -- (1111)
			;%

			\draw[-stealth,red,dashed] (w) to[out = 90, in = 180, looseness=1, edge node={node [midway,above] {$t$}}] (v);

			\draw[-stealth,red,dashed] (v) to[out = 60, in = 190, looseness=1, edge node={node [midway,above] {$t$}}] (0);

			\draw[-stealth,red,dashed] (0) to[out = 45, in = 170, looseness=1, edge node={node [midway,above] {$t$}}] (00);

			\draw[-stealth,red,dashed] (00) to[out = 30, in = 160, looseness=1, edge node={node [midway,above] {$t$}}] (000);

			\draw[-stealth,orange,dashed] (0) to[out = -60, in = 60, looseness=1, edge node={node [midway,right] {$a$}}] (1);

			\draw[-stealth,orange,dashed] (1) to[out = -240, in = 240, looseness=1, edge node={node [midway,left] {$a$}}] (0);

			\draw[-stealth,orange,dashed] (00) to[out = -60, in = 60, looseness=1, edge node={node [pos=0.25,right] {$a$}}] (10);

			\draw[-stealth,orange,dashed] (10) to[out = -240, in = 240, looseness=1, edge node={node [midway,left] {$a$}}] (01);

			\draw[-stealth,orange,dashed] (01) to[out = -60, in = 60, looseness=1, edge node={node [pos=0.75,right] {$a$}}] (11);

			\draw[-stealth,orange,dashed] (11) to[out = -240, in = 240, looseness=1, edge node={node [midway,left] {$a$}}] (00);

		\end{tikzpicture}.
	\]
	Fix a doubly infinite path $\mu$ in $X$ containing $e$ and passing through the vertices labelled by $0,\,00,\,000,\ldots$. The generator $t$ of $G$ acts hyperbolically on $X$ by translating $X$ along the infinite path $\mu$ in such a way that $t \cdot v = 0$ and $t \cdot w_1\cdots w_k = 0w_1\cdots w_k$ in the subtree $X'$.

	The generator $a$ of $BS(1,2)$ fixes each edge in $X^1 \setminus (X')^1$ and acts via a so-called ``odometer action'' on the remaining vertices. In particular, the action is defined recursively by
	\[
		a \cdot w_1 w_2 \cdots w_k
		= \begin{cases}
			1 w_2 \cdots w_k           & \text{if } w_1 = 0  \\
			0 (a \cdot w_2 \cdots w_k) & \text{if } w_1 = 1.
		\end{cases}
	\]
	It is relatively straightforward to check that the actions of $t$ and $a$ on $X$ respect the relations of $BS(1,2)$, and therefore give a well-defined action of $BS(1,2)$ on $X$.
	The action is transitive on both edges and vertices. Consequently, the quotient graph $BS(1,2) \bs X$ is a loop. One checks that with $v$ and $f$ as in the diagram
	\begin{align*}
		\stab_{BS(1,2)}(v) & = \langle a \rangle \cong \ZZ, \quad \text{ and}           \\
		\stab_{BS(1,2)}(f) & = \stab_{BS(1,2)}(0) = \langle tat^{-1} \rangle \cong \ZZ.
	\end{align*}
	Using additive notation for the integers, in the quotient graph of groups for the action of $BS(1,2)$ on $X$ we have $\alpha_{BS(1,2)e} = \id$ and $\alpha_{BS(1,2)\ol{e}} \colon k \mapsto 2k$ which we represent diagrammatically as,
	\[
		\begin{tikzpicture}[scale=2,baseline=0.5ex, vertex/.style={circle, fill=black, inner sep=2pt}]
			\begin{scope}
				\clip (-0.5,-.6) rectangle (1.2,.6);

				\node[vertex,blue] (0_0) at (0,0) [circle] {};


				\draw[-stealth,thick] (.75,0) .. controls (.75,.5) and (.1,.5) .. (0_0) node[pos=0, inner sep=3pt, anchor=west] {$\ZZ$}
				node[pos=0.5, anchor=south,inner sep=3pt]{$ \scriptstyle\id$};

				\draw[thick] (.75,0) .. controls (.75,-.5) and (.1,-.5) .. (0_0) node[pos=0.5, anchor=north,inner sep=3pt]{$\scriptstyle k \mapsto 2k$};

				\draw (-.2,0) node {$\ZZ$};

			\end{scope}

		\end{tikzpicture}.
	\]
	Similar considerations show that for all $m,n \in \ZZ$ the group $BS(m,n) = \langle a,t \mid t a^m t^{-1} = a^n \rangle $ acts on the $(|m|+|n|)$-regular tree, with the quotient graph of groups given by
	\[
		\begin{tikzpicture}[scale=2,baseline=0.5ex, vertex/.style={circle, fill=black, inner sep=2pt}]
			\begin{scope}
				\clip (-0.5,-.6) rectangle (1.2,.6);

				\node[vertex,blue] (0_0) at (0,0) [circle] {};


				\draw[-stealth,thick] (.75,0) .. controls (.75,.5) and (.1,.5) .. (0_0) node[pos=0, inner sep=3pt, anchor=west] {$\ZZ$}
				node[pos=0.5, anchor=south,inner sep=3pt]{$\scriptstyle k \mapsto mk$};

				\draw[thick] (.75,0) .. controls (.75,-.5) and (.1,-.5) .. (0_0) node[pos=0.5, anchor=north,inner sep=3pt]{$\scriptstyle k \mapsto nk$};

				\draw (-.2,0) node {$\ZZ$};

			\end{scope}

		\end{tikzpicture}.
	\]
\end{example}

\subsection{$C^*$-algebras associated to graphs of groups}
In \cite{BMPST17} the authors introduced a $C^*$-algebra associated to a graph of groups using the notion of  $\Gg$-families.  A $\Gg$-family is analogous to a Cuntz-Krieger-family, which are used to define directed graph $C^*$-algebras (see \cite{Rae05}).
\begin{dfn} \label{dfn:Gfamily}
	Let $\Gg = (\Gamma,G)$ be a locally finite nonsingular graph of countable groups.
	For each $e \in \Gamma^1$ choose a \hl{transversal} (a system of distinct representatives) $\Sigma_e$ for $G_{r(e)} / \alpha_e(G_e)$ and suppose that $1_{G_{r(e)}} \in \Sigma_e$. A {\em $\Gg$-family} in a $C^*$-algebra $B$ is a collection of partial isometries
	$\{S_e \mid e \in \Gamma^1\}$ in $B$
	together with representations $U_x \colon G_x \to B$, $g\mapsto U_{x,g}$  of $G_x$ by partial unitaries for each
	$x\in\Gamma^0$ satisfying the relations:
	\begin{enumerate}[label={ (G\arabic*)},align=left]
		\setlength\itemsep{0.7em}
		\item \label{itm:G1} $U_{x,1}U_{y,1}=0$ for each $x,y \in\Gamma^0$ with
		      $x \not= y $;
		\item \label{itm:G2}$U_{r(e),\alpha_e(g)}S_e=S_e U_{s(e),\alpha_{\overline{e}}(g)}$
		      for each $e\in\Gamma^1$ and $g\in G_e$;
		\item \label{itm:G3}
		      $U_{s(e),1}=S_e^*S_e+S_{\overline{e}}S_{\overline{e}}^*$ for each
		      $e\in\Gamma^1$;   and
		\item \label{itm:G4} $\begin{aligned}[t]
				      S_e^*S_e=\sum_{\substack{ r(f)=s(e),\, \mu \in \Sigma_f \\ hf\not=
						      1 \overline{e}}} U_{s(e),\mu}S_fS_f^*U_{s(e),\mu}^*
			      \end{aligned}$ for each
		      $e\in\Gamma^1$.
	\end{enumerate}
\end{dfn}

We note that the relation \ref{itm:G4} is independent of the choice of transversals (see \cite[Remark 3.2]{BMPST17}).
The relations \ref{itm:G3} and \ref{itm:G4} are similar to the Cuntz-Krieger relations of directed graph $C^*$-algebras \cite[p. 6]{Rae05}. In contrast to the Cuntz-Krieger relations, \ref{itm:G3} implies that $S_eS_{\ol{e}} = 0$ despite the fact that $r(e) = s(\ol{e})$. The relation \ref{itm:G2} can be interpreted as intertwining the representations of $G_e$ induced by $\alpha_e$ and $\alpha_{\ol{e}}$.

\begin{ntn}
	If $e \in \Gamma^1$ and $g \in G_{r(e)}$, then we will write
	$
		S_{g e} := U_{r(e),g} S_e.
	$
\end{ntn}

\emph{ For the remainder of this article we will always assume that for a graph of groups $\Gg = (\Gamma,G)$ we have chosen transversals $\Sigma_e$ for $G_{r(e)} / \alpha_e(G_e)$ such that $1_{G_{r(e)}} \in \Sigma_e$. }
We use $\Gg$-families to associate a $C^*$-algebra to a graph of groups.

\begin{dfn}
	Let $\Gg = (\Gamma,G)$ be a locally finite nonsingular graph of countable groups. The {\em graph of groups $C^*$-algebra} $C^*(\Gg)$ is the unique (up to isomorphism) $C^*$-algebra generated by a $\Gg$-family $\{u_{x},s_e \mid x \in \Gamma^0, e \in \Gamma^1\}$ which is universal in the following sense: if $\{U_x , S_e \mid x \in \Gamma^0, e \in \Gamma^1\}$ is a $\Gg$-family in a $C^*$-algebra $B$, then there is a unique $*$-homomorphism from $\Phi \colon C^*(\Gg) \to B$ such that $\Phi(u_x) = U_x$ and $\Phi(s_e) = S_e$.
\end{dfn}

This definition of a graph of groups $C^*$-algebra is analogous to the definition of the full group $C^*$-algebra. We will never consider a ``reduced'' version of the definition.
A concrete $\Gg$-family is constructed in \cite[Remark 3.4]{BMPST17}.
In general, the $C^*$-algebras of graphs of groups are not $C^*$-algebras of directed graphs (see \cite[Example 3.13]{BMPST17}). On the other hand, if $\Gg$ is a locally finite, nonsingular graph of groups with {\em trivial} edge groups, then according to \cite[Theorem 3.6]{BMPST17} there is a  directed graph $E_\Gg$ such that $C^*(E_\Gg) \cong C^*(\Gg)$.
As we shall see in Section~\ref{sec:Ktheory}, if the edge groups are non-trivial then the $K_1$-groups of $C^*(\Gg)$ often have torsion and are therefore not the $C^*$-algebras of directed graphs.

We recall the definition of the boundary $\partial X$ of a tree $X$ from \cite[Definition 2.18]{BMPST17}.
\begin{dfn}
	Let $X$ be a locally finite non-singular tree and fix $x \in X^0$. The \hl{boundary} of $X$ (at $x$) is the collection of infinite paths
	\[
		x\partial X = \{e_1e_2e_3\cdots \mid e_i \in X^1, r(e_1) = x,\, r(e_{i+1}) = s(e_i),\, \text{and } e_i \ne \ol{e_{i+1}} \text{ for all } i \in \NN \}
	\]
	equipped with the topology generated by the cylinder sets
	\[
		\Zz(e_1e_2\cdots e_k) = \{f_1f_2f_3 \cdots \in x \partial X \mid f_i = e_i \text{ for all } 1 \le i \le k \}
	\]
	ranging over all finite paths $e_1 e_2 \cdots e_k \in X^k$ such that $r(e_1) = x$ and $e_i \ne \ol{e_{i+1}}$.
\end{dfn}
The boundary $x \partial X$ is a totally disconnected compact Hausdorff space. Moreover, if $X$ is connected, then up to homeomorphism the boundary is independent of the choice of $x \in X^0$. In this case we simply write $\partial X$ for the boundary.
As noted in \cite[Remark 2.19]{BMPST17}, the above notion of boundary agrees with the Gromov boundary of $X$.

The action of a group on a tree extends naturally to an action by homeomorphisms on the boundary. In particular, the fundamental group $\pi_1(\Gg)$ of a graph of groups $\Gg$ acts naturally on the boundary $ \partial X_\Gg$ of its universal covering tree.

Finally, we mention the main theorem of \cite{BMPST17} which can be interpreted as a $C^*$-algebraic analogue of the Bass-Serre Theorem.
\begin{thm}[{\cite[Theorem 4.1]{BMPST17}}] \label{thm:bmpst17iso}
	Let $\Gg = (\Gamma,G)$ be a locally finite nonsingular graph of countable groups. Then  $C^*(\Gg)$ is isomorphic to $\Kk(\ell^2(\Gamma^0)) \otimes (C(\partial X_\Gg) \rtimes_\tau \pi_1(\Gg))$, where $\tau$ is the action on $C(\partial X_\Gg)$ induced by the action of $\pi_1(\Gg)$ on the universal covering tree $X_\Gg$.
\end{thm}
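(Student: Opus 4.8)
The plan is to prove the isomorphism via the universal property of $C^*(\Gg)$: exhibit a concrete $\Gg$-family that generates $B := \Kk(\ell^2(\Gamma^0)) \otimes (C(\partial X_\Gg) \rtimes_\tau \pi_1(\Gg))$, obtain a $*$-homomorphism $\Phi$ from $C^*(\Gg)$ into $B$, and then establish a uniqueness theorem to show $\Phi$ is an isomorphism. First I would set up coordinates from Bass--Serre theory: fix a lift of a fundamental domain of the $\pi_1(\Gg)$-action on $X_\Gg$ realising $\pi_1(\Gg)\bs X_\Gg = \Gamma$, so that each vertex group $G_x$ is identified with the stabiliser $\stab_{\pi_1(\Gg)}(\tilde x)$ of the chosen lift $\tilde x \in X_\Gg^0$ and hence embeds in $\pi_1(\Gg)$. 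Writing $\theta_{x,y}$ for the matrix units of $\Kk(\ell^2(\Gamma^0))$ and $u_\gamma$ for the canonical unitaries of the crossed product, I would define $U_{x,g} := \theta_{x,x} \otimes u_{\tilde g}$ for $g \in G_x$ (with $\tilde g \in \pi_1(\Gg)$ the image of $g$) and $S_e := \theta_{r(e),s(e)} \otimes T_e$, where $T_e$ is the partial isometry in the crossed product whose source projection is the characteristic function of the cylinder of boundary paths based at $s(e)$ that do not backtrack along $\ol{e}$, twisted by the group element implementing the edge $e$.

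Second, I would verify the relations \ref{itm:G1}--\ref{itm:G4}. Relation \ref{itm:G1} is immediate from orthogonality of the diagonal matrix units; \ref{itm:G2} reduces to the intertwining $u_{\widetilde{\alpha_e(g)}}\, T_e = T_e\, u_{\widetilde{\alpha_{\ol e}(g)}}$, which is exactly the Bass--Serre statement that the edge element conjugates the two boundary embeddings of $G_e$ into one another; and \ref{itm:G3}, \ref{itm:G4} become, after stripping the matrix units, the identities $T_e^* T_e + T_{\ol e} T_{\ol e}^* = 1$ and $T_e^* T_e = \sum U_{s(e),\mu} T_f T_f^* U_{s(e),\mu}^*$ in the crossed product. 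These say precisely that the cylinders at $s(e)$ decompose into the backtracking piece along $\ol e$ and the forward pieces indexed by the outgoing edges $f$ and the cosets $\mu \in \Sigma_f$ of $G_{s(e)}/\alpha_f(G_f)$, i.e. the branching of $X_\Gg$. This is where local finiteness and nonsingularity of $\Gg$ enter, guaranteeing finiteness of the sums and genuine branching at every vertex. The universal property then yields $\Phi \colon C^*(\Gg) \to B$.

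Third, for surjectivity I would check that the concrete family generates $B$: products of the $S_e$ produce every matrix unit $\theta_{x,y}$ (using connectedness of $\Gamma$), the range projections $S_\mu S_\mu^*$ of path isometries produce the cylinder characteristic functions and hence, by Stone--Weierstrass, all of $C(\partial X_\Gg)$, while the $U_{x,g}$ together with the edge words recover the Bass--Serre generators of $\pi_1(\Gg)$ and thus all unitaries $u_\gamma$.

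The hard part will be injectivity, which I would obtain through a gauge-invariant uniqueness theorem. The gauge action $\gamma_z(s_e) = z s_e$, $\gamma_z(u_x) = u_x$ is well defined on $C^*(\Gg)$ since every relation is homogeneous of degree $0$. On $B$ the matching circle action is induced \emph{not} by a homomorphism $\pi_1(\Gg) \to \ZZ$ (which need not exist, e.g. for $PSL_2(\ZZ)$) but by the horospherical level cocycle on the transformation groupoid $\partial X_\Gg \rtimes \pi_1(\Gg)$; one checks $\Phi$ is equivariant. Injectivity then reduces to faithfulness on the fixed-point algebra, the core, which is a direct limit of blocks of the form ``matrix algebra over the group algebra of a stabiliser'', where $\Phi$ is manifestly injective because $\Phi(U_{x,1}) = \theta_{x,x}\otimes 1 \ne 0$ and the representations $g \mapsto u_{\tilde g}$ are faithful. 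The subtlest supporting input is that the boundary action of $\pi_1(\Gg)$ on $\partial X_\Gg$ is amenable, so that the full crossed product of the statement coincides with the reduced one; this is what legitimises using the faithful canonical conditional expectation (equivalently the groupoid gauge machinery) to certify faithfulness on the core and conclude that $\Phi$ is an isomorphism.
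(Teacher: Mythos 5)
First, a point of order: the paper does not prove Theorem \ref{thm:bmpst17iso} at all --- it is quoted verbatim from \cite[Theorem 4.1]{BMPST17} --- so there is no internal proof to compare yours against. Judged on its own terms, your first three steps are the expected ones and are essentially sound: the concrete $\Gg$-family, the verification of \ref{itm:G1}--\ref{itm:G4} via the Bass--Serre relation $t_e\alpha_{\ol{e}}(g)t_e^{-1}=\alpha_e(g)$, and generation (with the caveat that products of your $S_e$ and $U_{x,g}$ only produce elements of the form $\theta_{x,y}\otimes u_\gamma p$ with $p$ a cylinder projection, so recovering the unitaries $u_\gamma$ themselves needs an extra partition-of-unity argument over cylinder sets).

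The genuine gap is your injectivity step. It rests on the claim that the action of $\pi_1(\Gg)$ on $\partial X_\Gg$ is amenable, and under the standing hypotheses (locally finite, nonsingular, countable vertex groups) this is false. Take the loop of groups with $G_v=G_e=F_2$ a free group and $\alpha_e=\alpha_{\ol{e}}=\id$: this is locally finite and nonsingular (exactly as in the $m=n=1$ case of Example \ref{ex:loop}, with $\ZZ$ replaced by $F_2$), the tree $X_\Gg$ is the $2$-regular tree, $\pi_1(\Gg)\cong F_2\times\ZZ$, and $\partial X_\Gg$ consists of two points fixed by the whole group. The trivial action of a nonamenable group is not amenable, and here $C(\partial X_\Gg)\rtimes_{\mathrm{full}}\pi_1(\Gg)\cong\CC^2\otimes C^*_{\mathrm{full}}(F_2\times\ZZ)$ genuinely differs from its reduced counterpart. (Amenability of the boundary action does hold when the edge stabilisers are amenable --- which covers the GBS examples of Sections \ref{sec:Ktheory} and \ref{sec:poincare} --- but the theorem is stated in far greater generality.)

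Moreover this cannot be patched by a better amenability argument, because the theorem concerns the full crossed product and must do so. The gauge-fixed core of $C^*(\Gg)$ contains matrix algebras over the \emph{full} algebras $C^*(G_e)$: this is visible from Theorem \ref{thm:isomorphism}, whose proof is independent of Theorem \ref{thm:bmpst17iso}, since the coefficient algebra $A=\bigoplus_e M_{\Sigma_e}(C^*(G_e))$ embeds in $\Oo_E$ via $i_A$. Inside the \emph{reduced} crossed product, however, the localised copies $\{u_{\tilde g}\,1_Z : g\in G_e\}$ of an edge group generate only $C^*_{\mathrm{red}}(G_e)$: in the regular representation of the reduced crossed product the canonical unitaries restrict on $G_e$ to a multiple of its left regular representation, so by Fell absorption $\|\sum_g c_g u_{\tilde g}1_Z\|$ is dominated by the reduced norm. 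Hence for nonamenable $G_e$ no $*$-homomorphism into $\Kk(\ell^2(\Gamma^0))\otimes(C(\partial X_\Gg)\rtimes_{\mathrm{red}}\pi_1(\Gg))$ can be faithful on the core, and your ``manifestly injective'' claim --- that $\Phi(U_{x,1})\neq 0$ and $g\mapsto u_{\tilde g}$ is injective on group elements --- does not meet the real issue, which is a statement about which $C^*$-norm the image carries. (Note also that the faithfulness of the expectation you actually need, averaging over the circle action, is automatic for any circle action; amenability was never the currency needed to purchase it.) The standard way out, and the reason the statement is formulated with the full crossed product, is to dispense with uniqueness theorems entirely: use the universal property of the full crossed product to build a homomorphism in the reverse direction, by exhibiting a covariant representation of $(C(\partial X_\Gg),\pi_1(\Gg))$ in the multiplier algebra of $\Kk(\ell^2(\Gamma^0))\otimes C^*(\Gg)$ constructed from the universal $\Gg$-family, and then check that the two universal-property-induced maps are mutually inverse on generators --- precisely the two-sided strategy this paper itself uses to prove $\Oo_E\cong C^*(\Gg)$ in Theorem \ref{thm:isomorphism}.
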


\subsection{Group action cocycles} \label{sec:cocycles}
To streamline computations in $C^*(\Gg)$, we introduce group action cocycles. They allow us to ``pass group elements through edges'' in the sense of Lemma~\ref{lem:carrying} below.

\begin{dfn}
	Let $G$ and $H$ be groups and suppose that $G$ acts on a set $X$. An $H$-valued $1$-cocycle for the action of $G$ on $X$ is a map $c \colon G \times X \to H$ such that
	\begin{equation}\label{eq:cocycle}
		c(g_1g_2,x) = c(g_1,g_2\cdot x) c(g_2,x)
	\end{equation}
	for all $g_1,\,g_2\in G$ and $x \in X$.
\end{dfn}
Let $\Gg = (\Gamma,G)$ be a graph of groups and recall that $\Sigma_e \subseteq G_{r(e)}$ is a transversal for the coset space $G_{r(e)} / \alpha_e(G_e)$. For each $e \in \Gamma^1$ the group $G_{r(e)}$ acts canonically by left translation on $G_{r(e)} / \alpha_e(G_e)$, and this induces an action of $G_{r(e)}$ on $\Sigma_e$. There is a $G_e$-valued cocycle $c_{e} \colon G_{r(e)} \times \Sigma_e \to G_e$ for the action of $G_{r(e)}$ on $\Sigma_e$ given by
\begin{equation}\label{eq:cocyclee}
	c_{e}(g,\mu) := \alpha_e^{-1}((g \cdot \mu)^{-1} g \mu)
\end{equation}
for all $g \in G_{r(e)}$ and $\mu \in \Sigma_e$. In the product $(g \cdot \mu)^{-1} g \mu$ we are considering both $\mu$ and $g \cdot \mu$ as elements of $G_{r(e)}$. Then for each $g \in G_{r(e)}$ and $\mu \in \Sigma_e$ we have
\begin{equation}\label{eq:splitting}
	g\mu = (g \cdot \mu) \alpha_e(c_e(g,\mu)).
\end{equation}
In particular, $g = (g \cdot 1) \alpha_e(c_e(g,1))$.

\begin{lem}\label{lem:carrying}
	Let $\Gg = (\Gamma,G)$ be a locally finite nonsingular graph of countable groups and suppose that $\{U_x \mid x \in \Gamma^0\} \cup \{S_e \mid e \in \Gamma^1\}$ is a $\Gg$-family. Then for all $e \in \Gamma^1$, $g \in G_{r(e)}$, and $\mu \in \Sigma_e$ we have,
	\begin{equation}\label{eq:carrying}
		U_{r(e),g} S_{\mu e} = S_{(g \cdot \mu)e} U_{s(e), c_{e}(g,\mu)}.
	\end{equation}

\end{lem}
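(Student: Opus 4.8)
The plan is to reduce the statement to the defining relations of a $\Gg$-family by first unwinding the shorthand $S_{ge} = U_{r(e),g}S_e$, and then substituting the cocycle splitting \eqref{eq:splitting}. So first I would rewrite the left-hand side purely in terms of the generators: since $S_{\mu e} = U_{r(e),\mu}S_e$ by definition of the shorthand, and since $U_{r(e)}$ is a representation of $G_{r(e)}$ (so that $U_{r(e),g}U_{r(e),\mu} = U_{r(e),g\mu}$), I obtain $U_{r(e),g}S_{\mu e} = U_{r(e),g\mu}S_e$.

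The key move is then the splitting \eqref{eq:splitting}, which expresses $g\mu = (g\cdot\mu)\,\alpha_e(c_e(g,\mu))$ with $g\cdot\mu \in \Sigma_e$ and $c_e(g,\mu)\in G_e$. Applying multiplicativity of $U_{r(e)}$ once more gives $U_{r(e),g\mu}S_e = U_{r(e),g\cdot\mu}\,U_{r(e),\alpha_e(c_e(g,\mu))}\,S_e$, which isolates the factor $U_{r(e),\alpha_e(c_e(g,\mu))}S_e$ in a form to which relation \ref{itm:G2} applies directly.

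Next I would push the edge-group element through the edge using \ref{itm:G2}: taking $h = c_e(g,\mu)\in G_e$ yields $U_{r(e),\alpha_e(c_e(g,\mu))}S_e = S_e\,U_{s(e),\alpha_{\ol{e}}(c_e(g,\mu))}$. Substituting this and recognising $U_{r(e),g\cdot\mu}S_e = S_{(g\cdot\mu)e}$ closes the computation and delivers the right-hand side, where the argument $c_e(g,\mu)$ in $U_{s(e),c_e(g,\mu)}$ is to be read as $\alpha_{\ol{e}}(c_e(g,\mu))\in G_{s(e)}$.

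I do not expect a deep obstacle here: the identity is a formal consequence of \ref{itm:G2} together with the cocycle splitting, and the whole proof is a short chain of equalities. The only points genuinely needing care are (i) the two uses of multiplicativity $U_{r(e),g}U_{r(e),h}=U_{r(e),gh}$, which are legitimate precisely because each $U_x$ is a representation of $G_x$ by partial unitaries rather than an arbitrary family; and (ii) the type-matching on the right-hand side, since $c_e(g,\mu)$ a priori lives in $G_e = G_{\ol{e}}$ and only becomes an admissible argument of $U_{s(e)}$ after applying $\alpha_{\ol{e}}$. I would flag (ii) explicitly so that the appearance of $U_{s(e),c_e(g,\mu)}$ in the conclusion is unambiguous.
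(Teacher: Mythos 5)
Your proof is correct and takes exactly the route the paper intends: the paper's entire proof is the remark that the identity ``follows immediately from \eqref{eq:splitting} and \ref{itm:G2}'', and your computation is just that argument written out in full. Your flag (ii) is also the right reading of the statement: since $c_e(g,\mu)\in G_e$, the factor $U_{s(e),c_e(g,\mu)}$ is the paper's (mild) abuse of notation for $U_{s(e),\alpha_{\ol{e}}(c_e(g,\mu))}$, consistent with its usage elsewhere (e.g.\ the shorthand $u_{s(e),\alpha_{\ol{e}}(g)}$ in Section~\ref{sec:cpmodel}).
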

\begin{proof}
	This follows immediately from \eqref{eq:splitting} and \ref{itm:G2}.
\end{proof}

Now suppose that $f \in \Gamma^1$ is such that $s(f) = r(e)$ and $f \ne \ol{e}$. Then the cocycle $c_e$ restricts to a cocycle $c_{fe} \colon G_f \times \Sigma_e \to G_e$ for the induced action of $G_f$ on $\Sigma_e$ via $\alpha_{\ol{f}}$. Explicitly, for $g \in G_f$ and $\mu \in \Sigma_e$ we define
\begin{equation}
	c_{fe} (g,\mu) :=c_e(\alpha_{\ol{f}}(g),\mu).
	\label{eq:cee-eff-ee}
\end{equation}
In the case where $f = \ol{e}$, the set $\Sigma_e \setminus \{1\}$ is invariant under the left action of $G_{\ol{e}} = G_e$ on $\Sigma_e$. In particular, $c_e$ restricts to a cocycle $c_{\ol{e}e} \colon G_{\ol{e}} \times (\Sigma_e \setminus \{1\}) \to G_e$ given by
\[
	c_{\ol{e}e} (g, \mu) = c_e(\alpha_{e}(g),\mu).
\]

\subsection{$C^*$-correspondences and Cuntz-Pimsner algebras}

We  refer to \cite{Lan95} or \cite{RW98} for background on $C^*$-correspondences and their $C^*$-algebras.
Given a right Hilbert $A$-module $E$, we write $\End_A(E)$ for the $C^*$-algebra of adjointable operators on $E$. For $\xi,\,\eta \in E$ we write $\Theta_{\xi,\eta}$ for the rank-one operator defined by $\Theta_{\xi,\eta}(\zeta) = \xi \cdot (\eta \mid \zeta)_A$ for all $\zeta \in E$. The closed 2-sided ideal of generalised compact operators is denoted
\[
	\End_{A}^0(E) := \ol{\spaan} \{\Theta_{\xi,\eta} \mid \xi,\eta \in E\} \triangleleft \End_A(E).
\]
A right Hilbert $A$-module $E$ is said to be \hl{full} if $\ol{(E \mid E)_A} = A$.

\begin{dfn}
	An \hl{$A$--$B$-correspondence} is a pair $(\phi,E)$ consisting of a right Hilbert $B$-module $E$ together with a $*$-homomorphism $\phi \colon A \to \End_B(E)$, which defines a left action of $A$ on $E$. We say that $(\phi,E)$ is \hl{faithful} if $\phi$ is injective and \hl{nondegenerate} if $\ol{\phi(A)E} = E$.

	An $A$--$B$-correspondence $(\phi,E)$ is said to be an \hl{imprimitivity bimodule} if $E$ is full and comes with a left $A$-linear inner product ${}_{A}(\,\cdot \mid \cdot\,)$ making it a full left Hilbert $A$-module which satisfies the imprimitivity relation \[\xi \cdot (\eta \mid \zeta)_B = \phi({}_A(\xi \mid \eta))\zeta\] for all $\xi,\,\eta,\, \zeta \in E$.
\end{dfn}

If $E$ is a right Hilbert $A$-module then we denote by $E^* = \{\xi ^* \mid \xi \in E\}$ the conjugate vector space of $E$. Then $E^*$ becomes a left Hilbert $A$-module with left action $a \cdot \xi^* = (\xi \cdot a^*)^*$ and inner product ${}_A( \xi^* \mid \eta^* ) = (\xi \mid \eta)_A$. If $(\phi,E)$ is an $A$--$B$-imprimitivity bimodule then $E^*$ becomes a $B$--$A$-imprimitivity bimodule such that $E \otimes_B E^* \cong A$ and $E^* \otimes_A E \cong B$~\cite[Proposition~3.28]{RW98}.

\begin{dfn}
	A \hl{representation} of an $A$--$A$-correspondence $(\phi,E)$  in a $C^*$-algebra $B$, denoted $(\pi,\psi) \colon (\phi,E) \to B$, consists of a $*$-homomorphism $\pi \colon A \to B$ together with a linear map $\psi \colon E \to B$ such that:
	\begin{enumerate}[label={ (\roman*)},align=right]
		\setlength\itemsep{0.5em}
		\item $\psi(\xi)\pi(a) = \psi(\xi \cdot a)$ for all $\xi \in E$ and $a \in A$,
		\item $\pi(a) \psi(\xi) = \psi( \phi(a) \xi)$ for all $a \in A$ and $\xi \in E$, and
		\item $\psi(\xi)^*\psi(\eta) = \pi((\xi \mid \eta)_A)$ for all $\xi,\,\eta \in E$.
	\end{enumerate}
	We denote by $C^*(\pi,\psi)$ the $C^*$-algebra generated by $\pi(A) \cup \psi(E)$ in $B$ and call this the \hl{$C^*$-algebra generated by $(\pi,\psi)$.}
\end{dfn}

Pimsner \cite{Pim97} introduced two universal $C^*$-algebras for representations of correspondences. The first is the Toeplitz algebra, introduced next. The second is the Cuntz-Pimsner algebra, corresponding to a restricted class of representations. Subsequent work by Muhly-Solel \cite{MS00} and Katsura \cite{Kat04cor} expanded the class of correspondences to which these algebras can be associated.

\begin{dfn}
	The \hl{Toeplitz algebra} $\Tt_E$ of an $A$--$A$-correspondence $(\phi,E)$ is the unique (up to isomorphism) $C^*$-algebra generated by a representation $(j_A,j_E)$ of $(\phi,E)$ satisfying the following universal property: if $(\pi,\psi) \colon (\phi,E) \to B$ is a representation then there is a unique $*$-homomorphism $\pi \times \psi \colon \Tt_E\to C^*(\pi,\psi)$ such that $(\pi \times \psi) \circ j_A = \pi$ and $(\pi\times \psi ) \circ j_E = \psi$.
\end{dfn}

Now suppose that $(\phi,E)$ is a full, faithful, and nondegenerate $A$--$A$-correspondence, and suppose that $\phi(A) \subseteq \End_A^0(E)$.
Given a representation $(\pi,\psi)\colon (\phi,E) \to B$ there is an induced $*$-homomorphism $\psi^{(1)} \colon \End_B^0(E) \to B$ satisfying $\psi^{(1)}(\Theta_{\xi,\eta}) = \psi(\xi)\psi(\eta)^*$ for all $\xi,\, \eta \in E$. We say that a representation $(\pi, \psi)$ of $(\phi,E)$ is \hl{Cuntz-Pimsner covariant} if $(\psi^{(1)} \circ \phi)(a) = \pi(a)$ for all $a \in A$.

\begin{dfn}
	The \hl{Cuntz-Pimsner algebra} $\Oo_E$ of an  $A$--$A$-correspondence $(\phi,E)$ is the unique (up to isomorphism) $C^*$-algebra generated by a Cuntz-Pimsner covariant representation $(i_A,i_E)$ of $E$ satisfying the following universal property: if $(\pi,\psi) \colon (\phi,E) \to B$ is a Cuntz-Pimsner covariant representation then there is a unique $*$-homomorphism $\pi \times \psi \colon \Oo_E\to C^*(\pi,\psi)$ such that $(\pi \times \psi) \circ i_A = \pi$ and $(\pi \times \psi) \circ i_E = \psi$.
\end{dfn}

We show in Theorem~\ref{thm:isomorphism} that the $C^*$-algebra of a graph of groups $C^*(\Gg)$ can be realised as a Cuntz-Pimsner algebra. In the next section we describe the correspondence that the aforementioned Cuntz-Pimsner algebra is built from.

\section{The graph of groups correspondence}
\label{sec:gog-cozzie}

One standard method of building a Cuntz-Pimsner model for directed graph algebras, \cite[Proposition 3.8]{RRS19}, starts with functions on vertices as the coefficient algebra, and builds a $C^*$-correspondence from functions on edges.
The coefficient algebra for our Cuntz-Pimsner model of $C^*(\Gg)$ is instead the direct sum of matrices over the group $C^*$-algebras of the {\em edge} groups.
In the case that our edge groups are all trivial, our correspondence is akin to the directed graph correspondence for the dual graph, \cite[Corollary 2.6]{Rae05}.

\subsection{The graph of groups correspondence}
We begin by considering a path of length two, $fe \in \Gamma^2$. We associate to the path $fe$ a $C^*(G_f)$--$C^*(G_e)$-correspondence $D_{fe}$. The correspondence $D_{fe}$ is analogous to the discrete case of the subgroup correspondences considered in \cite{KLQ18}; the difference being that we do not assume that $G_f = G_e$.

\begin{dfn}[The algebras]
	For each $e \in \Gamma^1$ let $B_e := C^*(G_e)$, the full group $C^*$-algebra of $G_e$.
	Let $A_e:= M_{\Sigma_e}(B_e)$ denote the $C^*$-algebra of $|\Sigma_e| \times |\Sigma_e|$ matrices over $B_e$.
\end{dfn}

We define a right $B_e$-module $F_e$ as follows.
Consider the $*$-algebra of compactly supported functions $C_c(G_e)$ on $G_e$, equipped with the convolution product. Define a right action of  $ a\in C_c(G_e)$ on $ \xi \in C_c(G_{r(e)})$ by
\begin{equation*}
	(\xi \cdot a)(h) = \sum_{g \in G_{e}} \xi(h \alpha_e(g))\, a(g^{-1}),
\end{equation*}
and define a $C_c(G_e)$-valued inner product on $\xi,\, \eta \in C_c(G_{r(e)})$ by
\begin{equation} \label{eq:preip}
	(\xi \mid \eta)_{C_c(G_e)} (h) := \sum_{g \in G_{r(e)}} \ol{\xi(g)}\,\eta(g \alpha_e(h)) = \sum_{\mu \in \Sigma_e} \sum_{k \in G_e} \ol{\xi(\mu \alpha_e(k))}\,\eta(\mu \alpha_e(kh)).
\end{equation}
A norm on $C_c(G_e)$ is given by $\|\xi\| = \|(\xi \mid \xi)_{C_c(G_e)}\|_{full}^{1/2}$, where $\|\cdot\|_{full}$ denotes the full $C^*$-norm on $C_c(G_e)$. Then $C_c(G_{r(e)})$ can be completed into a right $B_e$-module (see \cite[p. 4]{Lan95} or \cite[Lemma 2.16]{RW98} for details).

\begin{dfn}[An imprimitivity bimodule]
	Let $F_e$ denote the right $B_e$-module given by completing $C_c(G_{r(e)})$ in the norm defined by the inner product \eqref{eq:preip}.
\end{dfn}
Recall that for each $e \in \Gamma^1$ the vertex group $G_{r(e)}$ acts canonically on the set of transversals $\Sigma_e$ for $G_{r(e)}/\alpha_e(G_e)$. As such, we can consider the associated (full) crossed-product $C^*$-algebra $C(\Sigma_e) \rtimes G_{r(e)}$.
We remark that the module $F_e$ is the $C(\Sigma_e) \rtimes G_{r(e)}$--$C^*(G_e)$-imprimitivy bimodule of Green's Imprimitivity Theorem \cite[Theorem C.23]{RW98}. In particular, $F_e$ comes equipped with a left action $\psi \colon C(\Sigma_e) \rtimes G_{r(e)} \to \End_{B_e}(F_e)$ which we spend a moment to describe.

Since $\Gg$ is locally finite, $\Sigma_e$ is a finite set for each $e \in \Gamma^1$. So we may view $C(\Sigma_e) \rtimes G_{r(e)}$ as a completion of $C_c(G_{r(e)} \times \Sigma_e)$, where $C_c(G_{r(e)} \times \Sigma_e)$ is a $*$-algebra under the operations
\[
	(a * b)(g,\mu) = \sum_{h \in G_{r(e)}} a(h,\mu) b(h^{-1}g, h^{-1} \cdot \mu)
	\quad \text{and} \quad
	a^*(g,\mu) = \ol{a(g^{-1},g^{-1} \cdot \mu)}
\]
for $a,\,b \in C_c(G_{r(e)} \times \Sigma_e)$.
The left action $\psi \colon C(\Sigma_e) \rtimes G_{r(e)} \to \End_{B_e}(F_e)$ then satisfies
\begin{equation}\label{eq:leftcrossedprod}
	\psi(a)\xi(h) = \sum_{k \in G_{r(e)}} a(k^{-1},h \cdot 1) \xi(kh)
\end{equation}
for all  $a \in C_c(G_{r(e)} \times \Sigma_e)$ and $\xi \in C_c(G_{r(e)})$,
where $h \cdot 1 \in \Sigma_e$ represents the coset $h \alpha_e(G_e)$. The module $F_e$ is also full and comes equipped with a left $C(\Sigma_e) \rtimes G_{r(e)}$-valued inner product so that $F_e^*$ is a $B_e$--$C(\Sigma_e) \rtimes G_{r(e)}$-correspondence. Although it may not be immediately apparent, $F_e$ is a free
right $B_e$-module.

\begin{lem}\label{lem:FisColumns}
	Let $e \in \Gamma^1$ and consider the right $B_e$-module $\bigoplus_{\mu \in \Sigma_e} B_e$.
	Denote the left action of $A_e \cong \End_A^0(\bigoplus_{\mu \in \Sigma_e} B_e)$ on $\bigoplus_{\mu \in \Sigma_e} B_e$ by $\ell$. Then $(\ell,\bigoplus_{\mu \in \Sigma_e} B_e)$ and $(\psi,F_e)$ are isomorphic $C^*$-correspondences.
\end{lem}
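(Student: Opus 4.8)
The plan is to exhibit an explicit right $B_e$-module isomorphism $\bigoplus_{\mu \in \Sigma_e} B_e \cong F_e$ coming from the coset decomposition $G_{r(e)} = \bigsqcup_{\mu \in \Sigma_e} \mu\,\alpha_e(G_e)$, and then to check that it intertwines the two left actions. For $\xi \in C_c(G_{r(e)})$ and $\mu \in \Sigma_e$, define $\xi_\mu \in C_c(G_e)$ by $\xi_\mu(k) := \xi(\mu\,\alpha_e(k))$. Since every $g \in G_{r(e)}$ is uniquely of the form $g = \mu\,\alpha_e(k)$ with $\mu \in \Sigma_e$ and $k \in G_e$, the assignment $\Phi(\xi) := (\xi_\mu)_{\mu \in \Sigma_e}$ is a linear bijection from $C_c(G_{r(e)})$ onto $\bigoplus_{\mu \in \Sigma_e} C_c(G_e)$, whose inverse reassembles a tuple into a single function on $G_{r(e)}$.

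First I would verify that $\Phi$ respects the right $B_e$-module structure. Substituting $h = \mu\,\alpha_e(k)$ into the formula for the right action and reindexing via $g = k^{-1}l$ inside the convolution shows that $(\xi \cdot a)_\mu = \xi_\mu * a$ for all $a \in C_c(G_e)$; that is, the right action becomes coordinatewise right multiplication in $B_e$. Splitting the sum over $G_{r(e)}$ in \eqref{eq:preip} according to the same coset decomposition gives at once $(\xi \mid \eta)_{C_c(G_e)} = \sum_{\mu \in \Sigma_e} (\xi_\mu \mid \eta_\mu)_{B_e}$, where $(a \mid b)_{B_e} = a^* * b$ is the canonical $B_e$-valued inner product on $B_e$. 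Hence $\Phi$ preserves inner products, and being bijective on these dense subspaces it extends to an isometric isomorphism of right Hilbert $B_e$-modules $F_e \cong \bigoplus_{\mu \in \Sigma_e} B_e$. In particular $F_e$ is free of rank $|\Sigma_e|$.

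It remains to match the left actions. Here I would invoke that $F_e$ is the Green imprimitivity bimodule: the left action $\psi$ of \eqref{eq:leftcrossedprod} realises an isomorphism $\psi \colon C(\Sigma_e) \rtimes G_{r(e)} \xrightarrow{\ \cong\ } \End_{B_e}^0(F_e)$, since on an imprimitivity bimodule the left action is precisely the action by generalised compacts. Conjugating by $\Phi$ then yields an isomorphism $\theta \colon C(\Sigma_e) \rtimes G_{r(e)} \xrightarrow{\ \cong\ } \End_{B_e}^0\bigl(\bigoplus_{\mu \in \Sigma_e} B_e\bigr) = A_e$, $\theta(b) = \Phi\,\psi(b)\,\Phi^{-1}$, and by construction $\Phi\,\psi(b) = \ell(\theta(b))\,\Phi$ for every $b$. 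Thus $(\Phi,\theta)$ is an isomorphism of $C^*$-correspondences carrying $(\psi,F_e)$ to $(\ell,\bigoplus_{\mu \in \Sigma_e} B_e)$, which is the assertion; as a byproduct this recovers the familiar finite-index identification $C(\Sigma_e) \rtimes G_{r(e)} \cong M_{\Sigma_e}(B_e)$. (One could instead avoid the imprimitivity structure and compute the matrix $\theta(a) \in M_{\Sigma_e}(B_e)$ of $\psi(a)$ directly from \eqref{eq:leftcrossedprod} on point masses $a = \delta_{(g,\sigma)}$, but the bookkeeping is heavier.)

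The only genuine obstacle, such as it is, lies in the indexing gymnastics of the middle step: one must check that the convolution and involution conventions on $C_c(G_e)$ are exactly those making $(a \mid b)_{B_e} = a^* * b$, so that the decomposed inner product of \eqref{eq:preip} matches the standard inner product on $\bigoplus_{\mu \in \Sigma_e} B_e$ on the nose rather than up to a twist. Everything else is a formal consequence of Green's imprimitivity theorem together with the standard identification $\End_{B_e}^0\bigl(\bigoplus_{\mu \in \Sigma_e} B_e\bigr) \cong A_e$.
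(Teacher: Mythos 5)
Your proof is correct and takes essentially the same route as the paper's: the same coset-decomposition map $\Phi(\xi)(\mu,k)=\xi(\mu\,\alpha_e(k))$, the same check that it is an isometric isomorphism of right Hilbert $B_e$-modules, and the same appeal to Green's Imprimitivity Theorem (plus finite generation, so that $\End_{B_e}(F_e)=\End_{B_e}^0(F_e)\cong C(\Sigma_e)\rtimes G_{r(e)}$) to identify the left actions. The only difference is cosmetic: you make explicit the conjugation $\theta(b)=\Phi\,\psi(b)\,\Phi^{-1}$ and the intertwining relation $\Phi\,\psi(b)=\ell(\theta(b))\,\Phi$, which the paper compresses into ``the left actions are preserved.''
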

\begin{proof}
	Identify $C_c( \Sigma_e \times G_e)$ with $\bigoplus_{\mu \in \Sigma_E} C_c(G_e)$.
	A straightforward computation shows that the linear map $\Phi \colon C_c(G_{r(e)}) \to C_c(\Sigma_e \times G_e)$ defined by
	\begin{equation}\label{eq:twopicturesofF}
		\Phi(\xi)(\mu,k) = \xi(\mu \alpha_e(k)),\quad \mu\in\Sigma_e,\ k\in G_e,
	\end{equation}
	preserves both inner products and right actions, and therefore extends to an isometric linear map $\Phi\colon F_e \to \bigoplus_{\mu \in \Sigma_e} B_e$ of right $B_e$-modules. Since $\Phi$ sends the point mass at $\mu\alpha_e(k)$ to the point mass at $k$ in the $\mu$-th copy of $B_e$, $\Phi$ is surjective.

	Since $F_e$ is finitely generated, Green's Imprimitivity Theorem \cite[Theorem C.23]{RW98} implies that $\End_{B_e}(F_e) \cong C(\Sigma_e) \rtimes G_{r(e)}$. Consequently,  $C(\Sigma_e) \rtimes G_{r(e)} \cong A_e$, the left actions are preserved, and the $C^*$-correspondences $(\psi,F_e)$ and $(\ell,\bigoplus_{\mu \in \Sigma_e} B_e)$ are isomorphic.
\end{proof}
In the sequel, we frequently identify the correspondences $(\psi,F_e)$ and $(\ell,\bigoplus_{\mu \in \Sigma_e} B_e)$ in the manner described by Lemma~\ref{lem:FisColumns}. Using \eqref{eq:twopicturesofF} to identify $C_c(\Sigma_e \times G_e)$ as a dense subspace of $F_e$, the left action $\psi \colon C(\Sigma_e) \rtimes G_{r(e)} \to \End_{B_e}(F_e)$ of \eqref{eq:leftcrossedprod} satisfies
\begin{equation}\label{eq:intermediateleftaction}
	\psi(a)\xi(\mu,h) = \sum_{k \in G_{r(e)}} a(k^{-1},h \cdot 1)  \xi(k \cdot \mu, c_e(k,\mu) h)\quad \mu\in\Sigma_e,\ h\in G_e,
\end{equation}
for all $a \in C_c(G_{r(e)} \times \Sigma_e)$ and $\xi \in C_c(\Sigma_e \times G_e)$.

Our next goal is to construct a $B_f$--$B_e$-correspondence $D_{fe}$ from $F_e$ by restricting the left action of $C(\Sigma_e) \rtimes G_e$ to an action of $B_f$. In the special case where $f = \ol{e}$, the construction varies slightly, and so we introduce the following notation.
\begin{ntn}
	For each $fe \in \Gamma^2$ let $\Delta_{fe} \subseteq \Sigma_e$ be given by
	\[
		\Delta_{fe} := \begin{cases}
			\varnothing      & \text{ if } f \ne \ol{e} \\
			\{1_{G_{r(e)}}\} & \text{ if } f = \ol{e}.
		\end{cases}
	\]
\end{ntn}

\begin{dfn}
	For each $fe \in \Gamma^2$ define the right Hilbert $B_e$-module
	\[
		D_{fe} :=  \bigoplus_{
			\mu \in \Sigma_e \setminus \Delta_{fe}} B_{e}
	\]
	considered as a direct sum of right Hilbert $B_e$-modules. Note that if $f = \ol{e}$ and $\alpha_e$ is surjective (so $\Sigma_e = \{1_{G_{r(e)}}\}$), then $D_{\ol{e}e} = \{0\}$.
\end{dfn}

\begin{prop} \label{prop:Dfe}
	Let $fe \in \Gamma^2$, and suppose that either $f \ne \ol{e}$, or $\alpha_e$ is not surjective. Let $c_{fe} \colon G_f \times \Sigma_e \to G_e$ be the cocycle for the action
	of $G_f$ on $\Sigma_e$ given by \eqref{eq:cee-eff-ee}.  Then $(\ol{\varphi}_{fe},D_{fe})$ is a $B_f$--$B_e$-correspondence with left action $\ol{\varphi}_{fe} \colon B_f \to \End_{B_e}^0(D_{fe})$ satisfying
	\begin{equation}\label{eq:leftactionD}
		\ol{\varphi}_{fe}(a) \xi(\mu,h) = \sum_{k \in G_f} a(k^{-1}) \xi(\alpha_{\ol{f}}(k) \cdot \mu,c_{fe}(k,\mu)h)
	\end{equation}
	for all $a \in C_c(G_f)$ and $\xi \in C_c((\Sigma_e \setminus \Delta_{fe}) \times G_e)$. Moreover, $\ol{\varphi}_{fe}$ is unital and faithful.
\end{prop}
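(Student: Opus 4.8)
The plan is to recognise the formula \eqref{eq:leftactionD} as the integrated form of a unitary representation of $G_f$ on the Hilbert module $D_{fe}$, and then read off the three assertions one at a time. Writing $B_f = C^*(G_f)$, for each $k \in G_f$ I would define an operator $V_k$ on $D_{fe}$ by setting, for $\xi \in C_c((\Sigma_e \setminus \Delta_{fe}) \times G_e)$,
\[
	V_k\xi(\mu,h) = \xi\bigl(\alpha_{\ol f}(k^{-1}) \cdot \mu,\, c_{fe}(k^{-1},\mu)h\bigr),
\]
which is exactly the value $\ol{\varphi}_{fe}(\delta_k)$ of \eqref{eq:leftactionD} on the point mass $\delta_k$. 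The first thing to check is that $V_k$ really carries $D_{fe}$ into itself, i.e. that $\mu \in \Sigma_e \setminus \Delta_{fe}$ forces $\alpha_{\ol f}(k^{-1})\cdot \mu \in \Sigma_e \setminus \Delta_{fe}$. When $f \ne \ol{e}$ this is vacuous since $\Delta_{fe} = \varnothing$, and when $f = \ol{e}$ it is precisely the stated invariance of $\Sigma_e \setminus \{1\}$ under the action of $G_{\ol e} = G_e$ through $\alpha_e = \alpha_{\ol f}$. (Alternatively, the whole left action can be obtained by restricting the left action $\psi$ of $C(\Sigma_e)\rtimes G_{r(e)}$ from Lemma~\ref{lem:FisColumns} along the canonical homomorphism $B_f \to C(\Sigma_e)\rtimes G_{r(e)}$ and compressing to $D_{fe}$, but the intrinsic description is cleaner for what follows.)

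Next I would verify that $k \mapsto V_k$ is a unitary representation. Each $V_k$ is a ``monomial'' operator: on the finitely many summands indexed by $\Sigma_e \setminus \Delta_{fe}$ it permutes the copies of $B_e$ via $\mu \mapsto \alpha_{\ol f}(k^{-1})\cdot\mu$ and left-translates by an element of $G_e$, hence it preserves the $B_e$-valued inner product and is adjointable with $V_k^* = V_{k^{-1}}$. A short computation gives $V_k V_{k'} = V_{kk'}$, the only nontrivial point being that the second coordinates compose via the cocycle identity \eqref{eq:cocycle} for $c_{fe}$ (this is exactly why \eqref{eq:cee-eff-ee} was arranged to be a genuine cocycle). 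By the universal property of $B_f = C^*(G_f)$, the representation $V$ integrates to a $*$-homomorphism $\ol{\varphi}_{fe} \colon B_f \to \End_{B_e}(D_{fe})$ with $\ol{\varphi}_{fe}(\delta_k) = V_k$, and extending linearly and continuously recovers \eqref{eq:leftactionD}. Since $B_e = C^*(G_e)$ is unital and $D_{fe}$ is a finite direct sum of copies of $B_e$, we have $\End_{B_e}(D_{fe}) = \End_{B_e}^0(D_{fe})$, so $\ol{\varphi}_{fe}$ automatically lands in the compacts. Unitality is immediate: $c_{fe}(1,\mu) = c_e(1,\mu) = 1$ and $\alpha_{\ol f}(1) \cdot \mu = \mu$, whence $V_1 = \id_{D_{fe}}$.

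The main work is faithfulness, and this is the step I expect to be delicate. I would argue directly on the dense $*$-subalgebra. Take $a = \sum_k a_k \delta_k \in C_c(G_f)$ with $\ol{\varphi}_{fe}(a) = 0$, and view $\ol{\varphi}_{fe}(a)$ as a matrix over $B_e$ indexed by $\Sigma_e \setminus \Delta_{fe}$. The standing hypothesis (either $f \ne \ol{e}$, or $f = \ol{e}$ with $\alpha_e$ not surjective) guarantees that $\Sigma_e \setminus \Delta_{fe}$ is nonempty, so fix some $\mu_0$ in it. Since the $G_f$-action through $\alpha_{\ol f}$ preserves $\Sigma_e \setminus \Delta_{fe}$ (as checked above), the column index $\alpha_{\ol f}(k^{-1}) \cdot \mu_0$ again lies in $\Sigma_e \setminus \Delta_{fe}$, so row $\mu_0$ of the matrix records a contribution from every $k \in G_f$. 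The crux is that distinct group elements cannot cancel within a single matrix entry: by the splitting \eqref{eq:splitting} the assignment $g \mapsto (g \cdot \mu_0,\, c_e(g,\mu_0))$ is injective (one recovers $g = (g\cdot\mu_0)\,\alpha_e(c_e(g,\mu_0))\,\mu_0^{-1}$), and since $\alpha_{\ol f}$ is a monomorphism, the pair $\bigl(\alpha_{\ol f}(k^{-1})\cdot\mu_0,\, c_{fe}(k^{-1},\mu_0)\bigr)$ determines $k$. Hence within the entry $(\mu_0,\nu)$ the contributing indices $k$ (those with $\alpha_{\ol f}(k^{-1})\cdot\mu_0 = \nu$) carry pairwise distinct group elements $c_{fe}(k^{-1},\mu_0) \in G_e$, so the vanishing of that entry forces each $a_k = 0$. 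Running over all entries of row $\mu_0$ kills every $a_k$, so $a = 0$ on $C_c(G_f)$, and by continuity $\ol{\varphi}_{fe}$ is injective. The delicate point is exactly this no-cancellation argument, which is where the bijectivity encoded in \eqref{eq:splitting} together with the nonemptiness of $\Sigma_e \setminus \Delta_{fe}$ (i.e. the precise hypothesis excluding the case $f=\ol e$ with $\alpha_e$ surjective) are genuinely needed.
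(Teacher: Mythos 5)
Your route to the formula---defining the unitaries $V_k$ directly, checking the cocycle identity \eqref{eq:cocycle} gives a representation, integrating via the universal property of $C^*(G_f)$, and getting compactness for free from $\End_{B_e}(D_{fe}) = M_{|\Sigma_e\setminus\Delta_{fe}|}(B_e) = \End^0_{B_e}(D_{fe})$---is correct, and is essentially a hands-on linearisation of what the paper does by restricting the crossed-product action $\psi$ of Lemma~\ref{lem:FisColumns}; unitality and the invariance of $\Sigma_e\setminus\Delta_{fe}$ are also handled correctly. The gap is in faithfulness, exactly the step you flagged as delicate. Your no-cancellation computation does prove that $\ol{\varphi}_{fe}$ is injective on the dense $*$-subalgebra $C_c(G_f)$, but the concluding sentence ``so $a=0$ on $C_c(G_f)$, and by continuity $\ol{\varphi}_{fe}$ is injective'' is a non sequitur: injectivity of a $*$-homomorphism on a dense $*$-subalgebra does not pass to the $C^*$-completion. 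The standard counterexample is the integrated left regular representation $\lambda\colon C^*(G)\to\Bb(\ell^2(G))$ of a discrete group: $\lambda$ is injective on $C_c(G)$ for every $G$ (since $\lambda(a)\delta_1=a$), yet $\ker\lambda\neq 0$ whenever $G$ is non-amenable, because $\lambda$ factors through $C^*_r(G)$. Since the proposition allows arbitrary countable groups $G_f$, an argument that only sees the values of $a$ as a function on $G_f$ cannot distinguish the full norm from the reduced one, and so cannot establish faithfulness of a map out of the \emph{full} group $C^*$-algebra.

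What faithfulness really requires is a weak-containment statement: every representation of $G_f$ must be weakly contained in your representation $V$, equivalently $\ol{\varphi}_{fe}$ must be isometric for the full norm. The paper obtains this from two genuinely analytic inputs: Green's imprimitivity theorem \cite[Theorem C.23]{RW98}, which identifies $\End^0_{B_e}(F_e)$ with the \emph{full} crossed product $C(\Sigma_e)\rtimes G_{r(e)}$ (so that the embedding $\pi\colon C^*(G_{r(e)})\to C(\Sigma_e)\rtimes G_{r(e)}$ acts faithfully on $F_e$), and Rieffel's theorem \cite[Proposition 1.2]{Rie74Ind} that for discrete groups the monomorphism $\alpha_{\ol f}\colon G_f\to G_{r(e)}$ induces an \emph{injective} $*$-homomorphism of full group $C^*$-algebras $\iota\colon C^*(G_f)\to C^*(G_{r(e)})$; the composition $\psi\circ\pi\circ\iota$ is then the faithful left action, which one identifies with $\ol{\varphi}_{fe}$ when $f\neq\ol e$. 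Neither ingredient can be replaced by algebraic injectivity on point masses; to repair your proof you must either invoke them (recovering the paper's argument, which you already sketched parenthetically as the ``alternative'' construction of the left action) or prove the corresponding weak containment by hand via induction of representations. Note also that when $f=\ol e$ one restricts to the proper submodule $\bigoplus_{\mu\neq 1}B_e$ of $F_e$, so even granting the above, faithfulness in that case requires an additional word.
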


\begin{proof}
	Since $\Sigma_e$ is finite the universal property of $C^*(G_{r(e)})$ induces an injective $*$-homomorphism $\pi \colon C^*(G_{r(e)}) \to C(\Sigma_e) \rtimes G_{r(e)}$ such that $\pi(a)(g,\mu) = a(g)$ for all $a \in C_c(G_{r(e)})$.
	As $G_{r(e)}$ is discrete \cite[Proposition 1.2]{Rie74Ind} implies that the inclusion $C_c(G_f) \subseteq C_c(G_{r(e)})$---induced by $\alpha_{\ol{f}}\colon G_f \to G_{r(e)}$\,---extends to an injective $*$-homomorphism $\iota \colon B_f \to C^*(G_{r(e)})$.
	Hence, the composition $\psi \circ \pi \circ \iota$ defines a left action of $C^*(G_f)$ on $F_e$ by compact operators. Moreover, this action is unital and faithful. It follows from \eqref{eq:intermediateleftaction} and the definitions of $\pi$ and $\iota$ that $\psi \circ \pi \circ \iota = \ol{\varphi_{fe}}$ when $f \ne \ol{e}$.
	If $f = \ol{e}$, then the discussion in Section~\ref{sec:cocycles} implies that the action of $G_e$ on $\Sigma_e$ restricts to an action on $\Sigma_e \setminus \{1\}$. In particular, the action $\psi \circ \pi \circ \iota$ on $F_e$ restricts to an action on $\bigoplus_{\mu \in \Sigma_e \setminus \{1\}} B_e$ satisfying \eqref{eq:leftactionD}.
\end{proof}

When $f = \ol{e}$ the copy of $C^*(G_e)$ corresponding to the identity coset is removed from $F_e$ in order take care of the orthogonality of $S_e^*S_e$ and $S_{\ol{e}}S_{\ol{e}}^*$ introduced by condition \ref{itm:G3}. This will become more apparent later. Conceptually, we have passed to the dual graph in order to accommodate this orthogonality.

We assemble the $C^*$-correspondences $(\ol{\varphi}_{fe},D_{fe})$ into a single $C^*$-correspondence.
\begin{dfn}[The graph of groups module]
	Let
	\[
		B := \bigoplus_{e \in \Gamma^1} B_e \quad \text{and} \quad D := \bigoplus_{fe \in \Gamma^2} D_{fe}.
	\]
	We remind the reader that for each $e \in \Gamma^1$ we also have $\ol{e} \in \Gamma^1$.
	Here we think of $D_{fe}$ as a right Hilbert $B$-module, with $B_l$ acting trivially on $D_{fe}$ unless $l = e$. So $D$ is a direct sum of right Hilbert $B$-modules. We call $D$ the \hl{graph of groups module associated to $\Gg$}.
\end{dfn}

\begin{prop}\label{prop:D}
	Let $\Gg = (\Gamma,G)$ be a locally finite nonsingular graph of countable groups. Extend $\ol{\varphi}_{fe}$ to a left action $\ol{\varphi}_{fe} \colon B \to \End_{B}(D_{fe})$, where $\ol{\varphi}_{fe}(B_l) = \{0\}$ unless $l=f$.
	Then $D$ is full and $(\ol{\varphi}:=\oplus_{fe \in \Gamma^2}\ol{\varphi}_{fe}, D)$ is a $B$--$B$-correspondence with a faithful, non-degenerate left action satisfying $\ol{\varphi}(B) \subseteq \End_B^0(D)$.
\end{prop}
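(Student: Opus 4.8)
The plan is to verify each asserted property of $(\ol{\varphi},D)$ in turn, reducing everything to the corresponding statement for the individual summands $(\ol{\varphi}_{fe},D_{fe})$ established in Proposition~\ref{prop:Dfe}, and to pinpoint where local finiteness and nonsingularity are used.

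First I would check that $\ol{\varphi}$ is a well-defined $*$-homomorphism. For $b=(b_l)_{l\in\Gamma^1}\in B$ the operator $\ol{\varphi}(b)=\bigoplus_{fe}\ol{\varphi}_{fe}(b_f)$ acts as $\ol{\varphi}_{fe}(b_f)$ on the summand $D_{fe}$. Since each restriction $\ol{\varphi}_{fe}\colon B_f\to\End_{B_e}^0(D_{fe})$ is a faithful $*$-homomorphism it is isometric, so $\sup_{fe}\|\ol{\varphi}_{fe}(b_f)\|\le\sup_l\|b_l\|=\|b\|$; hence the orthogonal direct sum defines a bounded adjointable operator on $D$ and $\ol{\varphi}$ is a $*$-homomorphism. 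Nondegeneracy is then immediate: each $\ol{\varphi}_{fe}$ is unital by Proposition~\ref{prop:Dfe} (recall that $B_f=C^*(G_f)$ is unital), so $\ol{\varphi}$ acts as the identity on $D_{fe}$ through the unit of $B_f$, and since the algebraic direct sum of the $D_{fe}$ is dense in $D$ we obtain $\ol{\ol{\varphi}(B)D}=D$.

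Next I would treat the compactness condition $\ol{\varphi}(B)\subseteq\End_B^0(D)$, which I expect to be the main obstacle, because $D$ is an infinite direct sum and an infinite orthogonal sum of compact operators need not be compact. The resolution uses local finiteness. For a fixed edge $l$, the element of $B$ supported at $l$ acts nontrivially only on the summands $D_{le}$ with $le\in\Gamma^2$, that is, with $e\in r^{-1}(s(l))$; local finiteness makes this index set finite, so the action is a \emph{finite} orthogonal sum of the compact operators supplied by Proposition~\ref{prop:Dfe} and is therefore compact. A general $b\in B$ is a norm limit of finitely supported elements, each of which acts compactly by the previous sentence, and since $\End_B^0(D)$ is norm closed and $\ol{\varphi}$ is contractive, $\ol{\varphi}(b)\in\End_B^0(D)$.

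Finally I would verify faithfulness and fullness, and this is where nonsingularity enters through a single dichotomy. For faithfulness, given $b$ with some $b_l\neq 0$, I need one nonzero summand $D_{le}$, on which faithfulness of $\ol{\varphi}_{le}$ forces $\ol{\varphi}_{le}(b_l)\neq 0$: if $r^{-1}(s(l))\neq\{\ol{l}\}$ one may pick $e\neq\ol{l}$, giving $\Delta_{le}=\varnothing$ and $D_{le}\neq\{0\}$; otherwise $r^{-1}(r(\ol{l}))=\{\ol{l}\}$ and nonsingularity forces $\alpha_{\ol{l}}$ to be non-surjective, so $D_{l\ol{l}}=\bigoplus_{\mu\in\Sigma_{\ol{l}}\setminus\{1\}}B_{\ol{l}}\neq\{0\}$. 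For fullness it suffices that for each $e$ the inner products of some $D_{fe}$ be dense in $B_e$, and the free module $\bigoplus_{\mu\in\Sigma_e\setminus\Delta_{fe}}B_e$ is full over $B_e$ precisely when $\Sigma_e\setminus\Delta_{fe}\neq\varnothing$. The same dichotomy---taking a neighbour $f\neq\ol{e}$ when $r^{-1}(r(e))\neq\{e\}$, and invoking nonsingularity of $\alpha_e$ when $r^{-1}(r(e))=\{e\}$---produces such an $f$; since distinct summands $D_{fe}$ are mutually orthogonal and their inner products land in distinct $B_e$, summing over $e\in\Gamma^1$ yields $\ol{(D\mid D)_B}=B$.
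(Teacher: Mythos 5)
Your proof is correct and follows essentially the same route as the paper: reduce every property to Proposition~\ref{prop:Dfe} summand by summand, use local finiteness of $\Gamma$ to make the compactness argument a finite sum plus norm-density, and invoke nonsingularity via exactly the dichotomy (a second edge at the vertex, or non-surjectivity of $\alpha_e$) that the paper states tersely for faithfulness and fullness. The only differences are cosmetic: you verify fullness of each summand $\bigoplus_{\mu\in\Sigma_e\setminus\Delta_{fe}}B_e$ directly rather than citing Lemma~\ref{lem:Eidentify} as the paper does, and you make the boundedness of the block-diagonal operator explicit (where note that contractivity of each $*$-homomorphism $\ol{\varphi}_{fe}$ suffices; the isometry claim fails for the zero summands $D_{\ol{e}e}=\{0\}$ but is not needed).
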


\begin{proof}
	Nonsingularity of $\Gg$ guarantees that for each edge $e \in \Gamma^1$ there exists some $f \in \Gamma^1$ with $s(f) = r(e)$ and $D_{fe} \ne \{0\}$. Since each summand $D_{fe}$ of $D$ is full as a right Hilbert $B_e$-module by Lemma~\ref{lem:Eidentify}, it follows that $D$ is full as a right Hilbert $B$-module. Nonsingularity of $\Gg$ also implies that for each $f \in \Gamma^1$ there always exists $e \in \Gamma^1$ such that either $f \ne \ol{e}$ or such that $\alpha_e$ is not surjective.
	It now follows from Proposition~\ref{prop:Dfe}  that $\ol{\varphi}_{fe}$ is injective for each ${fe}\in \Gamma^2$, and so $\ol{\varphi}$ is also injective.
	Since each $\ol{\varphi}_{fe}\colon B_f \to \End_{B_e} (D_{fe})$ is unital  $\ol{\varphi}$ is non-degenerate.

	To see that $\ol{\varphi}(B) \subseteq \End_B^0(D)$, recall from Proposition~\ref{prop:Dfe} that $\ol{\varphi}_{fe}(B_f) \subseteq \End_{B_e}^0(D_{fe})$. Since $\Gg$ is locally finite, for each $f \in \Gamma^1$ and $a \in B_f$ we have $\ol{\varphi}(a) = \sum_{r(e)=s(f)} \ol{\varphi}_{fe}(a)$, which belongs to $\bigoplus_{r(e)=s(f)} \End_{B}^0(D_{fe}) \subseteq \End_{B}^0(D)$. In general, each $a \in B$ can be approximated in norm by a finite sum of $a_f \in B_f$, so it follows that $\ol{\varphi}(a) \in \End_B^0(D)$.
\end{proof}

\begin{dfn}[The graph of groups correspondence]
	We call $(\ol{\varphi}, D)$ of Proposition~\ref{prop:D} the \hl{graph of groups correspondence associated to} $\Gg$.
\end{dfn}

\subsection{The amplified graph of groups correspondence}

The graph of groups correspondence $D$ proves to be very useful for the $K$-theory calculations in Section~\ref{sec:Ktheory} and will be analysed further in Section~\ref{sec:ExelPardo}, but to facilitate the construction of a Cuntz-Pimsner model
of  $C^*(\Gg)$, we pass to a Morita equivalent correspondence, which we call the \hl{amplified graph of groups correspondence}. By \cite{MS00}, the associated Cuntz-Pimsner algebras will also be Morita equivalent.
To begin we amplify $D_{fe}$ to an $A_f$--$A_e$-correspondence.
Recall that $F_e^*$ is a $B_e$--$A_e$-correspondence.
\begin{dfn}[Amplified correspondences]
	\label{defn:A-ee--E-ee}
	For each $fe \in \Gamma^2$ define a right $A_e$-module $E_{fe}$ as the balanced tensor product,
	\[
		E_{fe} := F_f \otimes_{B_f} D_{fe} \otimes_{B_e} F_e^*.
	\]
	Let $\varphi_{fe} \colon A_f \to \End_{A_e}^0(E_{fe})$ denote the left action induced by the left action of $A_f \cong \End_{B_f}^0(F_f)$ on $F_f$. Then $(\varphi_{fe},E_{fe})$ is an $A_f$--$A_e$-correspondence.
\end{dfn}
The graph of groups correspondence is now built in a similar manner to $(\ol{\varphi},D)$.

\begin{dfn}[The amplified graph of groups correspondence]
	\label{defn:AandE}
	Define
	\[
		A := \bigoplus_{e \in \Gamma^1} A_e = \bigoplus_{e \in \Gamma^1} M_{\Sigma_e} (C^*(G_e))
		\quad \text{and} \quad
		E := \bigoplus_{fe \in \Gamma^2} E_{fe}.
	\]
	Then $E$ is a right Hilbert $A$-module with right action analogous to that of $B$ on $D$. A left action $\varphi \colon A \to \End_{A}(E)$ is given by $\varphi = \oplus_{fe \in \Gamma^2} \varphi_{fe}$. We call the $A$--$A$-correspondence $(\varphi,E)$ the \hl{amplified graph of groups correspondence associated to} $\Gg$.
\end{dfn}

\begin{prop}
	Let $\Gg  = (\Gamma,G)$ be a locally finite nonsingular graph of countable groups. Then $E$ is full and the left action $\varphi\colon A \to \End_A(E)$ is faithful, non-degenerate, and by compact operators.
\end{prop}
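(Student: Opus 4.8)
The plan is to recognise $(\varphi,E)$ as the Morita transform of the graph of groups correspondence $(\ol{\varphi},D)$ of Proposition~\ref{prop:D} by the imprimitivity bimodule $F := \bigoplus_{e \in \Gamma^1} F_e$, and then to transport each of the four required properties across this transform. By Lemma~\ref{lem:FisColumns} each $F_e$ is an $A_e$--$B_e$ imprimitivity bimodule, so $F$ is an $A$--$B$ imprimitivity bimodule with inverse $F^* = \bigoplus_{e} F_e^*$. Unravelling Definition~\ref{defn:A-ee--E-ee} and using that the left action $\ol{\varphi}_{fe}$ is unital (Proposition~\ref{prop:Dfe}) to discard the vanishing cross terms, one checks that the summand-by-summand identification assembles to an isomorphism $E \cong F \otimes_B D \otimes_B F^*$ of right Hilbert $A$-modules under which $\varphi$ becomes the left action induced by $A \cong \End_B^0(F)$ on the leftmost factor. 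It therefore suffices to verify that fullness, faithfulness, nondegeneracy, and compactness of the action pass from $(\ol{\varphi},D)$ to $F \otimes_B D \otimes_B F^*$.

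First I would establish fullness by computing the right $A$-valued inner product in two Morita steps. Since $F$ is full and the left $B$-action on $D \otimes_B F^*$ is nondegenerate, $\overline{(E \mid E)_A}$ collapses to $\overline{(D \otimes_B F^* \mid D \otimes_B F^*)_A}$; expanding this inner product as $(F^* \mid (D \mid D)_B \cdot F^*)_A$ and using fullness of $D$ (Proposition~\ref{prop:D}) together with fullness of $F^*$ shows it is dense in $A$. Nondegeneracy is immediate: each $A_e = M_{\Sigma_e}(C^*(G_e))$ is unital and $F_e$ is finitely generated, so $A \cong \End_B^0(F)$ acts unitally on $F$, whence $\varphi(1_{A_f})$ restricts to the identity on each $E_{fe}$ and $\overline{\varphi(A)E} = E$.

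For faithfulness and compactness I would invoke the standard tensoring lemmas for imprimitivity bimodules \cite[Ch.~3]{RW98}. Because $F_e^*$ is an imprimitivity bimodule, the map $T \mapsto T \otimes 1_{F_e^*}$ is an injective $*$-homomorphism $\End_{B_e}^0(D_{fe}) \to \End_{A_e}^0(D_{fe} \otimes_{B_e} F_e^*)$ landing in the compacts; combined with Proposition~\ref{prop:Dfe}, this shows the left $B$-action on $D \otimes_B F^*$ is faithful and by compact operators. The Rieffel correspondence for the imprimitivity bimodule $F$ then identifies $\ker \varphi$ with the kernel of this left $B$-action, giving faithfulness, while the analogous tensoring step for $F$ (again using that $F$ is finitely generated) shows $\varphi(a) = a \otimes 1$ is compact whenever the left $B$-action on $D \otimes_B F^*$ is. Finally, exactly as in Proposition~\ref{prop:D}, local finiteness of $\Gg$ makes each $\varphi(a)$ a finite sum of the $\varphi_{fe}(a)$ so that compactness survives the assembly, and nonsingularity guarantees a nonzero summand $E_{fe}$ for every $f \in \Gamma^1$ so that $\varphi$ restricted to each $A_f$ is nonzero, hence faithful.

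The main obstacle I expect is keeping the two Morita moves---tensoring on the right by $F^*$ and acting on the left through $\End_B^0(F)$---cleanly separated, since in general neither tensoring a faithful action with an identity nor tensoring a compact action with an identity preserves the property. Both steps genuinely use that $F_e$ and $F_e^*$ are imprimitivity bimodules and that $\Sigma_e$ is finite, so that $F_e$ is finitely generated and $\End_{B_e}^0(F_e) = \End_{B_e}(F_e) = A_e$ is unital; the care lies in applying the right-hand move to the properties of $D_{fe}$ and the left-hand move to promote them to $E_{fe}$, and only then summing over $\Gamma^2$.
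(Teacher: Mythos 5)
Your proposal is correct and takes essentially the same approach as the paper: the paper's entire proof is the one-line remark that the result ``follows from Proposition~\ref{prop:D} and the fact that each $F_e$ is a finitely generated $A_e$--$B_e$-imprimitivity bimodule,'' and your argument is precisely a detailed unwinding of that remark, transporting fullness, faithfulness, nondegeneracy, and compactness across the identification $E \cong F \ox_B D \ox_B F^*$. Your attention to where the imprimitivity structure and finite generation of the $F_e$ are genuinely needed (so that tensoring with the identity preserves compactness and injectivity) is exactly the content the paper leaves implicit.
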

\begin{proof}
	This follows from Proposition~\ref{prop:D} and the fact that each $F_e$ is a finitely generated $A_e$--$B_e$-imprimitivity bimodule.
\end{proof}

Passing to the amplified graph of groups correspondence induces a Morita equivalence on the associated Cuntz-Pimsner algebras.
\begin{prop}\label{prop:DandEmoritaeq}
	Let $\Gg  = (\Gamma,G)$ be a locally finite nonsingular graph of countable groups. Then $\Oo_E$ is Morita equivalent to $\Oo_D$.
\end{prop}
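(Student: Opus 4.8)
The plan is to exhibit the amplified correspondence $(\varphi,E)$ as the conjugate of $(\ol{\varphi},D)$ by a single imprimitivity bimodule, and then invoke the Muhly--Solel machinery \cite{MS00}, which guarantees that Morita equivalent $C^*$-correspondences have Morita equivalent Cuntz--Pimsner algebras. Concretely, I would assemble the individual bimodules $F_e$ into one $A$--$B$-imprimitivity bimodule $F := \bigoplus_{e \in \Gamma^1} F_e$ and prove the correspondence isomorphism
\[
	E \cong F \otimes_B D \otimes_B F^*
\]
as $A$--$A$-correspondences. Since $F$ is an imprimitivity bimodule, this identity says precisely that $D$ and $E$ are Morita equivalent correspondences, whence $\Oo_D$ and $\Oo_E$ are Morita equivalent.

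First I would verify that $F = \bigoplus_{e} F_e$ is genuinely an $A$--$B$-imprimitivity bimodule. Fullness over $B = \bigoplus_e B_e$ is immediate from fullness of each $F_e$ over $B_e$. For the compact operators, the crucial observation is that any rank-one operator $\Theta_{\xi,\eta}$ with $\xi \in F_e$ and $\eta \in F_{e'}$, $e \ne e'$, vanishes, because its range $\xi \cdot (\eta \mid \zeta)_B$ is zero: the inner product $(\eta \mid \zeta)_B$ lands in $B_{e'}$, on which $F_e$ carries no right action. Hence $\End_B^0(F) = \bigoplus_e \End_{B_e}^0(F_e) \cong \bigoplus_e A_e = A$, and the imprimitivity relation, holding on each summand $F_e$, holds on $F$.

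Next I would establish the correspondence isomorphism, where the only mechanism at play is index matching under balanced tensor products over $B$. As a right Hilbert $B$-module, $F_l$ is concentrated on the $B_l$-summand, while $D_{fe}$ has left $B$-action via $B_f$ and right $B$-action via $B_e$; consequently $F_l \otimes_B D_{fe}$ is nonzero only when $l = f$, and tensoring on the right with $F_m^*$ (whose left $B$-action is via $B_m$) survives only when $m = e$. Collecting terms yields
\[
	F \otimes_B D \otimes_B F^* \cong \bigoplus_{fe \in \Gamma^2} F_f \otimes_{B_f} D_{fe} \otimes_{B_e} F_e^* = \bigoplus_{fe \in \Gamma^2} E_{fe} = E,
\]
and I would check that the left $A$-action transported from the left action of $A_f$ on $F_f$ agrees with $\varphi_{fe}$ of Definition~\ref{defn:A-ee--E-ee}.

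Finally, since $F^* \otimes_A F \cong B$ and $F \otimes_B F^* \cong A$, the displayed isomorphism gives $F^* \otimes_A E \otimes_A F \cong D$, so $(\ol{\varphi},D)$ and $(\varphi,E)$ are Morita equivalent correspondences in the sense of \cite{MS00}, and the Morita equivalence of $\Oo_D$ and $\Oo_E$ follows at once. The main obstacle is bookkeeping rather than conceptual: one must track carefully how the two left and right $B$-module structures on the summands interact so that the tensor products collapse to exactly the summands $E_{fe}$, and confirm that both the left actions and the inner products are preserved under the identification. Once the correspondence isomorphism is in hand, the conclusion is a direct citation.
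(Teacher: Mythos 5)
Your proposal is correct and follows essentially the same route as the paper: the paper's proof also sets $F=\bigoplus_{e\in\Gamma^1}F_e$, identifies $E\cong F\otimes_B D\otimes_B F^*$ via the collapse of the balanced tensor products over the direct sums, and cites \cite{MS00} to pass the Morita equivalence of correspondences to the Cuntz--Pimsner algebras. The only difference is that you spell out the verification that $F$ is an $A$--$B$-imprimitivity bimodule and the index-matching bookkeeping, which the paper leaves implicit.
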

\begin{proof}
	Let $F = \bigoplus_{e \in \Gamma^1} F_e$.
	Using the (completed) direct sum of $C^*$-modules, we have
	\[
		E
		\cong \Big(\bigoplus_{f \in \Gamma^1} F_f\Big) \ox_{B} \Big(\bigoplus_{fe \in \Gamma^2} D_{fe}\Big) \ox_{B} \Big(\bigoplus_{e \in \Gamma^1} F_e^*\Big)
		=F \ox_B D \ox_B F^*.
	\]
	The Morita equivalence of $D$ and $E$ then passes to a Morita equivalence of $\Oo_D$ and $\Oo_{E}$, by \cite{MS00}.
\end{proof}

We now give a second description of the module $(\varphi_{fe},E_{fe})$ in terms of the conjugate module $F_e^*$ which, for computational purposes, has the advantage of not being built from balanced tensor products.
For each $e \in \Gamma^1$ let  $\Aa_e := C_c(\Sigma_e \times G_e \times \Sigma_e)$. We think of $\Aa_e$ as a dense $*$-subalgebra of $A_e$ with multiplication defined for each $a,\, b \in \Aa_e$ by
\[
	(ab)(\mu,h,\nu) = \sum_{\substack{\sigma \in \Sigma_e\\k \in G_e}} a(\mu,k,\sigma)b(\sigma,k^{-1}h,\nu).
\]
\begin{rmk}
	If $\Sigma_e \times G_e \times \Sigma_e$ is considered as a groupoid with product $(\mu, g, \nu) \cdot (\nu,h,\sigma) = (\mu, gh, \sigma)$, then $A_e$ is the full groupoid $C^*$-algebra of $\Sigma_e \times G_e \times \Sigma_e$, and the product defined above is just the convolution product on $\Aa_e$.
\end{rmk}

Now suppose that $fe \in \Gamma^2$ and consider the right Hilbert $A_e$-module
\[
	(F_e^*)^{\Sigma_f \times (\Sigma_e \setminus \Delta_{fe})} := \bigoplus_{\mu \in \Sigma_f} \bigoplus_{\nu \in \Sigma_e \setminus \Delta_{fe}} F_e^*
\]
with the standard right $A_e$-valued inner product and coordinate-wise right action of $A_e$. To keep track of the conjugate module structure we identify $\bigoplus_{\mu \in \Sigma_f} C_c(G_f) \subseteq F_f$ with
$C_c(\Sigma_f\times G_f)$, and $\bigoplus_{\mu \in \Sigma_e} C_c(G_e) \subseteq F_e^*$ with $  C_c(G_e \times \Sigma_e)$.
Let $\Ee_{fe}:=C_c(\Sigma_f\times (\Sigma_e \setminus \Delta_{fe})\times G_e \times \Sigma_e)$ which we consider as a dense subspace of $(F_e^*)^{\Sigma_f \times (\Sigma_e \setminus \Delta_{fe})}$.

\begin{lem}\label{lem:Eidentify}
	Let $\Gg  = (\Gamma,G)$ be a locally finite nonsingular graph of countable groups. Then for each $fe \in \Gamma^2$ the modules $E_{fe}$ and $(F_e^*)^{\Sigma_f \times (\Sigma_e \setminus \Delta_{fe})}$ are isomorphic as right Hilbert $A_e$-modules. In addition, if either $f \ne \ol{e}$ or $\alpha_{e}$ is not surjective, then there is a faithful non-degenerate left action $\psi_{fe}\colon A_f \to \End_{A_e} ((F_e^*)^{\Sigma_f \times (\Sigma_e \setminus \Delta_{fe})})$ satisfying
	\begin{equation}\label{eq:leftaction}
		(\psi_{fe}(a)\xi) (\mu,\nu,h,\sigma) = \sum_{\substack{ k \in G_{f} \\\rho \in \Sigma_f}} a(\mu,k^{-1},\rho)\, \xi(\rho, k \cdot \nu, c_{fe}(k,\nu)h,\sigma)
	\end{equation}
	for all $a \in \Aa_e$ and $\xi \in \Ee_{fe}$, which makes
	$(\varphi_{fe},E_{fe})$ and $(\psi_{fe},(F_e^*)^{\Sigma_f \times (\Sigma_e \setminus \Delta_{fe})})$ isomorphic as $A_f$--$A_e$-correspondences.
\end{lem}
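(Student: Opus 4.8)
The plan is to compute the two balanced tensor products defining $E_{fe}$ explicitly, using the column-module picture of Lemma~\ref{lem:FisColumns}, and then to transport the left $A_f$-action through these identifications. First I would establish the right Hilbert $A_e$-module isomorphism. Since tensoring commutes with (completed) direct sums and $B_e \ox_{B_e} F_e^* \cong F_e^*$, the definition $D_{fe} = \bigoplus_{\nu \in \Sigma_e \setminus \Delta_{fe}} B_e$ gives
\[
	D_{fe} \ox_{B_e} F_e^*
	\cong \bigoplus_{\nu \in \Sigma_e \setminus \Delta_{fe}} \big(B_e \ox_{B_e} F_e^*\big)
	\cong \bigoplus_{\nu \in \Sigma_e \setminus \Delta_{fe}} F_e^*
\]
as right Hilbert $A_e$-modules, carrying a left $B_f$-action via $\ol{\varphi}_{fe}$. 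Applying Lemma~\ref{lem:FisColumns} to write $F_f \cong \bigoplus_{\rho \in \Sigma_f} B_f$ and using $B_f \ox_{B_f} M \cong M$ then yields
\[
	E_{fe} = F_f \ox_{B_f} \big(D_{fe} \ox_{B_e} F_e^*\big)
	\cong \bigoplus_{\rho \in \Sigma_f}\bigoplus_{\nu \in \Sigma_e \setminus \Delta_{fe}} F_e^*
	= (F_e^*)^{\Sigma_f \times (\Sigma_e \setminus \Delta_{fe})}.
\]
The right $A_e$-action sits on the rightmost factor throughout, so it is preserved; and using \eqref{eq:twopicturesofF} to realise $\bigoplus_{\rho} C_c(G_f) \subseteq F_f$ as $C_c(\Sigma_f \times G_f)$ and the copies of $F_e^*$ as $C_c(G_e \times \Sigma_e)$, the image of a simple tensor becomes a function on $\Sigma_f \times (\Sigma_e \setminus \Delta_{fe}) \times G_e \times \Sigma_e$, i.e.\ an element of $\Ee_{fe}$, on which I would check that the inner products agree.

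For the left action, I would use that $\varphi_{fe}$ is induced from the left $A_f$-action on $F_f$, which in the column picture of Lemma~\ref{lem:FisColumns} is ordinary matrix multiplication: an element $a \in \Aa_f = C_c(\Sigma_f \times G_f \times \Sigma_f)$ acts by convolution in the $G_f$-variable while mixing the index $\rho \in \Sigma_f$ with the output index $\mu \in \Sigma_f$. Because the first tensor product is balanced over $B_f$, the group elements $k \in G_f$ produced by this convolution can be pushed onto the $D_{fe}$-factor, where by \eqref{eq:leftactionD} such a $k$ sends the index $\nu$ to $\alpha_{\ol{f}}(k) \cdot \nu = k \cdot \nu$ and multiplies the $G_e$-variable by the cocycle value $c_{fe}(k,\nu)$. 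Assembling these contributions on the dense subspace $\Ee_{fe}$ should produce exactly \eqref{eq:leftaction}, once the convolution conventions (which account for the $k^{-1}$ in $a(\mu,k^{-1},\rho)$) are matched.

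Finally, faithfulness and non-degeneracy of $\psi_{fe}$ follow formally: $F_f \ox_{B_f} -$ and $- \ox_{B_e} F_e^*$ are Morita-equivalence functors, since $F_f$ and $F_e$ are imprimitivity bimodules, so they preserve faithfulness and non-degeneracy of the left action, and $\ol{\varphi}_{fe}$ is unital and faithful by Proposition~\ref{prop:Dfe}. In the special case $f = \ol{e}$ the same computation goes through because $\Sigma_e \setminus \{1\}$ is invariant under the induced $G_e$-action, as recorded in Section~\ref{sec:cocycles}, so the restricted action on $D_{\ol{e}e} = \bigoplus_{\nu \in \Sigma_e \setminus \{1\}} B_e$ is well defined and still satisfies \eqref{eq:leftactionD}. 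I expect the main obstacle to be the convolution-and-cocycle bookkeeping in the middle paragraph: verifying that the $G_f$-elements generated by the matrix action on $F_f$ migrate correctly across the $B_f$-balanced tensor and that the resulting index $k \cdot \nu$ and cocycle argument $c_{fe}(k,\nu)$ land in the precise slots of \eqref{eq:leftaction}. The module isomorphism and the inner-product matching on dense subspaces are, by contrast, routine.
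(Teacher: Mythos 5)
Your proposal is correct, but it takes a genuinely different route from the paper's proof. The paper proceeds by brute force: it writes down an explicit map $\Phi$ on simple tensors of compactly supported functions, namely $\Phi(a \ox \xi \ox b)(\mu,\sigma,h,\nu) = \sum_{g \in G_f}\sum_{k \in G_e} a(\mu,g^{-1})\,\xi(g\cdot\sigma, c_{fe}(g,\sigma)k)\, b(k^{-1}h,\nu)$, proves it is isometric by a long inner-product computation (this is where the cocycle identity \eqref{eq:cocycle} and a chain of substitutions do their work), establishes surjectivity by locating point masses in the image, and only afterwards transports the left action to arrive at \eqref{eq:leftaction}. You instead assemble the isomorphism from canonical unitaries: $D_{fe}\ox_{B_e}F_e^*\cong\bigoplus_\nu F_e^*$ because $D_{fe}$ is a free right $B_e$-module, and $F_f\ox_{B_f}M\cong\bigoplus_\rho M$ via the column picture of Lemma~\ref{lem:FisColumns}, $x\ox m\mapsto(\phi(x_\rho)m)_\rho$, whose unitarity needs only that the left $B_f$-action $\phi$ on $M$ is unital (Proposition~\ref{prop:Dfe}). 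This makes isometry and surjectivity automatic and confines all the cocycle bookkeeping to the computation of the induced left action, where $a\in\Aa_f$ acts as a matrix whose entries act through $\ol{\varphi}_{fe}$ of \eqref{eq:leftactionD}; carrying this out does land exactly on \eqref{eq:leftaction}, and unwinding your composite on simple tensors reproduces the paper's $\Phi$, so the two constructions agree on the nose. The trade-off: your argument is shorter and explains conceptually why the identification exists, whereas the paper's explicit formula makes the behaviour of point masses transparent, and it is precisely that point-mass description which is exploited immediately afterwards (the identities \eqref{eq:pmmult}--\eqref{eq:pmcpt} and the frame of Lemma~\ref{lem:compacts}). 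Your handling of the remaining claims is also sound: faithfulness and non-degeneracy of $\psi_{fe}$ do follow by pushing the unital, faithful action $\ol{\varphi}_{fe}$ through the imprimitivity bimodules $F_f$ and $F_e^*$, and the case $f=\ol{e}$ only requires the invariance of $\Sigma_e\setminus\{1\}$, as you note.
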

\begin{proof}
	Define a linear map $\Phi \colon C_c(\Sigma_f \times G_f) \otimes_{C_c(G_f)} C_c((\Sigma_e \setminus \Delta_{fe}) \times G_e) \otimes_{C_c(G_e)} C_c(G_e \times \Sigma_e) \to \Ee_{fe}$ by
	\[
		\Phi(a \otimes \xi \otimes b)(\mu, \sigma, h, \nu) = \sum_{g \in G_f}\sum_{k \in G_e} a(\mu,g^{-1})\,\xi(g\cdot \sigma, c_{fe}(g,\sigma)k) b(k^{-1}h,\nu).
	\]
	Using the inner product on $E_{fe}$ we compute for $a \in C_c(\Sigma_f \times G_f)$, $\xi \in C_c((\Sigma_e \setminus \Delta_{fe}) \times G_e)$, and $b \in C_c(G_e \times \Sigma_e)$,
	\begin{align*}
		 & (a \otimes \xi \otimes b \mid a \otimes \xi \otimes b)_{\Aa_e} (\mu,h,\nu)                                 \\
		 & =(b \mid  (\xi \mid \ol{\varphi}_{fe}((a \mid a)_{C_c(G_f)})  \xi)_{C_c(G_e)} \cdot b)_{\Aa_e} (\mu,h,\nu) \\
		 & = \sum_{\substack{k \in G_e                                                                                \\g \in G_e}} \ol{b(k,\mu)} (\xi \mid \ol{\varphi}_{fe}((a \mid a)_{C_c(G_f)}) \xi)_{C_c(G_e)} (g) \,b(g^{-1}kh,\nu)\\
		 & = \sum_{\substack{k \in G_e                                                                                \\g \in G_e}} \sum_{\substack{\rho \in \Sigma_e \setminus \Delta_{fe}\\l \in G_e}} \ol{\xi(\mu , l )}\ol{b(k,\mu)}  \ol{\varphi}_{fe}((a \mid a)_{C_c(G_f)})\xi(\rho,lg) b(g^{-1}kh,\nu)\\
		 & = \sum_{\substack{k \in G_e                                                                                \\g \in G_e}} \sum_{\substack{\rho \in \Sigma_e \setminus \Delta_{fe}\\l \in G_e}}\sum_{m \in G_f} \ol{\xi(\rho , l )}\ol{b(k,\mu)} (a \mid a)_{C_c(G_f)}(m^{-1})\xi(m \cdot \rho,c_{fe}(m,\rho)lg) b(g^{-1}kh,\nu)\\
		 & = \sum_{\substack{k \in G_e                                                                                \\g \in G_e}} \sum_{\substack{\rho \in \Sigma_e \setminus \Delta_{fe}\\l \in G_e}}\sum_{\substack{m \in G_f}} \sum_{\substack{\sigma \in \Sigma_f\\r \in G_f}} \ol{a(\sigma,r)}\ol{\xi(\rho , l )}\ol{b(k,\mu)}a(\sigma,rm^{-1})\xi(m \cdot \rho,c_{fe}(m,\rho)lg) b(g^{-1}kh,\nu).
	\end{align*}
	Now make the following sequence of substitutions: $rm^{-1} \mapsto s$, $r \cdot \rho \mapsto \lambda$, $l \mapsto c_{fe}(r^{-1},\lambda)d$, $dg \mapsto t$, and $dk \mapsto u$. Upon performing these substitutions, the cocycle condition \eqref{eq:cocycle} implies that
	\begin{align*}
		 & (a \otimes \xi \otimes b \mid a \otimes \xi \otimes b)_{\Aa_e} (\mu,h,\nu)                  \\
		 & = \sum_{\substack{\sigma \in \Sigma_f                                                       \\\lambda \in \Sigma_e \setminus \Delta_{fe}}} \sum_{u \in G_e}
		\Big(\sum_{\substack{ r \in G_{f}                                                              \\d \in G_e}}
		\ol{a(\sigma,r) \xi(r^{-1} \cdot \lambda, c_{fe}(r^{-1},\lambda) d) b(d^{-1}u,\mu)} \Big)      \\
		 & \quad \times
		\Big(\sum_{\substack{ s \in G_{f}                                                              \\ t \in G_e}} a(\sigma,s) \xi(s^{-1} \cdot \lambda, c_{fe}(s^{-1},\lambda) t ) b(t^{-1}uh,\nu) \Big) \\
		 & =  \sum_{\substack{\sigma \in \Sigma_f                                                      \\\lambda \in \Sigma_e \setminus \Delta_{fe}}} \sum_{u \in G_e} \ol{\Phi(a \otimes \xi \otimes b)(\sigma, \lambda, u, \mu)} \Phi(a \otimes \xi \otimes b)(\sigma, \lambda, uh, \nu)\\
		 & = (\Phi(a \otimes \xi \otimes b) \mid \Phi(a \otimes \xi \otimes b))_{C_c(G_e)}(\mu,h,\nu).
	\end{align*}
	As such, $\Phi$ extends to an isometric linear map $\Phi\colon E_{fe} \to (F_e^*)^{\Sigma_f \times (\Sigma_e \setminus \Delta_{fe})}.$
	Let $\epsilon^f_{\mu,g}$ denote the point mass at $(\mu,g) \in \Sigma_f \times G_f$; let $\chi^{fe}_{\nu,h}$ denote the point mass at $h \in G_e$ in the $\nu$-th copy of $C^*(G_e)$ in $D_{fe}$; and let $\epsilon_{k,\sigma}^e$ denote the point mass at $(k,\sigma) \in G_e \times \Sigma_e$. Then $\Phi ( \epsilon_{\mu,1}^e \ox \chi^{fe}_{\sigma,1} \ox \epsilon_{h,\nu}^e)$ is the point mass at $(\mu,\nu,h,\sigma)$ in $\Ee_{fe}$. Since $\Ee_{fe}$ is spanned by point masses, $\Phi$ is surjective.

	It is routine to check that $\Phi$ respects the right action of $A_e$. By pulling the left action $\varphi_{fe}$ of $A_f$ on $E_{fe}$ over to a left action on $(F_e^*)^{\Sigma_f \times (\Sigma_e \setminus \Delta_{fe})}$ we see that \eqref{eq:leftaction} is satisfied, and as such $(\varphi_{fe},E_{fe})$ and $(\psi_{fe},(F_e^*)^{\Sigma_f \times (\Sigma_e \setminus \Delta_{fe})})$ are isomorphic.
\end{proof}

In the sequel we will always identify  $(\varphi_{fe},E_{fe})$ with $(\psi_{fe},(F_e^*)^{\Sigma_f \times (\Sigma_e \setminus \Delta_{fe})})$.

\begin{ntn}
	We introduce the following notation for point masses.
	For each $e \in \Gamma^1$ we write $\pointmv{e}{\mu}{g}{\nu}$ for the point mass at $(\mu,g,\nu) \in \Sigma_e \times G_e \times \Sigma_e$, considered as element of the dense $*$-subalgebra $\Aa_e = C_c(\Sigma_e \times G_e \times \Sigma_e)$ of $A_e$.
	For each $fe \in \Gamma^2$ let
	\begin{equation}
		X_{fe} = \Sigma_f \times (\Sigma_e \setminus \Delta_{fe} )\times G_e \times
		\Sigma_e
		\label{eq:X}
	\end{equation}
	so that $\Ee_{fe} = C_c(X_{fe})$.  We let $\chare{fe}{\mu}{\nu}{g}{\sigma}$ denote the point mass at $(\mu,\nu,g,\sigma) \in X_{fe}$.
\end{ntn}

\begin{rmk}
	For the reader familiar with \cite[Definition 2.4]{BMPST17}, it is possible to view elements of $X_{fe}$ as a triple consisting of a ``$\Gg$-path'' of length 2, a group element in $G_e$, and a ``$\Gg$-path'' of length 1. To be more precise, let
	$
		Y_{fe}  = \{ (g_1fg_2e,k,g_3e) \in \Gg^2 \times G_e \times \Gg^1\}.
	$
	Then the map $\pi \colon Y_{fe} \to X_{fe}$ given by $\pi(g_1fg_2e,k,g_3e) = (g_1,g_2,k,g_3)$ is a bijection, even in the case where $f = \ol{e}$.
\end{rmk}

We record some identities for point masses which can be deduced from the structures of $A$ and $(F_e^*)^{\Sigma_f \times (\Sigma_e \setminus \Delta_{fe})}$, together with Lemma~\ref{lem:Eidentify}.
With $\delta$ the Kronecker delta we have
\begin{align}
	\pointmv{f}{\mu}{g}{\nu} \pointmv{e}{a}{h}{b} & = \delta_{\nu f, a e} \, \pointmv{f}{\mu}{gh}{b} \label{eq:pmmult}                         \\
	(\chare{fe}{\mu}{\nu}{g}{\sigma} \mid \chare{pq}{a}{b}{h}{c})_A
	                                              & = \delta_{\mu f \nu e, apbq}\, \pointmv{e}{\sigma}{g^{-1}h}{c} \label{eq:pmip}             \\
	\chare{fe}{\mu}{\nu}{g}{\sigma}  \cdot \pointmv{t}{a}{h}{b}
	                                              & = \delta_{\sigma e, a t} \, \chare{fe}{\mu}{\nu}{gh}{b} \label{eq:pmright}, \text{ and}    \\
	\varphi(\pointmv{t}{a}{h}{b}) \chare{fe}{\mu}{\nu}{g}{\sigma}
	                                              & = \delta_{bt,\mu f} \, \chare{fe}{a}{h\cdot \nu}{c_{fe}(h,\nu)}{\sigma} \label{eq:pmleft}.
\end{align}
Together, \eqref{eq:pmip} and \eqref{eq:pmright} imply that
\begin{equation} \label{eq:pmcpt}
	\Theta_{\chare{fe}{\mu}{\nu}{g}{\sigma}, \chare{pq}{a}{b}{h}{c}} = \delta_{\sigma e, c q} \,	\Theta_{\chare{fe}{\mu}{\nu}{g}{\sigma}, \chare{pe}{a}{b}{h}{\sigma}}.
\end{equation}

We finish this section with a description of a frame for $E$, and consequently a description of $\varphi(a)$ in terms of rank-1 operators. Frames were introduced by Frank and Larson \cite{FL02} for Hilbert $C^*$-modules as a generalisation of orthonormal bases. We take the definition from \cite{RRS19} that \hl{frame for $E$} consists of a sequence $(u_i)_{i=1}^\infty$ in $E$ such that $\sum_{i=1}^{\infty}\Theta_{u_i,u_i}$ converges strictly to the identity operator on $E$.

\begin{lem}\label{lem:compacts}
	Let $\Gg  = (\Gamma,G)$ be a locally finite nonsingular graph of countable groups. Then
	\[
		(\chare{fe}{\mu}{\nu}{1}{1})_{fe \in \Gamma^2,\,\mu \in \Sigma_f,\, \nu \in \Sigma_e \setminus \Delta_{fe}}
	\]
	constitutes a frame for $E$. In particular, for each $f \in \Gamma^1$ and $a \in A_f$ we have
	\begin{equation}\label{eq:compacts}
		\varphi(a) = \sum_{\mu \in \Sigma_f} \sum_{\substack{\substack{r(e)= s(f)}\\\nu \in \Sigma_e \setminus \Delta_{fe}}}  \Theta_{\varphi(a)\chare{fe}{\mu}{\nu}{1}{1},\chare{fe}{\mu}{\nu}{1}{1}}.
	\end{equation}
\end{lem}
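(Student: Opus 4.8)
The plan is to verify the frame condition directly on the dense set of point masses, using the multiplication and inner-product identities \eqref{eq:pmip} and \eqref{eq:pmright}. Abbreviate $u_{fe,\mu,\nu} := \chare{fe}{\mu}{\nu}{1}{1}$. First I would record that each such vector is ``partial-isometry-like'': by \eqref{eq:pmip} we have $(u_{fe,\mu,\nu} \mid u_{fe,\mu,\nu})_A = \pointmv{e}{1}{1}{1}$, a projection in $A_e$, while for distinct index triples the inner product vanishes on account of the Kronecker delta $\delta_{\mu f \nu e,\,\cdots}$. Consequently the operators $\Theta_{u_{fe,\mu,\nu},u_{fe,\mu,\nu}}$ are mutually orthogonal projections in $\End_A^0(E)$, so that every finite partial sum is a self-adjoint contraction.

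The core computation is to apply $\Theta_{u_{fe,\mu,\nu},u_{fe,\mu,\nu}}$ to a generic point mass $\chare{pq}{a}{b}{h}{c}$. Using $\Theta_{\xi,\eta}(\zeta) = \xi \cdot (\eta \mid \zeta)_A$ together with \eqref{eq:pmip}, which forces $\mu=a$, $f=p$, $\nu=b$, $e=q$ and returns $\pointmv{e}{1}{h}{c}$, followed by \eqref{eq:pmright}, which then returns $\chare{fe}{\mu}{\nu}{h}{c}$, one obtains $\Theta_{u_{fe,\mu,\nu},u_{fe,\mu,\nu}}(\chare{pq}{a}{b}{h}{c}) = \delta_{\mu f \nu e,\,apbq}\,\chare{pq}{a}{b}{h}{c}$. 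Summing over all admissible $(fe,\mu,\nu)$ with $\mu\in\Sigma_f$ and $\nu\in\Sigma_e\setminus\Delta_{fe}$, exactly one term survives and the sum acts as the identity on each point mass; note that the index range $\nu\in\Sigma_e\setminus\Delta_{fe}$ is precisely the one appearing in $X_{fe}$, so the removal of the identity coset when $f=\ol{e}$ causes no mismatch.

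Since the point masses span a dense subspace of each $C_c(X_{fe})$, hence of $E$, and the finite partial sums are mutually orthogonal projections (in particular uniformly bounded and self-adjoint), I would conclude that $\sum_{fe,\mu,\nu}\Theta_{u_{fe,\mu,\nu},u_{fe,\mu,\nu}}$ converges strongly to $\id_E$; by self-adjointness of the partial sums this is strict convergence to $\id_E$, which is the frame property. The step needing the most care is not any single manipulation but keeping the bookkeeping of the Kronecker deltas and the $\Delta_{fe}$ restriction straight, and justifying the passage from pointwise identity on point masses to strict convergence of the operator net, which rests on the uniform boundedness supplied by the projection property.

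Finally, for \eqref{eq:compacts} I would invoke the reconstruction identity. For $a\in A_f$ we have $\varphi(a) = \varphi(a)\,\id_E$, and since $\varphi(a)\in\End_A^0(E)$, left multiplication is strictly continuous, so $\varphi(a) = \sum_{f'e,\mu,\nu}\Theta_{\varphi(a)u_{f'e,\mu,\nu},\,u_{f'e,\mu,\nu}}$, using $\varphi(a)\Theta_{\xi,\eta}=\Theta_{\varphi(a)\xi,\eta}$. Because $\varphi=\oplus_{f'e}\varphi_{f'e}$ acts nontrivially only on summands $E_{fe}$ with first edge $f$, every term with $f'\ne f$ vanishes, and the surviving terms require $fe\in\Gamma^2$, that is $r(e)=s(f)$. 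Local finiteness of $\Gg$ renders $\{e : r(e)=s(f)\}$ and each $\Sigma_e$ finite, so the remaining sum is in fact finite and equals the right-hand side of \eqref{eq:compacts}.
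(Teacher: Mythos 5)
Your proof is correct, but it takes a more computational route than the paper's. The paper deduces the frame property structurally: by Lemma~\ref{lem:Eidentify}, $E$ is a direct sum of copies of the modules $F_e^*$, and each $F_e^*$ admits a frame consisting of the single element $\epsilon_{1,1}^e$; under the identification these singleton frames become exactly the point masses $\chare{fe}{\mu}{\nu}{1}{1}$, so the frame property of $E$ is inherited from the summands with essentially no computation. You instead verify the frame condition by hand from the identities \eqref{eq:pmip} and \eqref{eq:pmright}: the operators $\Theta_{\chare{fe}{\mu}{\nu}{1}{1},\chare{fe}{\mu}{\nu}{1}{1}}$ are mutually orthogonal projections (note that $(\chare{fe}{\mu}{\nu}{1}{1}\mid\chare{fe}{\mu}{\nu}{1}{1})_A=\pointmv{e}{1}{1}{1}$ is a projection and $\chare{fe}{\mu}{\nu}{1}{1}\cdot\pointmv{e}{1}{1}{1}=\chare{fe}{\mu}{\nu}{1}{1}$, which your argument uses implicitly), each generic point mass is fixed by exactly one of them, and uniform boundedness together with self-adjointness of the partial sums upgrades convergence on the dense span of point masses to strict convergence. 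What your approach buys is self-containedness: it needs neither Lemma~\ref{lem:Eidentify} nor the singleton-frame fact for conjugates of imprimitivity bimodules, at the cost of having to justify the strong-to-strict convergence step explicitly, which you do correctly. For the second statement the two arguments coincide---compactness of $\varphi(a)$ and the definition of strict convergence give norm convergence of $\sum_i \Theta_{\varphi(a)u_i,u_i}$---though you are slightly more careful than the paper in observing that the terms whose first edge differs from $f$ vanish because $\varphi(a)$ annihilates those summands, and that local finiteness makes the surviving sum in \eqref{eq:compacts} finite.
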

\begin{proof}
	Let $\epsilon_{g,\sigma}^e \in F_e^*$ denote the point mass at $(g,\sigma) \in G_e \times \Sigma_e$. Observe that for each $e \in \Gamma^1$ the right $A_e$-module $F_e^*$ admits a frame consisting of just $\epsilon_{1,1}^e$. Since Lemma~\ref{lem:Eidentify} implies that $E$ is a direct sum of copies of $F_e^*$, the point masses $\chare{fe}{\mu}{\nu}{1}{1}$ constitute a frame for $E$.

	The second statement follows from the first: if $(u_i)_{i=1}^\infty$ is a frame for $E$ and $a \in A$, then since $\varphi(a)$ is compact,
	$
		\varphi(a) = \varphi(a) \sum_{i=1}^\infty \Theta_{u_i,u_i} =  \sum_{i=1}^\infty \Theta_{\varphi(a)u_i,u_i}
	$
	with convergence in norm.
\end{proof}

\section{A Cuntz-Pimsner model for $C^*(\Gg)$}
\label{sec:cpmodel}

In this section we prove our first main result: that $C^*(\Gg)$ is isomorphic to the Cuntz-Pimsner algebra of the amplified graph of groups correspondence $(\varphi,E)$ associated to $\Gg$. Cuntz-Pimsner algebras can be considered a ``generalised crossed products by $\ZZ$'' and so in light of Theorem~\ref{thm:bmpst17iso}, we transform a crossed product by a potentially complicated group acting on the boundary of a tree to a generalised crossed product by $\ZZ$.

We recall from \cite{Kat04cor} that Cuntz-Pimsner algebras admit a \hl{gauge action}. That is an action of the circle group $\gamma \colon \TT \to \Aut(\Oo_E)$  such that $\gamma_z (i_E(\xi)) = z i_E(\xi)$ and $\gamma_z(i_A(a)) = i_A(a)$ for all $\xi \in E$ and $a \in A$, where $(i_A,i_E) \colon (\varphi,E) \to \Oo_E$ is the universal Cuntz-Pimsner covariant representation. Similarly, it is stated in \cite[Proposition 3.7]{BMPST17} that $C^*(\Gg)$ admits a gauge action $\gamma' \colon \TT \to \Aut(C^*(\Gg))$ satisfying $\gamma'_z(s_e) = zs_e$ and $\gamma'_z(u_{x,g}) = u_{x,g}$ for all $e \in \Gamma^1$, $x \in \Gamma^0$ and $g \in G_x$.

\begin{thm}\label{thm:isomorphism}
	Let $\Gg = (\Gamma,G)$ be a locally finite nonsingular graph of countable groups and let $(\varphi,E)$ be the associated amplified graph of groups correspondence. Then the Cuntz-Pimsner algebra $\Oo_{E}$ is gauge-equivariantly isomorphic to $C^*(\Gg)$.

\end{thm}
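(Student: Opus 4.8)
The plan is to prove the isomorphism with a single Cuntz--Pimsner covariant representation $(\pi,\psi)\colon(\varphi,E)\to C^*(\Gg)$, and then to deduce that the induced map $\pi\times\psi\colon\Oo_E\to C^*(\Gg)$ is an isomorphism via the gauge-invariant uniqueness theorem \cite{Kat04cor,Pim97}, rather than building an explicit inverse. Writing $s_{ge}:=u_{r(e),g}s_e$ for the generators of $C^*(\Gg)$, I would define $\pi$ and $\psi$ on point masses by
\[
	\pi(\pointmv{e}{\mu}{g}{\nu}) := s_{\mu e}\,u_{s(e),\alpha_{\ol{e}}(g)}\,s_{\nu e}^*
	\qquad\text{and}\qquad
	\psi(\chare{fe}{\mu}{\nu}{g}{\sigma}) := s_{\mu f}\,s_{\nu e}\,u_{s(e),\alpha_{\ol{e}}(g)}\,s_{\sigma e}^*.
\]
First I would check that $\pi$ respects the convolution product and involution of $\Aa_e$, so that it extends to a $*$-homomorphism $A=\bigoplus_e A_e\to C^*(\Gg)$; here the orthogonality forced by \ref{itm:G1} and \ref{itm:G3} matches the direct-sum decomposition of $A$, while \ref{itm:G2} and \ref{itm:G4} are used to evaluate the products $s_{\nu e}^*s_{\nu' e}$.

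Next I would verify the three representation axioms on point masses by reading off the identities \eqref{eq:pmmult}--\eqref{eq:pmleft}. Axiom~(i) follows from \eqref{eq:pmright}, and axiom~(iii) $\psi(\xi)^*\psi(\eta)=\pi((\xi\mid\eta)_A)$ from \eqref{eq:pmip}; the latter also supplies the bound $\|\psi(\xi)\|^2=\|\pi((\xi\mid\xi)_A)\|\le\|\xi\|^2$ needed to extend $\psi$ to a well-defined contractive linear map on all of $E$. Axiom~(ii), $\pi(a)\psi(\xi)=\psi(\varphi(a)\xi)$, is the first place the carrying Lemma~\ref{lem:carrying} does real work: after using \ref{itm:G4} to absorb $s_f^*s_f$ one is left with a term $u_{r(e),\alpha_{\ol{f}}(g)}s_{\nu e}$, and carrying it through the edge produces exactly $s_{(\alpha_{\ol{f}}(g)\cdot\nu)e}\,u_{s(e),\alpha_{\ol{e}}(c_{e}(\alpha_{\ol{f}}(g),\nu))}$, where the cocycle identity \eqref{eq:cee-eff-ee} identifies $c_e(\alpha_{\ol{f}}(g),\nu)=c_{fe}(g,\nu)$, matching the left action of Lemma~\ref{lem:Eidentify}.

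The crux of the argument is Cuntz--Pimsner covariance, $\psi^{(1)}(\varphi(a))=\pi(a)$ for all $a\in A$. Using the frame of Lemma~\ref{lem:compacts} and \eqref{eq:compacts}, for $a=\pointmv{f}{\mu}{g}{\nu}$ one has
\[
	\psi^{(1)}(\varphi(a))=\sum_{\mu'\in\Sigma_f}\ \sum_{r(e)=s(f)}\ \sum_{\lambda\in\Sigma_e\setminus\Delta_{fe}}\psi\big(\varphi(a)\chare{fe}{\mu'}{\lambda}{1}{1}\big)\,\psi(\chare{fe}{\mu'}{\lambda}{1}{1})^*,
\]
and the task is to show this collapses to $\pi(a)$. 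By \eqref{eq:pmleft} only the term $\mu'=\nu$ survives, and after carrying group elements through edges the sum reduces to $s_{\mu f}\,u_{s(f),\alpha_{\ol{f}}(g)}\big(\sum_{r(e)=s(f),\,\lambda\in\Sigma_e\setminus\Delta_{fe}} s_{\lambda e}s_{\lambda e}^*\big)s_{\nu f}^*$; relation \ref{itm:G4} (whose excluded term is precisely the one removed by $\Delta_{fe}$) turns the bracketed sum into $s_f^*s_f$, leaving $s_{\mu f}u_{s(f),\alpha_{\ol{f}}(g)}s_{\nu f}^*=\pi(a)$. I expect the bookkeeping of the $f=\ol{e}$ summand to be the delicate step: one must track that $\Sigma_e\setminus\Delta_{fe}$ deletes exactly the identity coset (mirroring $s_es_{\ol{e}}=0$ from \ref{itm:G3}), and that the projections $s_e^*s_e$ commute with the relevant vertex-group unitaries, which is why $E$ was built on the dual graph in the first place.

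Finally I would settle surjectivity and injectivity. Surjectivity is explicit: a computation with \ref{itm:G4} gives $s_{\mu f}=\sum_{r(e)=s(f)}\sum_{\nu\in\Sigma_e\setminus\Delta_{fe}}\psi(\chare{fe}{\mu}{\nu}{1}{\nu})$ since the inner sum rebuilds $s_f^*s_f$, so every $s_e$ lies in $C^*(\pi,\psi)$; and summing $\pi(\pointmv{e}{g\cdot\mu}{c_e(g,\mu)}{\mu})$ over $\mu\in\Sigma_e$ and over $e$ with $r(e)=x$ recovers $u_{x,g}$ by Lemma~\ref{lem:carrying} together with \ref{itm:G3}--\ref{itm:G4}. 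For injectivity, $\pi$ is faithful because $\ker(\pi|_{A_e})=M_{\Sigma_e}(J)$ for an ideal $J\triangleleft C^*(G_e)$, and the corner map $g\mapsto s_{\mu e}u_{s(e),\alpha_{\ol{e}}(g)}s_{\mu e}^*$ is a faithful representation of $C^*(G_e)$ (as $\alpha_{\ol{e}}$ is a monomorphism and the vertex representations $u_x$ are faithful, e.g.\ via the concrete $\Gg$-family of \cite{BMPST17}). Since $(\varphi,E)$ is faithful with $\varphi(A)\subseteq\End_A^0(E)$, Katsura's ideal is all of $A$, so covariance holds on $A$; and the defining words show $\gamma'_z\circ\pi=\pi$ and $\gamma'_z\circ\psi=z\,\psi$, so the representation is gauge-compatible. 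The gauge-invariant uniqueness theorem then forces $\pi\times\psi$ to be injective, hence the surjective, gauge-equivariant $*$-homomorphism $\pi\times\psi\colon\Oo_E\to C^*(\Gg)$ is the desired gauge-equivariant isomorphism.
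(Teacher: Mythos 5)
Your first half coincides with the paper's own argument: the formulas for $\pi$ and $\psi$, the verification of the representation axioms, and the frame-based Cuntz--Pimsner covariance computation are exactly Lemmas~\ref{lem:univhom}--\ref{lem:univlinearmap} and Proposition~\ref{prop:cpcov}, and your surjectivity computations (rebuilding $s_{\mu f}$ from $\psi$ via \ref{itm:G4}, and $u_{x,g}$ from $\pi$ via Lemma~\ref{lem:carrying} and \ref{itm:G3}--\ref{itm:G4}) are correct. Where you genuinely diverge is the last step: the paper constructs an explicit inverse $\Phi$ by exhibiting a $\Gg$-family $\{V_x,T_e\}$ inside $\Oo_E$ and invoking the universal property of $C^*(\Gg)$, whereas you invoke the gauge-invariant uniqueness theorem \cite{Kat04cor}. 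That theorem has as a hypothesis that the coefficient map $\pi\colon A\to C^*(\Gg)$ is \emph{injective}, and this is where your proof has a real gap.

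Injectivity of $\pi$ amounts to the claim that for each $e\in\Gamma^1$ the integrated form of $g\mapsto u_{s(e),\alpha_{\ol{e}}(g)}s_e^*s_e$ is a faithful representation of the \emph{full} group $C^*$-algebra $C^*(G_e)$ on a corner of $C^*(\Gg)$. Your justification --- that $\alpha_{\ol{e}}$ is a monomorphism and the vertex representations are faithful ``via the concrete $\Gg$-family of \cite{BMPST17}'' --- conflates injectivity on group elements with faithfulness of the integrated representation on the full group algebra, and the latter fails for concrete families of the path-space type. Take the loop of groups with $G_v=G_e=\ZZ$ and both monomorphisms the identity (the case $m=n=1$ of Example~\ref{ex:loop}): the tree $X_\Gg$ is a line on which $G_v$ acts trivially, so a $\Gg$-family represented on $\ell^2$ of paths has $u_v$ equal to a multiple of the trivial representation of $\ZZ$, which is badly non-faithful on $C^*(\ZZ)\cong C(\TT)$; yet the theorem asserts $C^*(\Gg)\cong C(\TT^2)^2$, whose corners do contain $C(\TT)$ faithfully. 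So the fact you need is true, but it is not supplied by anything you cite: in this paper it emerges as a \emph{corollary} of the theorem (since $i_A$ is injective in $\Oo_E$ and the explicit inverse transports this to $\pi$), so using it as an input to gauge-invariant uniqueness is circular as written. To repair the argument you would need an independent proof that the full edge-group algebras embed --- for instance, via Theorem~\ref{thm:bmpst17iso} together with a weak-containment argument: since $\Gg$ is locally finite, the stabiliser in a vertex group $G_x$ of any vertex of $X_\Gg$ has finite index, so representations induced from these stabilisers weakly contain every representation of $G_x$ (as $\lambda_{G_x/H}$ contains the trivial representation for $[G_x:H]<\infty$, and $\Ind_H^{G_x}(\pi|_H)\cong \pi\ox\lambda_{G_x/H}$), whence $C^*(G_x)\to C(\partial X_\Gg)\rtimes_\tau\pi_1(\Gg)$ is injective, and one must further check the corner projection $s_e^*s_e$ does not kill anything. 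This is substantive work, roughly comparable to the paper's construction of the inverse $\Gg$-family, and without it the proposal does not close.
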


For the remainder of this section we fix a locally finite nonsingular graph of countable groups $\Gg = (\Gamma,G)$.
Recall that  $C^*(\Gg)$ is generated by a universal $\Gg$-family $\{u_{x},s_e \mid x \in \Gamma^0, e \in \Gamma^1\}$. For each $e \in \Gamma^1$ and $g \in G_{r(e)}$ write $s_{ge}$ for the partial isometry $u_{r(e),g}s_e$.
In order to prove Theorem~\ref{thm:isomorphism} we first construct a Cuntz-Pimsner covariant representation $(\pi,\psi)$ of the $C^*$-correspondence $E$ inside $C^*(\Gg)$. The universal property of Cuntz-Pimsner algebras then induces a $*$-homomorphism $\Psi \colon \Oo_{E} \to C^*(\Gg)$.

To simplify some of the computations in $C^*(\Gg)$ we have the following lemma.

\begin{lem} \label{lem:complem}
	Let $e,f \in \Gamma^1$ and suppose that $g \in \Sigma_e$ and $h \in \Sigma_f$. Then
	\begin{equation}\label{eq:orth}
		s_{ge}^*s_{hf} = \delta_{ge,hf} s_e^*s_e,
	\end{equation}
	and
	\begin{equation}\label{eq:proj}
		s_e^*s_e s_{hf} = \delta_{r(f),s(e)}(1-\delta_{hf,1\ol{e}}) s_{hf}.
	\end{equation}
\end{lem}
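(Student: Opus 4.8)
The plan is to prove \eqref{eq:orth} first and then deduce \eqref{eq:proj} from it together with relation \ref{itm:G4}. For \eqref{eq:orth}, expand $s_{ge}^*s_{hf} = s_e^* u_{r(e),g^{-1}}u_{r(f),h}s_f$. If $r(e)\neq r(f)$, then writing $u_{r(e),g^{-1}}=u_{r(e),g^{-1}}u_{r(e),1}$ and $u_{r(f),h}=u_{r(f),1}u_{r(f),h}$ and invoking \ref{itm:G1} makes the middle vanish, which matches $\delta_{ge,hf}=0$. When $r(e)=r(f)=:x$ the product becomes $s_e^* u_{x,g^{-1}h}s_f$, and Lemma~\ref{lem:carrying} lets me carry the group element through $s_f$, writing $u_{x,g^{-1}h}s_f = s_{\mu_0 f}\,u_{s(f),c}$ with $\mu_0 := (g^{-1}h)\cdot 1 \in \Sigma_f$ and $c := c_f(g^{-1}h,1)\in G_f$. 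This reduces everything to understanding the ``base'' products $s_e^* s_{\mu f}$ in which the left decoration is trivial.

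The crux is therefore the claim that, for $r(e)=r(f)=x$ and $\mu\in\Sigma_f$,
\begin{equation*}
	s_e^* s_{\mu f} = \delta_{1e,\mu f}\, s_e^* s_e .
\end{equation*}
This cannot be obtained by pushing group elements through edges, since the transversal part of a group element is exactly the obstruction to applying \ref{itm:G2}; instead I would extract it from the Cuntz--Krieger-type relations. Applying \ref{itm:G3} to $\ol{e}$ gives $u_{x,1}=s_{\ol{e}}^*s_{\ol{e}}+s_e s_e^*$, a sum of two projections equal to a projection, which forces $s_{\ol{e}}^*s_{\ol{e}}\perp s_e s_e^*$. Expanding $s_{\ol{e}}^*s_{\ol{e}}$ by \ref{itm:G4} (for $\ol{e}$) as $\sum_{\mu f\neq 1e} s_{\mu f}s_{\mu f}^*$ and using that these summands are positive, I conclude $s_{\mu f}s_{\mu f}^*\, s_e s_e^* = 0$ for every $\mu f\neq 1e$; compressing by $s_{\mu f}^*$ on the left and $s_e$ on the right then yields $s_e^* s_{\mu f}=0$, the off-diagonal case. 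The diagonal case $\mu f=1e$ (forcing $f=e$, $\mu=1$) is immediate.

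With the base case in hand, \eqref{eq:orth} follows: by uniqueness of transversal representatives $\delta_{1e,\mu_0 f}=\delta_{e,f}\delta_{g,h}=\delta_{ge,hf}$, and on the diagonal one checks from \eqref{eq:cocyclee} that $c=1$, so the trailing $u_{s(f),c}=u_{s(e),1}$ acts as the identity on $s_e^*s_e$ (which lies under $u_{s(e),1}$ by \ref{itm:G3}). Finally, for \eqref{eq:proj} I expand $s_e^*s_e$ via \ref{itm:G4} and apply \eqref{eq:orth} termwise: each factor $s_{\mu q}^*s_{hf}=\delta_{\mu q,hf}s_q^*s_q$ selects at most the single summand $q=f,\ \mu=h$, which occurs in the expansion precisely when $r(f)=s(e)$ and $hf\neq 1\ol{e}$, and then $s_{hf}s_f^*s_f=s_{hf}$; this produces exactly $\delta_{r(f),s(e)}(1-\delta_{hf,1\ol{e}})s_{hf}$. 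The main obstacle is the base orthogonality above: the reductions and the derivation of \eqref{eq:proj} are bookkeeping, but that single step genuinely requires combining \ref{itm:G3} and \ref{itm:G4} rather than manipulating one relation in isolation.
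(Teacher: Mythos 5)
Your proof is correct, and it rests on the same engine as the paper's: relations \ref{itm:G3} and \ref{itm:G4} force the range projections $s_{\mu q}s_{\mu q}^*$ to be mutually orthogonal (a projection written as a sum of projections has mutually orthogonal summands — exactly the positivity argument you spell out). The difference is organizational. The paper asserts this orthogonality directly for \emph{all} decorated projections $s_{ge}s_{ge}^*$, $g\in\Sigma_e$ (for a fixed vertex $x$, combining \ref{itm:G3} for $\ol{e}$ with \ref{itm:G4} for $\ol{e}$ gives $u_{x,1}=\sum_{r(q)=x,\,\mu\in\Sigma_q}s_{\mu q}s_{\mu q}^*$, which already contains every decorated projection), and then \eqref{eq:orth} is a two-line sandwich: $s_{ge}^*s_{hf}=s_{ge}^*s_{ge}s_{ge}^*s_{hf}s_{hf}^*s_{hf}=\delta_{ge,hf}\,s_{ge}^*s_{ge}=\delta_{ge,hf}\,s_e^*s_e$; the same trick, multiplying the \ref{itm:G4} expansion of $s_e^*s_e$ against $s_{hf}s_{hf}^*s_{hf}$, gives \eqref{eq:proj}. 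You instead establish orthogonality only in the ``base'' case $s_es_e^*\perp s_{\mu f}s_{\mu f}^*$ and handle the decorations $g,h$ by routing them through Lemma~\ref{lem:carrying} and the cocycle identities; this is valid, but your remark that the decorated case ``cannot be obtained by pushing group elements through edges'' somewhat overstates the need for that detour — the decorated projections are themselves summands of $u_{x,1}$, so the sum-of-projections argument applies to them wholesale and no reduction is required. What your write-up buys is that it makes explicit the orthogonality argument the paper leaves as an assertion; what it costs is the extra cocycle bookkeeping ($\mu_0=(g^{-1}h)\cdot 1$, $c_f(g^{-1}h,1)$, and the check that $c=1$ on the diagonal), none of which appears in the paper's proof.
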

\begin{proof}
	The identities \ref{itm:G3} and \ref{itm:G4} imply that the projections $s_{ge}s_{ge}^*$ and $s_{hf}s_{hf}^*$ are orthogonal whenever $ge \ne hf$. It then follows that,
	\begin{align*}
		s_{ge}^*s_{hf} = 	s_{ge}^*s_{ge}s_{ge}^*s_{hf}s_{hf}^*s_{hf}
		= \delta_{ge,hf}  s_{ge}^*s_{ge} = \delta_{ge,hf} s_e^* u_{r(e),g}^* u_{r(e),g} s_e =\delta_{ge,hf} s_e^*s_e.
	\end{align*}
	For the second identity, we again compute using \ref{itm:G3} and \ref{itm:G4},
	\begin{align*}
		s_e^*s_e s_{hf} = 	\Big(\sum_{\substack{r(t)=s(e) \\ a \in \Sigma_t \setminus \Delta_{te}}}s_{at}s_{at}^* \Big) s_{hf} s_{hf}^* s_{hf} = \delta_{r(f),s(e)}(1-\delta_{hf,1\ol{e}}) s_{hf}. &\qedhere
	\end{align*}
\end{proof}

\begin{lem}\label{lem:univhom} Let $A=\bigoplus A_e$ be the algebra from Definition \ref{defn:AandE}.
	There is a $*$-homomorphism $\pi\colon A \to C^*(\Gg)$ satisfying
	\begin{equation}
		\pi (\epsilon^e_{\mu,g,\nu}) = s_{\mu e} u_{s(e),\alpha_{\ol{e}}(g)} s_{\nu e}^*
	\end{equation}
	for each $e \in \Gamma^1$ and $(\mu,g,\nu) \in \Sigma_e \times G_e \times \Sigma_e$.
\end{lem}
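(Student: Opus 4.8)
The plan is to build $\pi$ one summand at a time. Since $A=\bigoplus_{e\in\Gamma^1}A_e$, giving a $*$-homomorphism $\pi\colon A\to C^*(\Gg)$ is the same as giving a family of $*$-homomorphisms $\pi_e\colon A_e\to C^*(\Gg)$ with pairwise orthogonal ranges and setting $\pi=\bigoplus_e\pi_e$. Fixing $e$, I will exploit the identification $A_e\cong M_{\Sigma_e}(\CC)\otimes C^*(G_e)$ under which $\epsilon^e_{\mu,g,\nu}$ corresponds to $E_{\mu\nu}\otimes\delta_g$, where $E_{\mu\nu}$ are the standard matrix units and $\delta_g$ is the point mass at $g\in G_e$. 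Defining a $*$-homomorphism out of this tensor product reduces to producing a system of matrix units together with a unitary representation of $G_e$ whose ranges commute. Accordingly I set $V_{\mu\nu}:=s_{\mu e}u_{s(e),1}s_{\nu e}^*$ and $w_g:=\sum_{\mu\in\Sigma_e}s_{\mu e}u_{s(e),\alpha_{\ol e}(g)}s_{\mu e}^*$, noting that the target value $s_{\mu e}u_{s(e),\alpha_{\ol e}(g)}s_{\nu e}^*$ is exactly $V_{\mu\nu}w_g$.

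The crux is a single commutation identity. Using the intertwining relation \ref{itm:G2} in the form $s_e^*u_{r(e),\alpha_e(g)}=u_{s(e),\alpha_{\ol e}(g)}s_e^*$ (obtained by taking adjoints of \ref{itm:G2} and replacing $g$ by $g^{-1}$) together with \ref{itm:G2} itself, one finds
\[
	u_{s(e),\alpha_{\ol e}(g)}\,s_e^*s_e = s_e^*\,u_{r(e),\alpha_e(g)}\,s_e = s_e^*s_e\,u_{s(e),\alpha_{\ol e}(g)},
\]
so that $u_{s(e),\alpha_{\ol e}(g)}$ commutes with the projection $s_e^*s_e$. Combined with the orthogonality relation \eqref{eq:orth} (giving $s_{\nu e}^*s_{\mu e}=\delta_{\nu\mu}s_e^*s_e$), the identity $s_{\mu e}s_e^*s_e=s_{\mu e}$, and the inequality $s_e^*s_e\le u_{s(e),1}$ from \ref{itm:G3}, this yields all the relations I need: the $V_{\mu\nu}$ form a system of matrix units ($V_{\mu\nu}^*=V_{\nu\mu}$ and $V_{\mu\nu}V_{\rho\sigma}=\delta_{\nu\rho}V_{\mu\sigma}$); the map $g\mapsto w_g$ satisfies $w_1=\sum_\mu V_{\mu\mu}=:p_e$, $w_g^*=w_{g^{-1}}$ and $w_gw_h=w_{gh}$; and $w_g$ commutes with every $V_{\mu\nu}$. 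I expect this commutation computation to be the main obstacle, in that it is where the specific $\Gg$-family relations genuinely enter; everything else is bookkeeping with partial isometries.

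It remains to upgrade these generator-level relations to an honest $*$-homomorphism on the $C^*$-completion $A_e$. The matrix-unit part is automatic because $M_{\Sigma_e}(\CC)$ is finite dimensional; for the group part I invoke the universal property of the \emph{full} group $C^*$-algebra: $g\mapsto w_g$ is a unitary representation of $G_e$ in the unital corner $p_eC^*(\Gg)p_e$, hence extends to a $*$-homomorphism $\rho_e\colon C^*(G_e)\to p_eC^*(\Gg)p_e$. Since $\rho_e$ has range commuting with the matrix units, the pair assembles (using nuclearity of $M_{\Sigma_e}(\CC)$) to a $*$-homomorphism $\pi_e\colon A_e\to C^*(\Gg)$ with $\pi_e(\epsilon^e_{\mu,g,\nu})=V_{\mu\nu}w_g=s_{\mu e}u_{s(e),\alpha_{\ol e}(g)}s_{\nu e}^*$. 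Finally, for $e\ne e'$ the relation \eqref{eq:orth} gives $s_{\nu e}^*s_{\mu' e'}=0$, so $\pi_e(\epsilon^e_{\mu,g,\nu})\pi_{e'}(\epsilon^{e'}_{\mu',g',\nu'})=0$; as such point masses span dense subalgebras, the ranges of $\pi_e$ and $\pi_{e'}$ are orthogonal, and $\pi:=\bigoplus_e\pi_e$ is the required $*$-homomorphism. The multiplicativity and orthogonality are encoded uniformly in the point-mass identity \eqref{eq:pmmult}, which can be used as a cross-check.
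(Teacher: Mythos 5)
Your proof is correct and takes essentially the same route as the paper's: both decompose $A=\bigoplus_{e}A_e$, realise $A_e\cong M_{\Sigma_e}(\CC)\otimes C^*(G_e)$ via the matrix units $s_{\mu e}s_{\nu e}^*$ (your $V_{\mu\nu}$, since $s_eu_{s(e),1}=s_e$) together with a commuting unitary representation of $G_e$ derived from \ref{itm:G2}, and then assemble the summands using the orthogonality relation \eqref{eq:orth}. The only cosmetic difference is that the paper writes the representation as $g\mapsto\sum_{\mu}s_{\mu e}s_{\mu e}^*u_{r(e),\alpha_e(g)}$ while you use the conjugated form $g\mapsto\sum_{\mu}s_{\mu e}u_{s(e),\alpha_{\ol{e}}(g)}s_{\mu e}^*$; both hinge on the same commutation of $u_{s(e),\alpha_{\ol{e}}(g)}$ with $s_e^*s_e$ that you isolate as the key step.
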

\begin{proof}
	For each $e \in \Gamma^1$ consider the $C^*$-subalgebra
	\[
		C_e := \overline{\spaan} \{s_{\mu e} u_{s(e),\alpha_{\ol{e}}(g)} s_{\nu e}^* \mid \mu,\nu \in \Sigma_e, \,g \in G_e\}
	\]
	of $C^*(\Gg)$. Note that \ref{itm:G2} implies $s_{\mu e} u_{s(e),\alpha_{\ol{e}}(g)} s_{\nu e}^* =s_{\mu e}s_{\nu e}^* u_{r(e),\alpha_{e}(g)}$. It follows from \eqref{eq:orth} and \ref{itm:G2} that
	\[
		(s_{\mu e} u_{s(e),\alpha_{\ol{e}}(g)} s_{\nu e}^*) (s_{a e} u_{s(e),\alpha_{\ol{e}}(h)} s_{b e}^*)= \delta_{\nu,a} s_{\mu e} u_{s(e),\alpha_{\ol{e}}(gh)} s_{b e}^*,
	\]
	and $(s_{\mu e} u_{s(e),\alpha_{\ol{e}}(g)} s_{\nu e}^*)^* = s_{\nu e}  u_{s(e),\alpha_{\ol{e}}(g^{-1})} s_{\mu e}^*$.
	In particular, the elements $\{s_{\mu e}s_{\nu e}^* \mid \mu,\,\nu \in \Sigma_e\}$ form a system of matrix units in $C_e$. Hence, there is a $*$-homomorphism $\theta \colon M_{\Sigma_e}(\CC) \to \spaan \{s_{\mu e}s_{\nu e}^* \mid \mu,\,\nu \in \Sigma_e\}$ satisfying $\theta(\pointmv{e}{\mu}{1}{\nu})= s_{\mu e}s_{\nu e}^*$. The map
	and $g \mapsto \sum_{\mu \in \Sigma_e} s_{\mu e}s_{\mu e} u_{r(e),\alpha_e(g)}$ defines a unitary representation of $G_e$ in $C_e$ which commutes with the range of $\theta$. Since $A_e \cong M_{|\Sigma_e|}(\CC) \ox C^*(G_e)$, there is a $*$-homomorphism $\pi_e\colon A_e \to C_e$ satisfying $\pi_e(\pointmv{e}{\mu}{g}{\nu}) = s_{\mu e} u_{s(e),\alpha_{\ol{e}}(g)} s_{\nu e}^*$.

	The identity \eqref{eq:orth} implies that  $\pi_e(A_e)\pi_f(A_f) = \pi_f(A_f)\pi_e(A_e) = 0$ for $e \ne f$. As such, $\pi := \oplus_{e \in \Gamma^1} \pi_e$ defines a $*$-homomorphism from $A$ to  $C^*(\Gg)$ such that $\pi (\epsilon^e_{\mu,g,\nu}) = s_{\mu e} u_{s(e),\alpha_{\ol{e}}(g)} s_{\nu e}^*$ for all $e \in \Gamma^1$, $\mu,\nu \in \Sigma_e$, and $g \in G_e$.
\end{proof}

\begin{lem}\label{lem:univlinearmap}
	There is a norm-decreasing linear map $\psi \colon E \to C^*(\Gg)$ satisfying
	\begin{equation}
		\psi(\chare{fe}{\mu}{\nu}{g}{\sigma}) = s_{\mu f}s_{\nu e} \us{e}{g} s_{\sigma e}^*
	\end{equation}
	for all $fe \in \Gamma^2$ and $(\mu,\nu,g,\sigma) \in X_{fe}$. Moreover, $\psi^*(\xi)\psi(\eta) = \pi((\xi \mid \eta)_A)$ for all $\xi,\,\eta \in E$.
\end{lem}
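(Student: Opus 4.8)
The plan is to build $\psi$ first on the dense subspace of $E$ spanned by the point masses $\chare{fe}{\mu}{\nu}{g}{\sigma}$ and then extend by continuity. Since these point masses sit at distinct points of the disjoint union $\bigsqcup_{fe \in \Gamma^2} X_{fe}$, they are linearly independent in $E = \bigoplus_{fe} E_{fe}$, so the prescription $\psi(\chare{fe}{\mu}{\nu}{g}{\sigma}) = s_{\mu f} s_{\nu e} \us{e}{g} s_{\sigma e}^*$ extends unambiguously to a linear map on their span. Two tasks then remain: verify the inner-product identity $\psi(\xi)^*\psi(\eta) = \pi((\xi \mid \eta)_A)$ on this span, and deduce from it that $\psi$ is norm-decreasing.

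The heart of the argument is the inner-product identity. As both sides are conjugate-linear in $\xi$ and linear in $\eta$, it suffices to check it on pairs of point masses $\xi = \chare{fe}{\mu}{\nu}{g}{\sigma}$ and $\eta = \chare{pq}{a}{b}{h}{c}$. Using $\us{e}{g}^* = \us{e}{g^{-1}}$, I would expand
\[
	\psi(\xi)^*\psi(\eta) = s_{\sigma e}\us{e}{g^{-1}} s_{\nu e}^* s_{\mu f}^* s_{ap} s_{bq} \us{q}{h} s_{cq}^*,
\]
and collapse the middle factors in three moves. First $s_{\mu f}^* s_{ap} = \delta_{f,p}\delta_{\mu,a}\, s_f^* s_f$ by the orthogonality relation \eqref{eq:orth}; next $s_f^* s_f s_{bq} = s_{bq}$ via \eqref{eq:proj}, using that $s(f) = r(q)$ (since $pq \in \Gamma^2$ and $f = p$) and that $b \ne 1$ whenever $q = \ol f$ (as then $p = \ol q$, so $b \in \Sigma_q \setminus \Delta_{pq} = \Sigma_q \setminus \{1\}$); and finally $s_{\nu e}^* s_{bq} = \delta_{e,q}\delta_{\nu,b}\, s_e^* s_e$ by \eqref{eq:orth} again. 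This leaves $s_{\sigma e}\us{e}{g^{-1}} s_e^* s_e \us{e}{h} s_{ce}^*$ with $q = e$.

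To finish I would record the commutation fact that $s_e^* s_e$ commutes with each $\us{e}{k}$, which follows from \ref{itm:G2} and its adjoint, since $s_e^* s_e \us{e}{k} = s_e^* u_{r(e),\alpha_e(k)} s_e = \us{e}{k} s_e^* s_e$. Combining this with $s_{\sigma e} s_e^* s_e = s_{\sigma e}$ (the partial-isometry identity for $s_e$) and $\us{e}{g^{-1}}\us{e}{h} = \us{e}{g^{-1}h}$, the product telescopes to $s_{\sigma e}\us{e}{g^{-1}h} s_{ce}^* = \pi(\pointmv{e}{\sigma}{g^{-1}h}{c})$ by Lemma~\ref{lem:univhom}. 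The surviving factors $\delta_{f,p}\delta_{\mu,a}\delta_{e,q}\delta_{\nu,b}$ are precisely the combinatorial condition $\delta_{\mu f \nu e, apbq}$ of \eqref{eq:pmip}, so $\psi(\xi)^*\psi(\eta) = \pi((\xi\mid\eta)_A)$; when the deltas fail both sides vanish.

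Norm-decrease then follows formally: for $\xi$ in the span, $\|\psi(\xi)\|^2 = \|\psi(\xi)^*\psi(\xi)\| = \|\pi((\xi\mid\xi)_A)\| \le \|(\xi\mid\xi)_A\| = \|\xi\|^2$, since the $*$-homomorphism $\pi$ is contractive. Hence $\psi$ extends uniquely to a norm-decreasing linear map on all of $E$, and the inner-product identity persists by continuity of both sides. I expect the main obstacle to be the bookkeeping in the threefold collapse of the middle factors — specifically, checking that the $\Delta_{fe}$-truncation built into $X_{fe}$ is exactly what forces $s_f^* s_f s_{bq} = s_{bq}$ in the diagonal case $q = \ol f$, so that no spurious correction term from \ref{itm:G3} survives.
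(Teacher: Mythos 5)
Your proposal is correct and follows essentially the same route as the paper's proof: define $\psi$ on point masses, verify $\psi(\xi)^*\psi(\eta)=\pi((\xi\mid\eta)_A)$ using the identities \eqref{eq:orth} and \eqref{eq:proj} of Lemma~\ref{lem:complem}, and deduce the norm bound $\|\psi(\xi)\|^2=\|\pi((\xi\mid\xi)_A)\|\le\|\xi\|^2$ to extend by continuity. The only difference is presentational: you make explicit the commutation $s_e^*s_e\,\us{e}{k}=\us{e}{k}\,s_e^*s_e$ (which the paper uses silently) and you absorb the factor $(1-\delta_{bq,1\ol f})$ via the constraint $b\in\Sigma_q\setminus\Delta_{pq}$ rather than via $\nu\in\Sigma_e\setminus\Delta_{fe}$ as the paper does; both are equivalent once $\delta_{\nu e,bq}$ is enforced.
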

\begin{proof}
	For each $fe \in \Gamma^2$ define $\psi_{fe} \colon C_c(X_{fe}) \to C^*(\Gg)$ by $\psi_{fe}(\chare{fe}{\mu}{\nu}{g}{\sigma}) = s_{\mu f}s_{\nu e} \us{e}{g} s_{\sigma e}^*$. Let $\psi :=  \oplus_{fe \in \Gamma^2} \psi_{fe}$ denote the induced map from the algebraic direct sum $\bigoplus_{fe \in \Gamma^2} C_c(X_{fe})$ to $C^*(\Gg)$.
	Fix $fe,\,tr \in \Gamma^2$, $(\mu,\nu,g,\sigma) \in X_{fe}$, and $(a,b,h,c) \in X_{tr}$. It follows from Lemma~\ref{lem:complem}, \ref{itm:G2}, and \eqref{eq:pmip} that
	\begin{align*}
		\psi(\chare{fe}{\mu}{\nu}{g}{\sigma})^*\psi(\chare{tr}{a}{b}{h}{c})
		 & =s_{\sigma e} \us{e}{g^{-1}} s_{\nu e}^* s_{\mu f}^*s_{at} s_{br} \us{r}{h} s_{cr}^*                                  \\
		 & =\delta_{\mu f,at} s_{\sigma e} \us{e}{g^{-1}} s_{\nu e}^* s_{ f}^*s_{f} s_{br} \us{r}{h} s_{cr}^*                    \\
		 & =\delta_{\mu f,at} (1-\delta_{br,1\ol{f}}) s_{\sigma e} \us{e}{g^{-1}} s_{\nu e}^* s_{br} \us{r}{h} s_{cr}^*          \\
		 & =\delta_{\mu f,at} (1-\delta_{br,1\ol{f}}) \delta_{\nu e, br} s_{\sigma e} \us{e}{g^{-1}} s_e^*s_e \us{r}{h} s_{cr}^* \\
		 & =\delta_{\mu f,at} \delta_{\nu e, br} (1-\delta_{\nu e,1\ol{f}})  s_{\sigma e} \us{e}{g^{-1}h}  s_{ce}^*              \\
		 & =\delta_{\mu f,at} \delta_{\nu e, br} (1-\delta_{\nu e,1\ol{f}}) \pi(\pointmv{e}{\sigma}{g^{-1}h}{c})                 \\
		 & = \pi((\chare{fe}{\mu}{\nu}{g}{\sigma}\mid \chare{tr}{a}{b}{h}{c})_A).
	\end{align*}
	Consequently, $\psi(\xi)^* \psi(\eta) = \pi((\xi\mid \eta)_A)$ for all $\xi, \eta \in \bigoplus_{fe \in \Gamma^2} C_c(X_{fe}) $. This in turn gives $\|\psi(\xi)\|^2 = \|\pi((\xi \mid \xi)_A)\| \le \|(\xi \mid \xi)_A\| = \|\xi\|^2$, so that $\psi$ extends to a bounded linear map $\psi\colon E \to C^*(\Gg)$ with the property that $\psi(\xi)^*\psi(\eta) = \pi((\xi\mid \eta)_A)$ for all $\xi,\,\eta \in E$.
\end{proof}

\begin{prop}\label{prop:cpcov}
	The pair $(\pi,\psi)$ defines a Cuntz-Pimsner covariant representation of $E$ in $C^*(\Gg)$. In particular, if  $(i_A,i_{E}) \colon (\varphi,E) \to \Oo_{E}$ is the universal Cuntz-Pimsner covariant representation, then
	there is an induced $*$-homomorphism $\Psi \colon \Oo_{E} \to C^*(\Gg)$ such that $\pi(a) =(\Psi \circ i_{A})(a)$ and $\psi(\xi) = (\Psi \circ i_{E})(\xi)$ for all $a \in A$ and $\xi \in E$.
\end{prop}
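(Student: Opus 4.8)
The plan is to check that $(\pi,\psi)$ satisfies the three axioms of a representation of $(\varphi,E)$ and is Cuntz-Pimsner covariant, and then invoke the universal property of $\Oo_E$ to obtain $\Psi$. Axiom (iii), namely $\psi(\xi)^*\psi(\eta) = \pi((\xi\mid\eta)_A)$, is already supplied by Lemma~\ref{lem:univlinearmap}, so only the two module-compatibility axioms and covariance remain. Since $\pi$ and $\psi$ are bounded and linear and the point masses $\pointmv{e}{\mu}{g}{\nu}$ and $\chare{fe}{\mu}{\nu}{g}{\sigma}$ span dense subspaces of $A$ and $E$, it suffices to verify each identity on point masses and extend by continuity.

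For the right-module axiom $\psi(\xi\cdot a)=\psi(\xi)\pi(a)$, I would expand $\psi(\chare{fe}{\mu}{\nu}{g}{\sigma})\pi(\pointmv{t}{a}{h}{b})$ using the formulas of Lemmas~\ref{lem:univhom} and~\ref{lem:univlinearmap} and collapse the central factor $s_{\sigma e}^*s_{at}$ via \eqref{eq:orth}; this forces $t=e$, $a=\sigma$ and produces a factor $s_e^*s_e$ that is absorbed using $s_es_e^*s_e=s_e$ together with \ref{itm:G2}. Comparing with the right action \eqref{eq:pmright} gives the claim. The left-module axiom $\psi(\varphi(a)\xi)=\pi(a)\psi(\xi)$ is the more delicate of the two: expanding $\pi(\pointmv{t}{a}{h}{b})\psi(\chare{fe}{\mu}{\nu}{g}{\sigma})$ and collapsing $s_{bt}^*s_{\mu f}$ by \eqref{eq:orth} forces $t=f$, $b=\mu$ and leaves a factor $s_f^*s_fs_{\nu e}$, which \eqref{eq:proj} rewrites (the spurious term $\delta_{\nu e,1\ol{f}}$ vanishes since $\nu\in\Sigma_e\setminus\Delta_{fe}$ already excludes it). The essential move is then to push $u_{s(f),\alpha_{\ol{f}}(h)}$ through $s_{\nu e}$ using the carrying identity \eqref{eq:carrying}, which is exactly what converts the action of $h$ on the coset $\nu$ into the cocycle term $c_{fe}(h,\nu)$ via \eqref{eq:cee-eff-ee}; this matches the left action \eqref{eq:pmleft}.

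For Cuntz-Pimsner covariance I would use the frame $(\chare{fe}{\mu}{\nu}{1}{1})$ of Lemma~\ref{lem:compacts}. Since each $\varphi(a)$ is compact, \eqref{eq:compacts} gives $\psi^{(1)}(\varphi(a))=\sum\psi(\varphi(a)\chare{fe}{\mu}{\nu}{1}{1})\psi(\chare{fe}{\mu}{\nu}{1}{1})^*$, and the already-established left-module axiom turns this into $\pi(a)\,P$, where $P=\sum_{fe,\mu,\nu}\psi(\chare{fe}{\mu}{\nu}{1}{1})\psi(\chare{fe}{\mu}{\nu}{1}{1})^*$. The crux is to identify $P$: using $\us{e}{1}=u_{s(e),1}$, $s_{1e}=s_e$, and partial-isometry identities one finds $\psi(\chare{fe}{\mu}{\nu}{1}{1})\psi(\chare{fe}{\mu}{\nu}{1}{1})^*=s_{\mu f}s_{\nu e}s_{\nu e}^*s_{\mu f}^*$, and then summing over $e$ with $r(e)=s(f)$ and $\nu\in\Sigma_e\setminus\Delta_{fe}$ collapses the inner sum to $s_f^*s_f$ by precisely relation \ref{itm:G4}, leaving $P=\sum_{f\in\Gamma^1,\,\mu\in\Sigma_f}s_{\mu f}s_{\mu f}^*$. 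It then remains to show $\pi(a)P=\pi(a)$, which I would check on a generator $\pi(\pointmv{e}{\mu}{g}{\nu})$: collapsing $s_{\nu e}^*s_{\rho f}$ by \eqref{eq:orth} isolates the single surviving term, $s_e^*s_e$ is seen to commute with $\us{e}{g}$ (using \ref{itm:G4} and the carrying identity to see the defining sum for $s_e^*s_e$ is invariant under conjugation by the relevant $U$), and $s_{\mu e}s_e^*s_e=s_{\mu e}$ removes it.

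I expect the main obstacle to be the covariance step, and within it the identification $P=\sum_{f,\mu}s_{\mu f}s_{\mu f}^*$ together with the verification that this projection fixes $\pi(A)$: this is exactly the point at which the Cuntz-Pimsner relation is forced to coincide with the $\Gg$-family relation \ref{itm:G4}, so getting the bookkeeping of $\Delta_{fe}$, the transversals $\Sigma_e$, and the cocycle correct is where the real content lies. Once $(\pi,\psi)$ is known to be a Cuntz-Pimsner covariant representation, the universal property of $\Oo_E$ produces the desired $*$-homomorphism $\Psi$ with $\Psi\circ i_A=\pi$ and $\Psi\circ i_E=\psi$, completing the proof.
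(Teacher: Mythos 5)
Your proposal is correct and takes essentially the same route as the paper's proof: both verify the right- and left-module axioms on point masses via the orthogonality identities of Lemma~\ref{lem:complem} and the carrying identity \eqref{eq:carrying}, establish Cuntz--Pimsner covariance using the frame of Lemma~\ref{lem:compacts} together with \ref{itm:G4}, and conclude by the universal property of $\Oo_E$. The only difference is organisational: the paper computes $(\psi^{(1)}\circ\varphi)(1_{A_f})=\pi(1_{A_f})$ and extends to general $a$ by the left-module property, while you factor out $\pi(a)$ first and identify the residual projection $P$; just note that $P$ must be interpreted edgewise (for $a\in A_f$ only the finitely many terms with first index $f$ survive), since the full sum over $\Gamma^2$ need not converge in norm.
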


\begin{proof}
	Given Lemma~\ref{lem:univhom} and Lemma~\ref{lem:univlinearmap} all that remains is to show that $(\pi,\psi)$ respects both the right and left actions of $A$ on $E$, and satisfies Cuntz-Pimsner covariance.
	Using Lemma~\ref{lem:complem}, \ref{itm:G2}, and \eqref{eq:pmright} we compute
	\begin{align*}
		\psi(\chare{fe}{\mu}{\nu}{g}{\sigma}) \pi(\pointmv{t}{a}{h}{b})
		 & = (s_{\mu f} s_{\nu e} \us{e}{g} s_{\sigma e}^*)(s_{at}u_{s(t),\alpha_{\ol{t}}(h)} s_{bt}^*) \\
		 & = \delta_{\sigma e, at} s_{\mu f} s_{\nu e} \us{e}{gh}  s_{be}^*                             \\
		 & = \delta_{\sigma e, at} \psi(\chare{fe}{\mu}{\nu}{gh}{b})                                    \\
		 & = \psi(\chare{fe}{\mu}{\nu}{g}{\sigma} \cdot \pointmv{t}{a}{h}{b}).
	\end{align*}
	It follows that $\psi(\xi)\pi(a) = \psi(\xi \cdot a)$ for all $\xi \in E$ and $a \in A$.
	For the left action we use Lemma~\ref{lem:carrying} and \eqref{eq:pmleft} to see that
	\begin{align*}
		\pi(\pointmv{t}{a}{h}{b})  \psi(\chare{fe}{\mu}{\nu}{g}{\sigma})
		 & = (s_{at}u_{s(t),\alpha_{\ol{t}}(h)} s_{bt}^*) (s_{\mu f} s_{\nu e} \us{e}{g} s_{\sigma e}^*)   \\
		 & =\delta_{bt,\mu f} s_{af}u_{s(f),\alpha_{\ol{f}}(h)}s_f^*s_f s_{\nu e} \us{e}{g} s_{\sigma e}^* \\
		 & =\delta_{bt,\mu f} s_{af}u_{s(f),\alpha_{\ol{f}}(h)} s_{\nu e} \us{e}{g} s_{\sigma e}^*         \\
		 & =\delta_{bt,\mu f} s_{af} s_{(h \cdot \nu) e} \us{e}{c_{fe}(h,\nu)g} s_{\sigma e}^*             \\
		 & =\delta_{bt,\mu f} \psi(\chare{fe}{\mu}{h \cdot \nu}{c_{fe}(h,\nu)g}{\sigma})                   \\
		 & =  \psi\big(\varphi(\pointmv{t}{a}{h}{b})  \chare{fe}{\mu}{\nu}{g}{\sigma}\big).
	\end{align*}
	Hence, $\pi(a) \psi(\xi) = \psi(\varphi(a) \xi)$ for all $a \in A$ and $\xi \in E$.
	Consequently, $(\pi,\psi)$ is a representation of the $C^*$-correspondence $(\varphi,E)$ in $C^*(\Gg)$.

	We now claim that $(\pi,\psi)$ is Cuntz-Pimsner covariant.  Using \ref{itm:G4} and Lemma~\ref{lem:compacts} we compute
	\begin{align*}
		(\psi^{(1)} \circ \varphi)(1_{A_f})
		 & = \sum_{\mu \in \Sigma_f} \sum_{\substack{\substack{r(e)= s(f)}               \\\nu \in \Sigma_e \setminus \Delta_{fe}}}  \psi(\chare{fe}{\mu}{\nu}{1}{1}) \psi(\chare{fe}{\mu}{\nu}{1}{1})^*\\
		 & =\sum_{\mu \in \Sigma_f} \sum_{\substack{\substack{r(e)= s(f)}                \\\nu \in \Sigma_e \setminus \Delta_{fe}}}   s_{\mu f}s_{\nu e} s_{e}^* s_e s_{\nu e}^* s_{\mu f}^* \\
		 & =\sum_{\mu \in \Sigma_f} s_{\mu f} \Big(\sum_{\substack{\substack{r(e)= s(f)} \\\nu \in \Sigma_e \setminus \Delta_{fe}}}   s_{\nu e} s_{\nu e}^*\Big) s_{\mu f}^* \\
		 & =\sum_{\substack{\mu \in \Sigma_f}}  s_{\mu f}s_f^*s_f s_{\mu f}^*            \\
		 & = \sum_{\substack{\mu \in \Sigma_f}}  s_{\mu f} s_{\mu f}^*                   \\
		 & = \pi \Big(\sum_{\mu \in \Sigma_f} \pointmv{e}{\mu}{1}{\mu}  \Big)            \\
		 & = \pi (1_{A_f}).
	\end{align*}
	With  \eqref{eq:compacts} the preceding computation can be modified to show that $(\psi^{(1)} \circ \varphi)(a) = \pi(a)$ for all $a \in \Aa_f$ and therefore all $a \in A_f$. Since each $a \in A$ can be approximated by a finite sum of $a_e \in A_e$, it follows that $(\psi^{(1)} \circ \varphi)(a) = \pi(a)$ for all $a \in A$.
	The universal property of $\Oo_{E}$ now induces a $*$-homomorphism $\Psi \colon \Oo_{E} \to C^*(\Gg)$ such that $\pi(a) =(\Psi \circ i_{A})(a)$ and $\psi(\xi) = (\Psi \circ i_{E})(\xi)$ for all $a \in A$ and $\xi \in E$.
\end{proof}

To see that $\Psi$ is an isomorphism we construct an inverse $\Phi$.
To this end, we identify a $\Gg$-family $\{V_{x},T_e \mid x \in \Gamma^0, e \in \Gamma^1\}$ within $\Oo_E$ and then use the universal property of $C^*(\Gg)$ to obtain a $*$-homomorphism $\Phi \colon C^*(\Gg) \to \Oo_E$ that is inverse to $\Psi$.

\begin{lem}\label{lem:partialiso}
	Let $(i_A,i_E) \colon E \to \Oo_E$ denote the universal Cuntz-Pimsner covariant representation. For each $f \in \Gamma^1$ let
	\begin{equation}
		T_{f} := \sum_{\substack{\substack{r(e)= s(f)}\\\mu \in \Sigma_e \setminus \Delta_{fe}}} i_E\big(\chare{fe}{1}{\mu}{1}{\mu}\big).
	\end{equation}
	Then $T_f$ is a partial isometry in $\Oo_E$ with source and range projections given by
	\begin{equation}\label{eq:rangesource}
		T_{f}^*T_{f} =  \sum_{\substack{\substack{r(e)= s(f)}\\\mu \in \Sigma_e \setminus \Delta_{fe}}}i_A(\pointmv{e}{\mu}{1}{\mu})
		\qquad\text{and}\qquad
		T_fT_f^* = i_A(\pointmv{f}{1}{1}{1}).
	\end{equation}
\end{lem}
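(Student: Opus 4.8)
The plan is to compute the two products $T_f^*T_f$ and $T_fT_f^*$ directly from the defining relations of the universal Cuntz-Pimsner covariant representation $(i_A,i_E)$, and then deduce that $T_f$ is a partial isometry from the fact that its source projection $T_f^*T_f$ is a projection. Throughout I write $i_E^{(1)}\colon \End_A^0(E)\to\Oo_E$ for the $*$-homomorphism induced by $(i_A,i_E)$, so that $i_E(\xi)i_E(\eta)^*=i_E^{(1)}(\Theta_{\xi,\eta})$ and Cuntz-Pimsner covariance reads $(i_E^{(1)}\circ\varphi)(a)=i_A(a)$.

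First I would expand
\[
	T_f^*T_f = \sum_{\substack{r(e)=s(f),\,r(e')=s(f)\\ \mu\in\Sigma_e\setminus\Delta_{fe},\,\mu'\in\Sigma_{e'}\setminus\Delta_{fe'}}} i_E\big(\chare{fe}{1}{\mu}{1}{\mu}\big)^* i_E\big(\chare{fe'}{1}{\mu'}{1}{\mu'}\big)
\]
and apply the representation identity $i_E(\xi)^*i_E(\eta)=i_A((\xi\mid\eta)_A)$ together with the inner-product formula \eqref{eq:pmip}. The Kronecker factor $\delta_{f\mu e,\,f\mu'e'}$ forces $e=e'$ and $\mu=\mu'$, so all cross terms vanish and the surviving diagonal (where $g^{-1}h=1$) gives $T_f^*T_f=\sum_{r(e)=s(f),\,\mu\in\Sigma_e\setminus\Delta_{fe}} i_A(\pointmv{e}{\mu}{1}{\mu})$, as claimed. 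Since each $\pointmv{e}{\mu}{1}{\mu}$ is a projection in $A_e$ and these are mutually orthogonal across distinct pairs $(e,\mu)$, their image under the $*$-homomorphism $i_A$ is a projection; hence $T_f$ is a partial isometry with the stated source projection.

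For the range projection I would expand $T_fT_f^*$ the same way, now using $i_E(\xi)i_E(\eta)^*=i_E^{(1)}(\Theta_{\xi,\eta})$ and reducing the rank-one operators with \eqref{eq:pmcpt}; the resulting factor $\delta_{\mu e,\mu'e'}$ again kills the off-diagonal terms, leaving $T_fT_f^*=\sum_{r(e)=s(f),\,\mu\in\Sigma_e\setminus\Delta_{fe}} i_E^{(1)}\big(\Theta_{\chare{fe}{1}{\mu}{1}{\mu},\,\chare{fe}{1}{\mu}{1}{\mu}}\big)$. To finish I would identify the operator inside the sum with $\varphi(\pointmv{f}{1}{1}{1})$: applying Lemma~\ref{lem:compacts} to $a=\pointmv{f}{1}{1}{1}$ and evaluating the left action via \eqref{eq:pmleft}, where $\varphi(\pointmv{f}{1}{1}{1})\chare{fe}{\mu}{\nu}{1}{1}=\delta_{\mu,1}\chare{fe}{1}{\nu}{1}{1}$ (using $1\cdot\nu=\nu$ and $c_{fe}(1,\nu)=1$), yields $\varphi(\pointmv{f}{1}{1}{1})=\sum_{r(e)=s(f),\,\nu\in\Sigma_e\setminus\Delta_{fe}}\Theta_{\chare{fe}{1}{\nu}{1}{1},\,\chare{fe}{1}{\nu}{1}{1}}$. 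Cuntz-Pimsner covariance then gives $T_fT_f^*=i_E^{(1)}(\varphi(\pointmv{f}{1}{1}{1}))=i_A(\pointmv{f}{1}{1}{1})$.

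The one genuine subtlety — and the step I expect to be the main obstacle — is that the two expressions for the range projection use rank-one operators on different vectors: the expansion of $T_fT_f^*$ produces $\Theta_{\chare{fe}{1}{\mu}{1}{\mu},\chare{fe}{1}{\mu}{1}{\mu}}$ (final coordinate $\sigma=\mu$), whereas the frame expansion of $\varphi(\pointmv{f}{1}{1}{1})$ produces $\Theta_{\chare{fe}{1}{\nu}{1}{1},\chare{fe}{1}{\nu}{1}{1}}$ (final coordinate $\sigma=1$). Under the identification of Lemma~\ref{lem:Eidentify}, both vectors lie in the single copy of $F_e^*$ indexed by $(1,\mu)\in\Sigma_f\times(\Sigma_e\setminus\Delta_{fe})$; within that copy they are the point masses $\epsilon_{1,\mu}^e$ and $\epsilon_{1,1}^e$ respectively, each of which — exactly as in the proof of Lemma~\ref{lem:compacts} for $\sigma=1$ — is a single-element frame for $F_e^*$, so $\Theta_{\epsilon_{1,\sigma}^e,\epsilon_{1,\sigma}^e}=\id_{F_e^*}$ independently of $\sigma$. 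Thus both rank-one operators equal the orthogonal projection onto the $(1,\mu)$-summand of $E$, the two sums agree term-by-term, and the identification of $T_fT_f^*$ with $i_A(\pointmv{f}{1}{1}{1})$ goes through.
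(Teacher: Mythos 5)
Your proof is correct and takes essentially the same route as the paper's: kill the cross terms in $T_f^*T_f$ and $T_fT_f^*$ via \eqref{eq:pmip} and \eqref{eq:pmcpt}, then identify $T_fT_f^*$ with $i_A(\pointmv{f}{1}{1}{1})$ using the frame expansion \eqref{eq:compacts} together with Cuntz--Pimsner covariance. The subtlety you isolate---that $\Theta_{\chare{fe}{1}{\mu}{1}{\sigma},\chare{fe}{1}{\mu}{1}{\sigma}}$ is the projection onto the $(1,\mu)$-summand of $E$ independently of the final coordinate $\sigma$, since $\epsilon^e_{1,\sigma}$ is a single-element frame for $F_e^*$ for every $\sigma$---is precisely what the paper uses implicitly when it re-indexes its frame in the displayed computation of $T_fT_f^*$, so your explicit verification of this point makes the same argument strictly more careful.
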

\begin{proof}
	The first identity of \eqref{eq:rangesource} follows from \eqref{eq:pmip} as for all $e,t \in r^{-1}(s(f))$, $\mu \in \Sigma_e$ and $\nu \in \Sigma_t$,
	\[
		i_E(\chare{fe}{1}{\mu}{1}{\mu})^*	i_E(\chare{ft}{1}{\nu}{1}{\nu}) = i_A((\chare{fe}{1}{\mu}{1}{\mu} \mid \chare{ft}{1}{\nu}{1}{\nu} )_A) = \delta_{\mu e,\nu t}\, i_A(\pointmv{e}{\mu}{1}{\mu}).
	\]
	For the second identity we use \eqref{eq:pmcpt} to see that for all $e,\,t \in r^{-1}(s(f))$, $\mu \in \Sigma_e$ and $\nu \in \Sigma_t$,
	\[
		i_E(\chare{fe}{1}{\mu}{1}{\mu})	i_E(\chare{ft}{1}{\nu}{1}{\nu})^* = i_E^{(1)}\Big(\Theta_{\chare{fe}{1}{\mu}{1}{\mu},\chare{ft}{1}{\nu}{1}{\nu}}\Big)
		= \delta_{\mu e,\nu t} \, i_E^{(1)} \Big(\Theta_{\chare{fe}{1}{\mu}{1}{\mu},\chare{fe}{1}{\mu}{1}{\mu}}\Big).
	\]
	It now follows from Cuntz-Pimsner covariance of $(i_A,i_E)$, \eqref{eq:pmleft}, and \eqref{eq:compacts} that
	\begin{align*}
		T_{f}T_{f}^* & = \sum_{\substack{\substack{r(e)= s(f)}                        \\\mu \in \Sigma_e \setminus \Delta_{fe}}} \Big(\Theta_{\chare{fe}{1}{\mu}{1}{\mu},\chare{fe}{1}{\mu}{1}{\mu}}\Big)\\
		             & = \sum_{\nu \in \Sigma_f}\sum_{\substack{\substack{r(e)= s(f)} \\\mu \in \Sigma_e \setminus \Delta_{fe}}} \Big(\Theta_{\varphi_{fe}(\pointmv{f}{1}{1}{1})\chare{fe}{1}{\nu}{1}{\mu},\chare{fe}{1}{\nu}{1}{\mu}}\Big)\\
		             & = (i_E^{(1)} \circ \varphi)(\pointmv{f}{1}{1}{1})              \\
		             & = i_A(\pointmv{f}{1}{1}{1}). \qedhere
	\end{align*}
\end{proof}

Our aim now is to establish partial unitary representations of $G_x$ in $\Oo_E$ for each $x \in \Gamma^0$. To accomplish this, let $x \in \Gamma^0$ and $g \in G_x$ and consider the element of $\Oo_E$ defined by,
\begin{equation}
	V_{x,g}
	:= \sum_{\substack{r(f)=x\\\mu \in \Sigma_f}} i_A\big(\pointmv{f}{g \cdot \mu}{c_{f}(g,\mu)}{\mu}\big).
	\label{eq:vee}
\end{equation}
Note that when $g = 1$ we have $V_{x,1} = \sum_{r(f) = x} i_A(1_{A_f})$.

\begin{lem}\label{lem:partialunit}
	For each $x \in \Gamma^0$ the map $G_x \ni g \mapsto V_{x,g}$ defines a partial unitary representation of $G_x$ in $\Oo_E$.
\end{lem}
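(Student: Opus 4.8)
The plan is to verify the three defining properties of a partial unitary representation directly from the point-mass calculus of Lemma~\ref{lem:Eidentify}, using that $i_A$ is a $*$-homomorphism so that it suffices to compute in $A$. Concretely I will establish (i) $V_{x,g}V_{x,h} = V_{x,gh}$, (ii) $V_{x,g}^* = V_{x,g^{-1}}$, and (iii) $V_{x,1}$ is a projection. Together these show that each $V_{x,g}$ is a partial isometry with common initial and final projection $V_{x,g}^*V_{x,g} = V_{x,g}V_{x,g}^* = V_{x,1}$, which is exactly the assertion.

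First I would record two elementary consequences of Section~\ref{sec:cocycles}: that $c_f(1,\mu) = 1$, immediate from \eqref{eq:cocyclee}, and hence, applying the cocycle identity \eqref{eq:cocycle} to the product $gg^{-1}=1$, that $c_f(g^{-1},\nu) = c_f(g, g^{-1}\cdot\nu)^{-1}$. I would also use that $g\cdot(h\cdot\nu) = (gh)\cdot\nu$ for the canonical action on $\Sigma_f$.

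The heart of the argument is the product computation. Expanding $V_{x,g}V_{x,h}$ as a double sum over edges $f,e$ with $r(f)=r(e)=x$ and over $\mu\in\Sigma_f$, $\nu\in\Sigma_e$, I apply the point-mass multiplication \eqref{eq:pmmult} to each term $\pointmv{f}{g\cdot\mu}{c_f(g,\mu)}{\mu}\,\pointmv{e}{h\cdot\nu}{c_e(h,\nu)}{\nu}$. The Kronecker delta $\delta_{\mu f,(h\cdot\nu)e}$ forces $e=f$ and $\mu = h\cdot\nu$; since both $\mu$ and $h\cdot\nu$ lie in the transversal $\Sigma_f$, this is literal equality, so for each fixed $\nu$ exactly one $\mu$ survives and the surviving term is $\pointmv{f}{g\cdot(h\cdot\nu)}{c_f(g,h\cdot\nu)\,c_f(h,\nu)}{\nu}$. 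The cocycle identity \eqref{eq:cocycle} collapses $c_f(g,h\cdot\nu)\,c_f(h,\nu) = c_f(gh,\nu)$, and the action identity rewrites $g\cdot(h\cdot\nu) = (gh)\cdot\nu$, so the term becomes $\pointmv{f}{(gh)\cdot\nu}{c_f(gh,\nu)}{\nu}$; summing over $r(f)=x$ and $\nu\in\Sigma_f$ yields precisely $V_{x,gh}$. This is the step I expect to be the main obstacle, since it requires carefully tracking the index constraints imposed by the delta and invoking the cocycle identity in exactly the right place; the remaining steps are formal.

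For (ii), I would use that the adjoint of a point mass in the groupoid algebra $A_f$ is $(\pointmv{f}{\mu}{g}{\nu})^* = \pointmv{f}{\nu}{g^{-1}}{\mu}$, whence $V_{x,g}^* = \sum_{r(f)=x,\,\mu\in\Sigma_f} i_A(\pointmv{f}{\mu}{c_f(g,\mu)^{-1}}{g\cdot\mu})$. Reindexing by $\nu = g\cdot\mu$, a bijection of $\Sigma_f$, and using $c_f(g^{-1},\nu) = c_f(g,g^{-1}\cdot\nu)^{-1}$ identifies this sum with $V_{x,g^{-1}}$. Finally, for (iii), $V_{x,1} = \sum_{r(f)=x} i_A(1_{A_f})$ is self-adjoint by (ii) and idempotent by (i) with $g=h=1$, hence a projection. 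Combining (i) and (ii) then gives $V_{x,g}^*V_{x,g} = V_{x,g^{-1}}V_{x,g} = V_{x,1} = V_{x,g}V_{x,g^{-1}} = V_{x,g}V_{x,g}^*$, so $g\mapsto V_{x,g}$ is a partial unitary representation of $G_x$ in $\Oo_E$.
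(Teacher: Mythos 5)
Your proposal is correct and follows essentially the same route as the paper's proof: the multiplicativity $V_{x,g}V_{x,h}=V_{x,gh}$ via the point-mass product \eqref{eq:pmmult} (with the Kronecker delta forcing $e=f$, $\mu=h\cdot\nu$) and the cocycle identity \eqref{eq:cocycle}, and the adjoint formula $V_{x,g}^*=V_{x,g^{-1}}$ via the reindexing $\nu=g\cdot\mu$ together with $c_f(g,g^{-1}\cdot\nu)c_f(g^{-1},\nu)=1$. Your step (iii) merely makes explicit the conclusion the paper summarises with ``Consequently, $V_{x,g}$ is a partial unitary.''
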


\begin{proof}
	Fix $x \in \Gamma^0$ and  $g,h \in G_x$. The cocycle condition \eqref{eq:cocycle} implies that for all $e,f \in r^{-1}(x)$, $\mu \in \Sigma_f$ and $\nu \in \Sigma_e$,
	\[
		\pointmv{f}{g \cdot \mu}{c_f(g,\mu)}{\mu} \pointmv{e}{h \cdot \nu}{c_e(h,\nu)}{\nu} = \delta_{f,e} \delta_{\mu, h \cdot \nu} \pointmv{f}{g\cdot (h \cdot \nu)}{c_{f}(g,h \cdot \nu)c_f(h,\nu)}{\nu} = \delta_{f,e} \delta_{\mu, h \cdot \nu} \pointmv{f}{(gh) \cdot \nu}{c_f(gh,\nu)}{\nu}.
	\]
	As such, $V_{x,g}V_{x,h} = V_{x,gh}$.  Making the substitution $\mu = g^{-1} \cdot \nu$ at the second equality, and using the cocycle condition \eqref{eq:cocycle} to see that $c_f(g,g^{-1}\cdot \nu)c_f(g^{-1},\nu) = 1$, we have
	\begin{align}\label{eq:unitadjoint}
		\begin{split}
			V_{x,g}^*
			&= \sum_{\substack{r(f)=x\\\mu \in \Sigma_f}} i_A(\pointmv{f}{\mu}{c_f(g,\mu)^{-1}}{g \cdot \mu}) \\
			&= \sum_{\substack{r(f)=x\\\nu \in \Sigma_f}} i_A(\pointmv{f}{ g^{-1} \cdot \nu}{c_f(g,g^{-1}\cdot \nu)^{-1}}{\nu})\\
			&= \sum_{\substack{r(f)=x\\\nu \in \Sigma_f}} i_A(\pointmv{f}{ g^{-1}\cdot \nu}{c_f(g^{-1},\nu)}{ \nu})  \\
			&= V_{x,g^{-1}}.
		\end{split}
	\end{align}
	Consequently, $V_{x,g}$ is a partial unitary and $g \mapsto V_{x,g}$ is a partial unitary representation of $G_x$.
\end{proof}

\begin{prop}
	The collection $\{V_{x},T_e \mid x \in \Gamma^0, e \in \Gamma^1\}$ defines a $\Gg$-family in $\Oo_E$. In particular, there is a unique $*$-homomorphism $\Phi\colon C^*(\Gg) \to \Oo_E$ such that $\Phi(s_e) = T_e$ for all $e \in \Gamma^1$, and $\Phi(u_{x,g}) = V_{x,g}$ for all $x \in \Gamma^0$ and $g \in G_x$.
\end{prop}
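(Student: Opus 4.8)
The plan is to verify directly that the collection $\{V_x, T_e \mid x\in\Gamma^0,\ e\in\Gamma^1\}$ satisfies the four defining relations \ref{itm:G1}--\ref{itm:G4} of a $\Gg$-family; the existence and uniqueness of the $*$-homomorphism $\Phi$ with $\Phi(s_e)=T_e$ and $\Phi(u_{x,g})=V_{x,g}$ then follow at once from the universal property of $C^*(\Gg)$. That each $T_e$ is a partial isometry is exactly Lemma~\ref{lem:partialiso}, and that each $g\mapsto V_{x,g}$ is a partial unitary representation is Lemma~\ref{lem:partialunit}, so only the four relations remain. Throughout I would work inside $\Oo_E$ using the point-mass identities \eqref{eq:pmmult}--\eqref{eq:pmleft}, the representation properties $i_A(a)i_E(\xi)=i_E(\varphi(a)\xi)$ and $i_E(\xi)i_A(a)=i_E(\xi\cdot a)$, Cuntz-Pimsner covariance of $(i_A,i_E)$, the source/range formulas \eqref{eq:rangesource}, and the cocycle normalisations coming from \eqref{eq:cocycle}.

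Relations \ref{itm:G1} and \ref{itm:G3} are the most direct. For \ref{itm:G1}, since $V_{x,1}=\sum_{r(f)=x}i_A(1_{A_f})$ and the units $1_{A_f}$ are mutually orthogonal central projections in $A=\bigoplus_e A_e$, the product $V_{x,1}V_{y,1}$ vanishes when $x\neq y$ because no edge has range both $x$ and $y$. For \ref{itm:G3}, I would substitute the two formulas of \eqref{eq:rangesource}: writing $1_{A_t}=\sum_{\mu\in\Sigma_t}\pointmv{t}{\mu}{1}{\mu}$ and using $r(\ol{e})=s(e)$, the summand $T_{\ol{e}}T_{\ol{e}}^*=i_A(\pointmv{\ol{e}}{1}{1}{1})$ supplies exactly the diagonal entry $t=\ol{e}$, $\mu=1$ that is deleted from $T_e^*T_e$ by the index set $\Sigma_t\setminus\Delta_{et}$, so that $T_e^*T_e+T_{\ol{e}}T_{\ol{e}}^*=\sum_{r(t)=s(e)}i_A(1_{A_t})=V_{s(e),1}$.

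For \ref{itm:G4} I would first compute, for $r(f)=s(e)$ and $\mu\in\Sigma_f$, the conjugate $V_{s(e),\mu}\,T_fT_f^*\,V_{s(e),\mu}^{*}$. Using $T_fT_f^*=i_A(\pointmv{f}{1}{1}{1})$, the multiplication rule \eqref{eq:pmmult}, the adjoint identity $V_{s(e),\mu}^*=V_{s(e),\mu^{-1}}$ from \eqref{eq:unitadjoint}, and the normalisations $c_f(\mu,1)=1=c_f(\mu^{-1},\mu)$ (both instances of the cocycle identity \eqref{eq:cocycle}), the expression collapses to $i_A(\pointmv{f}{\mu}{1}{\mu})$. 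Summing over $r(f)=s(e)$ and $\mu\in\Sigma_f$ subject to the restriction $\mu f\neq 1\ol{e}$ then deletes precisely the $f=\ol{e}$, $\mu=1$ term, which is exactly the term excluded by $\Sigma_f\setminus\Delta_{ef}$ in the formula \eqref{eq:rangesource} for $T_e^*T_e$; hence the two sides agree.

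The main obstacle is \ref{itm:G2}, i.e.\ matching $V_{r(e),\alpha_e(g)}T_e$ with $T_eV_{s(e),\alpha_{\ol{e}}(g)}$ for $g\in G_e$. On the left I would use $i_A(a)i_E(\xi)=i_E(\varphi(a)\xi)$ together with \eqref{eq:pmleft}; the delta there forces the surviving summand of $V_{r(e),\alpha_e(g)}$ to have edge $f=e$ and transversal $\mu=1$, and the normalisation $c_e(\alpha_e(g),1)=g$ reduces the left side to $\sum_{r(t)=s(e),\,\nu}i_E(\chare{et}{1}{\alpha_{\ol{e}}(g)\cdot\nu}{c_{et}(g,\nu)}{\nu})$, the sum over $\nu\in\Sigma_t\setminus\Delta_{et}$. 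On the right I would use $i_E(\xi)i_A(a)=i_E(\xi\cdot a)$ and \eqref{eq:pmright}, obtaining the same point masses but indexed by $\rho=\alpha_{\ol{e}}(g)^{-1}\cdot\nu$. The two expressions agree after reindexing the transversal sum by $\nu\mapsto\alpha_{\ol{e}}(g)^{-1}\cdot\nu$ and using \eqref{eq:cee-eff-ee} to rewrite $c_{et}(g,\cdot)=c_t(\alpha_{\ol{e}}(g),\cdot)$. The delicate point is that this substitution must be a genuine bijection of $\Sigma_t\setminus\Delta_{et}$; for $t\neq\ol{e}$ this is automatic, while for $t=\ol{e}$ it relies on the fact that $\alpha_{\ol{e}}(g)$ fixes the identity coset and hence permutes $\Sigma_{\ol{e}}\setminus\{1\}=\Sigma_t\setminus\Delta_{et}$, precisely the invariance noted in Section~\ref{sec:cocycles}. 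Once \ref{itm:G1}--\ref{itm:G4} are verified, the universal property of $C^*(\Gg)$ yields the unique $*$-homomorphism $\Phi$.
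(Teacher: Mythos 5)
Your proposal is correct and takes essentially the same route as the paper's own proof: verify \ref{itm:G1}--\ref{itm:G4} for $\{V_x,T_e\}$ using Lemmas~\ref{lem:partialiso} and~\ref{lem:partialunit}, the point-mass identities \eqref{eq:pmmult}--\eqref{eq:pmleft}, the formulas \eqref{eq:rangesource} and \eqref{eq:unitadjoint}, and the cocycle normalisations $\alpha_f(g)\cdot 1 = 1$, $c_f(\alpha_f(g),1)=g$, $c_f(\mu,1)=1$, then invoke the universal property of $C^*(\Gg)$. In particular your handling of \ref{itm:G2}, including the key observation that $\alpha_{\ol{f}}(g)$ preserves $\Sigma_t\setminus\Delta_{ft}$ so the reindexing of the transversal sum is a genuine bijection, is exactly the paper's closing step.
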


\begin{proof}
	We will show that $\{V_{x},T_e \mid x \in \Gamma^0, e \in \Gamma^1\}$ satisfies \ref{itm:G1}--\ref{itm:G4} and then use the universal property of $C^*(\Gg)$ to give $\Phi$. The identity \ref{itm:G1} is follows immediately from direct sum decomposition of $A$: if $x \ne y \in \Gamma^0$ and $e,f \in \Gamma^1$ are such that $r(e) = x$ and $r(f) = y$ then $\pointmv{e}{\mu}{1}{\mu}\pointmv{f}{\nu}{1}{\nu} = 0$ for any $\mu \in \Sigma_e$ and $\nu \in \Sigma_f$. The identity \eqref{eq:rangesource} yields \ref{itm:G3}.
	Both \eqref{eq:rangesource} and \eqref{eq:unitadjoint} imply that for $f \in \Gamma^1$ and $\mu \in \Sigma_f$,
	\begin{align*}
		V_{r(f),\mu}T_f T_f^* V_{r(f),\mu}^* & =
		\sum_{\substack{r(e)=r(f)                                                                                                \\\nu \in \Sigma_e}}\sum_{\substack{r(t)=r(f)\\\sigma \in \Sigma_t}} i_A(\pointmv{e}{\mu \cdot \nu}{c_e(\mu,\nu)}{\nu}) i_A(\pointmv{f}{1}{1}{1}) i_A(\pointmv{t} {\sigma}{c_e(\mu,\sigma)^{-1}}{\mu \cdot \sigma})\\
		                                     & =i_A(\pointmv{f}{\mu}{1}{1}) i_A(\pointmv{f}{1}{1}{1})i_A(\pointmv{f}{1}{1}{\mu}) \\
		                                     & = i_A(\pointmv{f}{\mu}{1}{\mu}).
	\end{align*}
	The identity \ref{itm:G4} now follows from \eqref{eq:rangesource}.

	We now claim that \ref{itm:G2} is satisfied. For each $f \in \Gamma^1$  and $g \in G_f$,
	\begin{align*}
		V_{r(f),\alpha_f(g)} T_f & = \sum_{\substack{r(e)=r(f)   \\\nu \in \Sigma_e}}
		\sum_{\substack{r(t) = s(f)                              \\ \mu \in \Sigma_t \setminus \Delta_{ft}}}
		i_A(\pointmv{e}{\alpha_f(g) \cdot \nu}{c_{e}(\alpha_f(g),\nu) }{\nu})
		i_E(\chare{ft}{1}{\mu}{1}{\mu})                          \\
		                         & = \sum_{\substack{r(t) = s(f) \\ \mu \in \Sigma_t \setminus \Delta_{ft}}}	i_E(\varphi(\pointmv{f}{\alpha_f(g) \cdot 1}{c_{f}(\alpha_f(g),1)}{1})\chare{ft}{1}{\mu}{1}{\mu})\\
		                         & = \sum_{\substack{r(t) = s(f) \\ \mu \in \Sigma_t \setminus \Delta_{ft}}}	i_E(\varphi(\pointmv{f}{1}{g}{1})\chare{ft}{1}{\mu}{1}{\mu})\\
		                         & = \sum_{\substack{r(t) = s(f) \\ \mu \in \Sigma_t \setminus \Delta_{ft}}}
		i_E(\chare{ft}{1}{\alpha_{\ol{f}}(g) \cdot \mu}{c_{ft}(g,\mu)}{\mu}),
	\end{align*}
	where we have used the fact that $\alpha_f(g) \cdot 1 = 1$ and $c_{f}(\alpha_f(g),\nu) = g$.
	On the other hand,
	\begin{align*}
		T_f 	V_{s(f),\alpha_{\ol{f}}(g)}
		 & =\sum_{\substack{r(t) = s(f)                                                                                                                       \\ \mu \in \Sigma_t \setminus \Delta_{ft}}} \sum_{\substack{r(e)=r(f)\\\nu \in \Sigma_e}}
		i_E(\chare{ft}{1}{\mu}{1}{\mu})	i_A(\pointmv{e}{\alpha_{\ol{f}}(g) \cdot \nu}{c_{e}(\alpha_{\ol{f}}(g),\nu) }{\nu})                                    \\
		 & =\sum_{\substack{r(t) = s(f)                                                                                                                       \\ \alpha_{\ol{f}}(g) \cdot \nu \in \Sigma_t \setminus \Delta_{ft}}}
		i_E(\chare{ft}{1}{\alpha_{\ol{f}}(g) \cdot \nu}{1}{\alpha_{\ol{f}}(g) \cdot \nu} \cdot \pointmv{t}{\alpha_{\ol{f}}(g) \cdot \nu}{c_{ft}(g,\nu)}{\nu}) \\
		 & =
		\sum_{\substack{r(t) = s(f)                                                                                                                           \\ \alpha_{\ol{f}}(g) \cdot \nu \in \Sigma_t \setminus \Delta_{ft}}}
		i_E(\chare{ft}{1}{\alpha_{\ol{f}}(g) \cdot \nu}{c_{ft}(g,\nu)}{\nu}).                                                                                 \\
	\end{align*}
	After observing that $\alpha_{\ol{f}}(g) \cdot \nu \in \Sigma_t \setminus \Delta_{ft}$ if and only if $\nu \in \Sigma_t \setminus \Delta_{ft}$ we see that \ref{itm:G2} is satisfied. The existence of $\Phi$ now follows immediately from the universal property of $C^*(\Gg)$.
\end{proof}

We can now prove the main result of this section.
\begin{proof}[Proof of Theorem {\ref{thm:isomorphism}}] 	We claim that $\Psi$ and $\Phi$ are mutually inverse isomorphisms. Recall that $\Psi \circ i_A = \pi$ and $\Psi \circ i_E = \psi$.
	It follows that for each $f \in \Gamma^1$ we have,
	\[
		(\Psi \circ \Phi) (s_f) = \Psi(T_{f}) = {\sum_{\substack{r(e) = s(f) \\ \mu \in \Sigma_e \setminus \Delta_{fe}} }} s_f s_{\mu e} s_{\mu e}^*= s_f s_f^* s_f = s_f,
	\]
	and for each $x \in \Gamma^0$ and $g \in G_x$,
	\begin{align*}
		(\Psi \circ \Phi) (u_{x,g}) & = \Psi(V_{x,g})
		= \sum_{\substack{r(e)=x                                            \\\mu \in \Sigma_f}} s_{(g \cdot \mu)e}u_{s(e),c_e(g,\mu)} s_{\mu e}
		= \sum_{\substack{r(e)=x                                            \\\mu \in \Sigma_f}} u_{r(e),g} s_{\mu e} s_{\mu e}^* \\
		                            & = u_{x,g} \Big(\sum_{\substack{r(e)=x \\\mu \in \Sigma_f}}    s_{\mu e} s_{\mu e}^*\Big)
		= u_{x,g}.
	\end{align*}
	Now suppose that $(\mu,g,\nu) \in \Sigma_e \times G_e \times \Sigma_e$. Since $\{T_e \mid e \in \Gamma^1\} \cup \{V_x \mid x \in \Gamma^0\}$ forms a $\Gg$-family,
	\begin{align*}
		(\Phi \circ \Psi)(i_A(\pointmv{e}{\mu}{g}{\nu})) & = \Phi (s_{\mu e}u_{s(e), \alpha_{\ol{e}} (g)} s_{\nu e}^* )
		= T_{\mu e} V_{s(e),\alpha_{\ol{e}}(g)} T_{\nu e}^*
		= V_{r(e),\mu\alpha_e(g)}T_{e} T_{e}^*V_{r(e),\nu}^*                                                                   \\
		                                                 & = V_{r(e),\mu\alpha_e(g)} i_A(\pointmv{e}{1}{1}{1}) V_{r(e),\nu}^*,
	\end{align*}
	where we have used \eqref{eq:rangesource} for the last equality. Now using \eqref{eq:unitadjoint} we see that,
	\begin{align*}
		V_{r(e),\mu\alpha_e(g)} i_A(\pointmv{e}{1}{1}{1}) V_{r(e),\nu}^*
		 & =  \sum_{\substack{r(f)=r(e)                                                                     \\\sigma \in \Sigma_f}} \sum_{\substack{r(t)=r(e)\\\rho \in \Sigma_t}} i_A(\pointmv{f}{\mu\alpha_e(g) \cdot \sigma}{c_{f}(\mu\alpha_e(g),\sigma)}{\sigma}) i_A(\pointmv{e}{1}{1}{1})  i_A(\pointmv{t}{\rho}{c_t(\nu,\rho)^{-1}}{\nu \cdot \rho})\\
		 & = i_A(\pointmv{e}{\mu \alpha_e(g) \cdot 1}{c_e(\mu \alpha_e(g),1) c_e(\nu,1)^{-1}}{\nu \cdot 1}) \\
		 & = i_A(\pointmv{e}{\mu}{g}{\nu}).
	\end{align*}
	Now suppose that $(\mu,\nu,g,\sigma) \in X_{fe}$. Then,
	\begin{align*}
		(\Phi \circ \Psi)(i_E(\chare{fe}{\mu}{\nu}{g}{\sigma})) = \Phi (s_{\mu f} s_{\nu e} u_{s(e),\alpha_{\ol{e}}(g)} s_{\sigma e}^*)
		= T_{\mu f} T_{
				\nu e}V_{s(e),\alpha_{\ol{e}}(g)} T_{\sigma e}^*.
	\end{align*}
	Using the computation of $(\Phi \circ \Psi)(i_A(\pointmv{e}{\mu}{g}{\nu}))$ above we find that,
	\begin{align*}
		T_{\mu f} T_{
				\nu e}V_{s(e),\alpha_{\ol{e}}(g)} T_{\sigma e}^*
		 & =	T_{\mu f}V_{r(e),\nu\alpha_e(g)} T_e T_{e}^* V_{r(e),\sigma}^*                   \\
		 & = T_{\mu f} i_A(\pointmv{e}{\nu}{g}{\sigma})                                      \\
		 & = V_{r(f),\mu} \sum_{\substack{\substack{r(t)= s(f)}                              \\\rho \in \Sigma_t \setminus \Delta_{ft}}} i_E(\chare{ft}{1}{\rho}{1}{\rho}) i_A(\pointmv{e}{\nu}{g}{\sigma})\\
		 & = V_{r(f),\mu} i_E(\chare{fe}{1}{\nu}{g}{\sigma})                                 \\
		 & =\sum_{\substack{r(t)=r(e)                                                        \\\rho \in \Sigma_t}} i_A(\pointmv{t}{\mu \cdot \rho}{c_{f}(\mu,\rho)}{\rho}) i_E(\chare{fe}{1}{\nu}{g}{\sigma})\\
		 & = i_A(\pointme{f}{\mu \cdot 1}{c_f(\mu,1)}{1}) i_E(\chare{fe}{1}{\nu}{g}{\sigma}) \\
		 & =  i_E(\chare{fe}{\mu}{\nu}{g}{\sigma}).
	\end{align*}
	It now follows that $\Phi$ and $\Psi$ are mutually inverse isomorphisms. Gauge-equivariance follows from the definition of either $\Phi$ or $\Psi$.
\end{proof}

%

\section{$\Oo_D$ as an Exel-Pardo algebra}
\label{sec:ExelPardo}

We now make a return to the graph of groups correspondence $(\ol{\varphi},D)$ and its Cuntz-Pimsner algebra $\Oo_D$. In many ways, $\Oo_D$ is more elementary than $\Oo_E$, and since the algebras are Morita equivalent by Proposition~\ref{prop:DandEmoritaeq}, for many purposes we can instead consider $\Oo_D$. Cuntz-Pimsner algebras often admit a description in terms of generators and relations, and $\Oo_D$ is no exception.

\begin{prop}\label{prop:Dgenrel}
	Let $\Gg = (\Gamma,G)$ be a locally finite nonsingular graph of countable groups. Then $\Oo_D$ is the universal $C^*$-algebra generated by a collection of partial isometries
	$\{s_{f\mu e} \mid fe \in \Gamma^2 ,\, \mu \in \Sigma_e \setminus \Delta_{fe} \}$ and a
	family of partial unitary representations $u_{e} \colon g \mapsto u_{e,g}$ of $G_e$ for each $e \in \Gamma^1$ satisfying the relations:
	\begin{enumerate}[label={ (D\arabic*)},align=left]
		\setlength\itemsep{0.7em}
		\item \label{itm:D1}
		      $u_{e,1} u_{f,1} = 0$ for each $e,\,f \in \Gamma^1$ with $e \ne f$;
		\item \label{itm:D2}
		      $u_{f,g} s_{f\mu e} = s_{f(\alpha_{\ol{f}}(g) \cdot \mu)e} u_{e,c_{fe}(g,\mu)}$ for each $fe \in \Gamma^2$ and $g \in G_f$;
		\item \label{itm:D3}
		      $ s_{f\mu e}^*s_{f\mu e} = u_{e,1}$ for all $fe \in \Gamma^2$ and $\mu \in \Sigma_e \setminus \Delta_{fe}$;
		\item \label{itm:D4}
		      $\displaystyle \sum_{\substack{r(e) = s(f) \\ \mu \in \Sigma_{e} \setminus \Delta_{fe}}} s_{f\mu e} s_{f\mu e}^* = u_{f,1}$ for all $f \in \Gamma^1$.
	\end{enumerate}
\end{prop}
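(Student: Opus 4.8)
The plan is to exhibit the relations \ref{itm:D1}--\ref{itm:D4} as a generators-and-relations presentation of the universal Cuntz-Pimsner covariant representation $(i_B,i_D)\colon(\ol{\varphi},D)\to\Oo_D$, running the argument of Section~\ref{sec:cpmodel} with $D$ in place of $E$. Write $u_{e,g}:=i_B(g)$, regarding $g\in G_e$ as a canonical unitary in $C^*(G_e)=B_e\subseteq B$, and for $fe\in\Gamma^2$ and $\mu\in\Sigma_e\setminus\Delta_{fe}$ set $s_{f\mu e}:=i_D(\chi^{fe}_{\mu,1})$, where $\chi^{fe}_{\mu,1}$ is the point mass at $1\in G_e$ in the $\mu$-th summand of $D_{fe}=\bigoplus_{\mu}B_e$. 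Since $B$ is generated by the $u_{e,g}$ and $D$ is generated as a right Hilbert $B$-module by the $\chi^{fe}_{\mu,1}$, the set $\{s_{f\mu e},u_{e,g}\}$ generates $\Oo_D$.

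First I would check that these elements satisfy \ref{itm:D1}--\ref{itm:D4}. Relation \ref{itm:D1} is immediate from the orthogonality of the summands $B_e$ of $B$. Relation \ref{itm:D2} is the left-action identity $i_B(a)i_D(\xi)=i_D(\ol{\varphi}(a)\xi)$ applied to $a=g\in G_f$ and $\xi=\chi^{fe}_{\mu,1}$, using \eqref{eq:leftactionD} (the analogue of \eqref{eq:pmleft} for $D$) to compute $\ol{\varphi}(g)\chi^{fe}_{\mu,1}=\chi^{fe}_{\alpha_{\ol{f}}(g)\cdot\mu,\,c_{fe}(g,\mu)}$. Relation \ref{itm:D3} follows from $i_D(\xi)^*i_D(\eta)=i_B((\xi\mid\eta)_B)$ together with $(\chi^{fe}_{\mu,1}\mid\chi^{fe}_{\mu,1})_{B_e}=1_{B_e}$. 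Relation \ref{itm:D4} is exactly Cuntz-Pimsner covariance of $(i_B,i_D)$ evaluated at $1_{B_f}$: an analogue of Lemma~\ref{lem:compacts} gives that $\{\chi^{fe}_{\mu,1}\}$ is a frame for $D$ with $\ol{\varphi}(1_{B_f})=\sum_{r(e)=s(f),\,\mu}\Theta_{\chi^{fe}_{\mu,1},\chi^{fe}_{\mu,1}}$, and applying $i_D^{(1)}$ yields $\sum s_{f\mu e}s_{f\mu e}^*=i_B(1_{B_f})=u_{f,1}$.

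For the universal property, suppose $\{S_{f\mu e},U_e\}$ is a family in a $C^*$-algebra $C$ satisfying \ref{itm:D1}--\ref{itm:D4}; the plan is to manufacture a Cuntz-Pimsner covariant representation $(\Pi,\Psi)\colon(\ol{\varphi},D)\to C$ with $\Pi(g)=U_{e,g}$ for $g\in G_e$ and $\Psi(\chi^{fe}_{\mu,1})=S_{f\mu e}$, and then invoke the universal property of $\Oo_D$. Each $U_e$ is a unitary representation of $G_e$ in the corner $U_{e,1}CU_{e,1}$, so the universal property of $C^*(G_e)$ furnishes $\Pi_e\colon B_e\to C$; relation \ref{itm:D1} makes their ranges orthogonal, so the $\Pi_e$ assemble into $\Pi\colon B\to C$. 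I would then define $\Psi$ on the dense subspace spanned by $\chi^{fe}_{\mu,g}=\chi^{fe}_{\mu,1}\cdot g$ by $\Psi(\chi^{fe}_{\mu,g})=S_{f\mu e}U_{e,g}$. The representation axioms reduce to \ref{itm:D2} (left action), the right $B$-action (automatic), and the inner-product identity, the last requiring $S_{f\mu e}^*S_{f'\mu'e'}=\delta_{(f,\mu,e),(f',\mu',e')}U_{e,1}$. I would derive this orthogonality from \ref{itm:D3} and \ref{itm:D4}: the projections $S_{f\mu e}S_{f\mu e}^*$ (for fixed $f$) sum to the projection $u_{f,1}$, hence are mutually orthogonal, which kills the cross terms within a fixed $f$, while \ref{itm:D1} kills them across distinct $f$ after noting $u_{f,1}S_{f\mu e}=S_{f\mu e}$. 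Boundedness of $\Psi$ then follows from $\|\Psi(\xi)\|^2=\|\Pi((\xi\mid\xi)_B)\|\le\|\xi\|^2$, and Cuntz-Pimsner covariance follows from \ref{itm:D4} via the frame expansion adapted from \eqref{eq:compacts}. The universal property of $\Oo_D$ then produces a $*$-homomorphism $\Oo_D\to C$ with $s_{f\mu e}\mapsto S_{f\mu e}$ and $u_{e,g}\mapsto U_{e,g}$, unique because these elements generate $\Oo_D$.

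The hard part will be the bookkeeping in the universality direction: verifying that \ref{itm:D1}--\ref{itm:D4} \emph{force} the partial isometries $S_{f\mu e}$ to have mutually orthogonal ranges for fixed $f$ and orthogonal range/source behaviour across distinct $f$, since this is what makes $\Psi$ well-defined and isometric on the module. Once this orthogonality is in hand the remaining checks are the same covariance computations as in Section~\ref{sec:cpmodel} run in reverse. I would also take minor care that the infinite direct-sum assembly of the $\Pi_e$ and the frame expansion converge appropriately, but local finiteness and nonsingularity of $\Gg$ control this exactly as in Proposition~\ref{prop:D}.
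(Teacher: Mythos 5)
Your proposal is correct and follows essentially the same route as the paper's proof: both directions use the same generators ($s_{f\mu e}=i_D(\chi^{fe}_{\mu,1})$, $u_{e,g}=i_B(\epsilon^e_g)$), the same assembly of $\Pi$ from the universal property of the group $C^*$-algebras via \ref{itm:D1}, the same key step of deducing mutual orthogonality of the range projections $S_{f\mu e}S_{f\mu e}^*$ from \ref{itm:D1} and \ref{itm:D4} to get the inner-product identity, and the same frame argument for Cuntz-Pimsner covariance. The "hard part" you flag is exactly the computation the paper carries out in its display \eqref{eq:geninnerprod}.
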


\begin{proof}
	Let $(i_B,i_D)$ denote the universal Cuntz-Pimsner covariant representation of $(\ol{\varphi},D)$  in $\Oo_D$.
	For each $e \in \Gamma^1$, let $\epsilon_g^e \in C_c(G_e) \subseteq C^*(G_e)$ denote the point mass at $g \in G_e$. For each $fe \in \Gamma^2$ and $\mu \in \Sigma_e \setminus \Delta_{fe}$ let $\chi_{\mu}^{fe}$ denote the identity for the $\mu$-th copy of $C^*(G_e)$ in the direct sum $D_{fe}$. Then set
	\[
		s_{ f\mu e} := i_D(\chi_{\mu}^{fe}) \quad\text{ and } \quad u_{e,g} := i_B(\epsilon_g^e).
	\]
	Using the fact that $(i_B,i_D)$ is a Cuntz-Pimsner covariant representation of $(\ol{\varphi}, D)$ it is straightforward to check that each $s_{ f\mu e}$ is a partial isometry, each $u_{e} \colon g \mapsto u_{e,g}$ is a partial unitary representation of $G_e$ in $D$, and the
	relations \ref{itm:D1}--\ref{itm:D4} are satisfied. Since the module $D = \ol{\spaan} \{ \chi_{\mu}^{fe} \cdot a \mid fe \in \Gamma^2,\, \mu \in \Sigma_e \setminus \Delta_{fe},\, a \in C^*(G_e)\}$, it follows that $\Oo_D$ is generated by
	$
		\{ s_{f \mu g } \mid fe \in \Gamma^2,\, \mu \in \Sigma_e \setminus \Delta_{fe}\} \cup \{ u_{e,g} \mid e \in \Gamma^1,\, g \in G_e\}.
	$

	Now let $C$ be a $C^*$-algebra generated by  partial isometries $\{t_{f\mu e} \mid fe \in \Gamma^2 ,\, \mu \in \Sigma_e \setminus \Delta_{fe} \}$ and a family of partial unitary representations $v_{e} \colon g \mapsto v_{e,g}$ of $G_e$ satisfying \ref{itm:D1}--\ref{itm:D4}. Since $B$ is a direct sum of full group $C^*$-algebras, universality yields a $*$-homomorphism $\pi \colon B \to C$ such that $\pi(\epsilon_g^e) = v_{e,g}$.

	Let  $fe,\,pq \in \Gamma^2$, $\mu \in \Sigma_e \setminus \Delta_{fe}$, $\nu \in \Sigma_q \setminus \Delta_{pq}$,  $g \in G_e$, and $h \in G_q$. It follows from \ref{itm:D1} and \ref{itm:D4} that $t_{f \mu e} t_{f \mu e}^*$ and $t_{p \nu q}t_{p \nu q}^*$ are mutually orthogonal projections unless $f\mu e = p \nu q$. Accordingly, \ref{itm:D3} and \eqref{eq:preip} imply that
	\begin{align}\label{eq:geninnerprod}
		\begin{split}
			(t_{f \mu e}u_{e,g})^* t_{p \nu q}u_{q,h}
			&= v_{e,g}^* t_{f \mu e}^* t_{p \nu q}v_{q,h}
			= v_{e,g}^* t_{f \mu e}^*t_{f \mu e}t_{f \mu e}^* t_{p \nu q} t_{p \nu q}^* t_{p \nu q}v_{q,h}\\
			&= \delta_{f \mu e, p \nu q} v_{e,g}^* t_{f \mu e}^* t_{f \mu e} v_{e,h}\\
			&= \delta_{f \mu e, p \nu q} v_{e,g^{-1}h}\\
			&=  \delta_{f \mu e, p \nu q} \pi(\epsilon^e_{g^{-1}h})\\
			&= \pi ((\chi_{\mu}^{fe} \cdot \epsilon^e_g \mid \chi_{\nu}^{pq} \cdot \epsilon^q_h)_{B}).
		\end{split}
	\end{align}
	In particular, $\|\xi_{\mu}^{fe} \cdot \epsilon^e_g \|	\le \|t_{f \mu e}v_{e,g}\|$.
	It follows that there is a norm-decreasing linear map $\psi \colon D \to C$ satisfying $\psi(\chi_{\mu}^{fe} \cdot \epsilon_g^e) = t_{f\mu e} v_{e,g}$ for all $fe \in \Gamma^2$, $\mu \in \Sigma_e \setminus \Delta_{fe}$ and $g \in G_e$. As $\psi(\chi_{\mu}^{fe} \cdot \epsilon_g^e) = \psi(\chi_\mu^{fe}) \pi(\epsilon_g^e)$ it  follows from a standard argument that $\psi(\xi \cdot a) = \psi(\xi) \pi(a)$ for all $\xi \in D$ and $a \in B$. Similarly, \eqref{eq:geninnerprod} implies that $\psi(\xi)^*\psi(\eta) = \pi((\xi\mid \eta)_B)$ for all $\xi,\, \eta \in D$.

	To see that the left action is respected, observe that \eqref{eq:leftactionD} together with \ref{itm:D2} imply that
	\begin{align*}
		\pi(\epsilon_g^q) \psi(\chi^{fe}_\mu) =  \delta_{q,f} v_{f,g} t_{f \mu e} = \delta_{q,f}  t_{f(\alpha_{\ol{f}}(g) \cdot \mu)e} v_{e,c_{fe}(g,\mu)}=
		\delta_{q,f} \psi(\chi_{\alpha_{\ol{f}}(g) \cdot \mu}^{fe} \cdot \epsilon_{c_{fe}(g,\mu)}^e) = \psi(\ol{\varphi}(\epsilon_g^q) \chi^{fe}_\mu ).
	\end{align*}
	Again, a standard argument shows that $\psi(\ol{\varphi}(a)\xi) = \pi(a)\psi(\xi)$ for all $\xi \in D$ and $a \in B$. Hence, $(\pi,\psi)$ is a representation of $(\ol{\varphi},D)$ in $C$.

	For Cuntz-Pimsner covariance, we use the fact that $(\chi_{\mu}^{fe})_{fe \in \Gamma^2, \mu \in \Sigma_e \setminus \Delta_{fe}}$ constitutes a frame for $D$. So $\ol{\varphi}(a) = \sum_{fe \in \Gamma^2} \sum_{\mu \in \Sigma_e \setminus \Delta_{fe}} \Theta_{\ol{\varphi}(a)\chi_{\mu}^{fe},\chi_{\mu}^{fe}}$. It then follows from \ref{itm:D4} that for all $f \in e$ and $g \in G_e$,
	\begin{align*}
		\psi^{(1)} \circ \ol{\varphi}(\epsilon_g^e)
		 & = \psi^{(1)} \Big(  \sum_{\substack{r(e) = s(f) \\ \mu \in \Sigma_{e} \setminus \Delta_{fe}}}  \Theta_{\ol{\varphi}(\epsilon_g^e)\chi_{\mu}^{fe},\chi_{\mu}^{fe}} \Big)
		= \sum_{\substack{r(e) = s(f)                      \\ \mu \in \Sigma_{e} \setminus \Delta_{fe}}} \pi(\epsilon_g^e)\psi(\chi_{\mu}^{fe}) \psi(\chi_{\mu}^{fe})^*\\
		 & = \sum_{\substack{r(e) = s(f)                   \\ \mu \in \Sigma_{e} \setminus \Delta_{fe}}} v_{e,g} t_{f \mu e} t_{f \mu e}^*
		= v_{e,g}
		= \pi(\epsilon_g^e).
	\end{align*}
	It once again follows from a standard argument that $\psi^{(1)} \circ \ol{\varphi} (a) = \pi(a)$ for all $a \in B$, so $(\ol{\varphi},D)$ is Cuntz-Pimsner covariant.

	The universal property of $\Oo_D$ as a Cuntz-Pimsner algebra yields a unique $*$-homomorphism $\Phi \colon \Oo_D \to C$ such that $\Phi \circ i_D = \psi$ and $\Phi \circ i_B = \pi$. Moreover, $\Phi$ is the unique $*$-homomorphism satisfying $\Phi(s_{f \mu e}) = t_{f \mu e}$ and $\Phi(u_{e,g}) = v_{e,g}$ for all $fe \in \Gamma^2$, $\mu \in \Sigma_e \setminus \Delta_{fe}$, and $g \in G_e$. Hence, $\Oo_D$ is universal for the relations \ref{itm:D1}--\ref{itm:D4}.
\end{proof}

\begin{rmk}
	Similar considerations to the proof of Proposition~\ref{prop:Dgenrel} can also provide a set of generators and relations for $\Oo_E\cong C^*(\Gg)$ which differ from the usual $\Gg$-family generators. We omit this description as it is more complicated than the corresponding one for $\Oo_D$, and we do not make use of it.
\end{rmk}

Exel and Pardo  \cite{EP17} introduced a class of $C^*$-algebras, now referred to as \hl{Exel-Pardo algebras}, which simultaneously generalise Nekrashevych's algebras \cite{Nek04,Nek09} for self-similar actions, as well as class of $C^*$-algebras introduced by Katsura \cite{Kat08AB} which describe all UCT Kirchberg algebras.

\begin{dfn}[\cite{EP17}]\label{dfn:exelpardo}
	Let $G$ be a countable discrete group acting on a finite directed graph $\Lambda = (\Lambda^0,\Lambda^1,r,s)$, and suppose that $c \colon G \times \Lambda^1 \to G$ is a cocycle for the action satisfying $c(g,e) \cdot x = g \cdot x$ for all $g \in G$, $e \in \Lambda^1$, and $x \in \Lambda^0$.  The \hl{Exel-Pardo algebra} $\Oo_{\Lambda,G}$ is the universal unital $C^*$-algebra generated by
	\[
		\{	p_x \mid x \in \Lambda^0 \} \cup \{v_e \colon e \in \Lambda^1\} \cup \{w_g \colon g \in G\}
	\]
	subject to relations:
	\begin{enumerate}[label={ (EP\arabic*)},align=left]
		\setlength\itemsep{0.5em}
		\item \label{itm:EP1}$\{	p_x \mid x \in \Lambda^0 \} \cup \{v_e \colon e \in \Lambda^1\}$ is a Cuntz-Krieger $\Lambda$-family in the sense of \cite{Rae05};
		\item \label{itm:EP2}$g \mapsto w_g$ is a unitary representation of $G$;
		\item \label{itm:EP3}$w_g v_e = v_{g\cdot e} w_{c(g,e)}$ for all $g \in G$ and $e \in \Lambda^1$; and
		\item \label{itm:EP4}$w_gp_x = p_{g \cdot x} w_g$ for all $g \in G$ and $x \in \Lambda^0$.
	\end{enumerate}
\end{dfn}

We claim that for a graph of groups with finite underlying graph, the algebra $\Oo_D$ is an Exel-Pardo algebra and give an explicit construction. To this end, let $\Gg=(\Gamma,G)$ be a locally finite nonsingular graph of countable discrete groups with a fixed choice of transversals $\Sigma_e$ for each $e \in \Gamma^1$. Moreover, suppose that $\Gamma^1$ is actually finite.

Consider the directed graph
$
	\Lambda_{\Gg} = (\Lambda_{\Gg}^0,\Lambda_{\Gg}^1,r_{\Lambda},s_{\Lambda})
$
with vertices $\Lambda_\Gg^0 = \Gamma^1$, edges $\Lambda_\Gg^1 = \{f\mu e \mid fe \in \Gamma^2,\, \mu \in \Sigma_e \setminus \Delta_{fe}\}$, range map $r_\Lambda(f\mu e) = f$, and source map  $s_\Lambda(f\mu e) = e$. Local finiteness of $\Gg$ implies that $\Lambda$ is locally finite, and nonsingularity of $\Gg$ implies that $\Lambda$ has no sources. Note that if $\Sigma_{e}= \{1\}$, then there are no edges from the vertex $e$ to $\ol{e}$.

Define an action of $g = (g_k)_{k \in \Gamma^1} \in \prod_{k \in \Gamma^1} G_k $ on $\Lambda_\Gg$ by
\begin{equation}\label{eq:epaction}
	g \cdot e = e \quad \text{and} \quad g \cdot f \mu e = 
		f (\alpha_{\ol{f}}(g_f) \cdot \mu) e 
\end{equation}
for all $e \in \Lambda_\Gg^0$ and $f \mu e \in \Lambda_\Gg^1$.
The structure of the directed graph $\Lambda_\Gg$ and the action of $\prod G_e$ on $\Lambda_\Gg$ are independent of the choice of transversals.
For each $f \in \Gamma^1$ let $\iota_f \colon G_f \to \prod G_e$ denote the natural inclusion and define a map $c \colon \prod G_e \times \Lambda_\Gg^1 \to \Lambda_\Gg^1$ by
\begin{equation}\label{eq:epcocycle}
	c(g,f\mu e) = \iota_e \circ c_{fe}(g_f,\mu).
\end{equation}
Since each $c_{fe}$ is a cocycle it follows that $c$ is a cocycle for the action of $\prod G_e$ on $\Lambda_\Gg^1$. Moreover, since the action of $\prod G_e$ fixes $\Lambda_\Gg^0$, the cocycle trivially satisfies $c(g,f\mu e) \cdot e = g \cdot e$ for all $e \in \Lambda_\Gg^0$.

\begin{thm}\label{thm:exelpardo}
	Let $\Gg=(\Gamma,G)$ be a locally finite nonsingular graph of countable discrete groups with $\Gamma$ a finite graph. Then
	$\Oo_D$ is isomorphic to the Exel-Pardo algebra $\Oo_{\Lambda_{\Gg}, \prod_{e \in \Gamma^1} G_e}$ with cocycle $c$ given by \eqref{eq:epcocycle}.
\end{thm}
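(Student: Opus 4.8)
The plan is to produce mutually inverse unital $*$-homomorphisms between $\Oo_D$ and $\Oo_{\Lambda_\Gg,\prod_{e\in\Gamma^1}G_e}$ using the two universal properties at hand: the generators-and-relations presentation of $\Oo_D$ from Proposition~\ref{prop:Dgenrel}, and the universal property of the Exel-Pardo algebra from Definition~\ref{dfn:exelpardo}. Since $\Gamma$ is finite and each $G_e$ is countable, $\Lambda_\Gg$ is a finite graph and $\prod_{e\in\Gamma^1}G_e$ is a countable discrete group, so the Exel-Pardo construction applies; moreover $\Oo_D$ is unital (the coefficient algebra $B$ is a finite direct sum of unital group $C^*$-algebras and $\ol{\varphi}$ is unital), matching the unital Exel-Pardo algebra.

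For the forward map I would exhibit an Exel-Pardo $(\Lambda_\Gg,\prod_{e\in\Gamma^1}G_e)$-family inside $\Oo_D$ by setting $P_e := u_{e,1}$ for $e\in\Lambda_\Gg^0=\Gamma^1$, $V_{f\mu e}:=s_{f\mu e}$ for $f\mu e\in\Lambda_\Gg^1$, and $W_g:=\sum_{k\in\Gamma^1}u_{k,g_k}$ for $g=(g_k)\in\prod_{e\in\Gamma^1}G_e$. Relations \ref{itm:D1}, \ref{itm:D3}, \ref{itm:D4} say precisely that $\{P_e,V_{f\mu e}\}$ is a Cuntz-Krieger $\Lambda_\Gg$-family (recall $r_\Lambda(f\mu e)=f$, $s_\Lambda(f\mu e)=e$), giving \ref{itm:EP1}. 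That $g\mapsto W_g$ is a unitary representation (\ref{itm:EP2}) follows from \ref{itm:D1} (which kills cross terms, so $W_gW_h=W_{gh}$) together with $\sum_k u_{k,1}=1_{\Oo_D}$ (whence $W_g^*W_g=W_gW_g^*=1$). For \ref{itm:EP3}, since $s_{f\mu e}=u_{f,1}s_{f\mu e}$ the orthogonality \ref{itm:D1} yields $W_g s_{f\mu e}=u_{f,g_f}s_{f\mu e}$, and \ref{itm:D2} rewrites this as $s_{f(\alpha_{\ol{f}}(g_f)\cdot\mu)e}\,u_{e,c_{fe}(g_f,\mu)}$; unwinding \eqref{eq:epaction} and \eqref{eq:epcocycle} and using $(\iota_e(h))_e=h$ shows the right-hand side $V_{g\cdot f\mu e}W_{c(g,f\mu e)}$ equals the same expression. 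Finally \ref{itm:EP4} reduces to $W_g P_e=u_{e,g_e}=P_e W_g$ because the action fixes vertices. Universality of $\Oo_{\Lambda_\Gg,\prod_{e\in\Gamma^1}G_e}$ then yields $\pi\colon\Oo_{\Lambda_\Gg,\prod_{e\in\Gamma^1}G_e}\to\Oo_D$.

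For the backward map I would check that $S_{f\mu e}:=v_{f\mu e}$ and the partial unitary representations $U_e\colon g\mapsto U_{e,g}:=w_{\iota_e(g)}p_e$ satisfy \ref{itm:D1}--\ref{itm:D4} inside $\Oo_{\Lambda_\Gg,\prod_{e\in\Gamma^1}G_e}$. Here \ref{itm:D1} is orthogonality of vertex projections, \ref{itm:D3} and \ref{itm:D4} are the Cuntz-Krieger relations of \ref{itm:EP1}, and \ref{itm:D2} follows from \ref{itm:EP3} and \ref{itm:EP4} (using $v_{f\mu e}=v_{f\mu e}p_e$ and that the $e$-component of the cocycle is $c_{fe}(g,\mu)$). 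A direct verification also shows $g\mapsto U_{e,g}$ is a partial unitary representation with common domain and range projection $p_e$. Proposition~\ref{prop:Dgenrel} then produces $\Phi\colon\Oo_D\to\Oo_{\Lambda_\Gg,\prod_{e\in\Gamma^1}G_e}$ with $\Phi(s_{f\mu e})=v_{f\mu e}$ and $\Phi(u_{e,g})=w_{\iota_e(g)}p_e$.

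Checking $\pi\circ\Phi=\id$ and $\Phi\circ\pi=\id$ on generators is routine except for the image of $w_g$ under $\Phi\circ\pi$, and I expect this to be the main obstacle: one must show $\sum_k w_{\iota_k(g_k)}p_k=w_g$. The crux is the identity $w_{\iota_j(h)}p_k=p_k$ whenever $j\ne k$. I would prove it by expanding $p_k=\sum_{k\nu e}v_{k\nu e}v_{k\nu e}^*$ via the Cuntz-Krieger relation—valid because nonsingularity of $\Gg$ forces $\Lambda_\Gg$ to have no sources—and observing that by \eqref{eq:epcocycle} we have $c(\iota_j(h),k\nu e)=\iota_e(c_{ke}(1,\nu))=1$, so \ref{itm:EP3} gives $w_{\iota_j(h)}v_{k\nu e}=v_{k\nu e}$. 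Granting this, $w_g p_k=w_{\iota_k(g_k)}p_k$ for every $k$ (the remaining factors of $g$ acting trivially on $p_k$), whence $\sum_k w_{\iota_k(g_k)}p_k=\sum_k w_g p_k=w_g$. The remaining round-trip identities—$\pi\Phi(u_{e,g})=W_{\iota_e(g)}u_{e,1}=u_{e,g}$ and both maps fixing $s_{f\mu e}$, $v_{f\mu e}$, $p_e$—are immediate, so $\pi$ and $\Phi$ are mutually inverse isomorphisms.
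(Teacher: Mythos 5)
Your proposal is correct, and at heart it follows the same strategy as the paper's proof: both use the presentation of $\Oo_D$ from Proposition~\ref{prop:Dgenrel} together with Definition~\ref{dfn:exelpardo}, and the same dictionary $p_e \leftrightarrow u_{e,1}$, $v_{f\mu e} \leftrightarrow s_{f\mu e}$, $w_g \leftrightarrow \sum_{k} u_{k,g_k}$, $u_{e,h} \leftrightarrow w_{\iota_e(h)}p_e$ (your $w_{\iota_e(h)}p_e$ equals the paper's $p_e w_{\iota_e(h)} p_e$ by \ref{itm:EP4}). The difference is one of thoroughness. The paper merely asserts that the two lists of relations correspond under this dictionary, whereas you construct both universal $*$-homomorphisms and verify that they are mutually inverse; in particular you isolate the only non-trivial round-trip identity, $\sum_k w_{\iota_k(g_k)}p_k = w_g$, and your proof of it is right: for $j\ne k$ the cocycle \eqref{eq:epcocycle} gives $c(\iota_j(h),k\nu e)=\iota_e(c_{ke}(1,\nu))=1$ while the action \eqref{eq:epaction} fixes $k\nu e$, so \ref{itm:EP3} yields $w_{\iota_j(h)}v_{k\nu e}=v_{k\nu e}$, and expanding $p_k$ by the Cuntz--Krieger relation (legitimate because nonsingularity makes $\Lambda_\Gg$ source-free) gives $w_{\iota_j(h)}p_k=p_k$. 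The paper says nothing about this point. One remark applying equally to your argument and to the paper's: the final step $\sum_k w_g p_k = w_g$ uses $\sum_k p_k = 1$ in $\Oo_{\Lambda_\Gg,\prod_e G_e}$. This is not a formal consequence of \ref{itm:EP1} as cited from \cite{Rae05}---a Cuntz--Krieger family need not have vertex projections summing to the ambient unit---but it is part of Exel and Pardo's convention for finite graphs; without it, the universal unital algebra of Definition~\ref{dfn:exelpardo} would acquire a spurious central summand isomorphic to $C^*(\prod_e G_e)$ supported on $1-\sum_k p_k$, and the theorem as literally stated would fail. So your proof is complete under exactly the same (intended) reading of the definition that the paper's own proof requires, and it supplies the inverse-verification that the paper elides.
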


\begin{proof}
	We show that the defining relations for $\Oo_D$ from Proposition~\ref{prop:Dgenrel} agree with the defining relations of the Exel-Pardo algebra  $\Oo_{\Lambda_{\Gg}, \prod_{e \in \Gamma^1} G_e}$ with cocycle $c$. Suppose that $\{u_{e,g} \colon e \in \Gamma^1,\, g \in G_e\} \cup \{s_{f\mu e} \colon f\mu e \in \Lambda_{\Gg}^1\}$ satisfy the hypotheses of Proposition~\ref{prop:Dgenrel}, and that $\{	p_x \mid x \in \Lambda_\Gg^0 \} \cup \{v_e \colon e \in \Lambda_\Gg^1\} \cup \{w_g \colon g \in G\}$ satisfy the hypotheses of Definition~\ref{dfn:exelpardo}.

	Clearly \ref{itm:D1}, \ref{itm:D3}, and \ref{itm:D4} are equivalent to $\{u_{e,1} \colon e \in \Gamma^1\} \cup \{s_{f\mu e} \colon f\mu e \in \Lambda_{\Gg}^1\}$ being a Cuntz-Krieger $\Lambda_\Gg$-family. Moreover, we have the following correspondence between the two families of elements:
	\begin{align*}
		p_e                    & \xleftrightarrow{\quad} u_{e,1}                         \\
		v_{f \mu e}            & \xleftrightarrow{\quad} s_{f \mu e}                     \\
		w_g                    & \xleftrightarrow{\quad} \sum_{e \in \Gamma^1} u_{e,g_e} \\
		p_e w_{\iota_e(h)} p_e & \xleftrightarrow{\quad} u_{e,h}	.
	\end{align*}
	Since $\Gamma^1$ is finite, $\sum_{e \in \Gamma^1} u_{e,1} = 1$. So $g \mapsto \sum_{e \in \Gamma^1} u_{e,g_e}$ is a unitary representation of $\prod G_e$, and $h \mapsto p_e w_{\iota_e(h)} p_e$ is a partial unitary representation of $G_e$.
	The condition \ref{itm:D2} corresponds directly to the condition \ref{itm:EP3}, and \ref{itm:EP4} is trivally satisfied since the action of $\prod G_e$ on $\Lambda_\Gg$ fixes vertices.
\end{proof}

\begin{example}
	If $\Gg = (\Gamma,G)$ is a graph of groups with trival edge groups, then $\prod G_e$ is trivial and $\Oo_{\Lambda_{\Gg},\{1\}}$ is just the graph $C^*$-algebra $C^*(\Lambda_\Gg)$. Note that $\Lambda_\Gg$ is typically not the directed graph $E_\Gg$ of \cite[Theorem 3.6]{BMPST17}.
\end{example}

\begin{example}
	Let $\Gg = (\Gamma,G)$ be the graph of groups associated to $BS(1,2)$ from Example~\ref{ex:DinfBS}. Then $\Lambda_\Gg$ is the directed graph defined by the diagram
	\[
		\begin{tikzpicture}[scale=2,baseline=0.5ex, vertex/.style={circle, fill=black, inner sep=2pt}]
			\begin{scope}
				clip (-2,-.6) rectangle (2,.6);

				\node[vertex,blue,label={[label distance=0.05]right:{$ \ol{e}$}}] (eb) at (0,0) [circle] {};
				\node[vertex,blue,label={[label distance=0.05]left:{$ e$}}] (e) at (-1,0) [circle] {};


				\draw[-stealth,thick] (eb) -- (e) node[pos=0.5, inner sep=2pt, anchor=south]{${e 1 \ol{e}}$};

				\draw[-stealth,thick] (.85,0) .. controls (.85,.6) and (.1,.6) .. (eb) node[pos=0, inner sep=2pt, anchor=west] {${\ol{e} 1\ol{e}}$};

				\draw[thick] (.85,0) .. controls (.85,-.6) and (.1,-.6) .. (eb);

				\draw[-stealth,thick] (.65,0) .. controls (.65,.45) and (.1,.45) .. (eb) node[pos=0, inner sep=2pt, anchor=east] {${\ol{e} 0\ol{e}}$};

				\draw[thick] (.65,0) .. controls (.65,-.45) and (.1,-.45) .. (eb);

				\draw[thick] (e) .. controls (-1.1,.45) and (-1.65,.45) .. (-1.65,0);

				\draw[-stealth,thick] (-1.65,0) .. controls (-1.65,-.45) and (-1.1,-.45) .. (e)  node[pos=0, inner sep=2pt, anchor=east] {${e 0e}$};

			\end{scope}

		\end{tikzpicture}.
	\]
	The action of $(a,b) \in G_e \times G_{\ol{e}} \cong \ZZ^2$ on $\Lambda_\Gg$ is such that $(a,0)$ acts trivially, while $(0,b)$ fixes both $e 0 e$ and $e 1 \ol{e}$, and
	\[
		(0,b) \cdot \ol{e} k \ol{e} = \ol{e} ( (b+k)\bmod 2) \ol{e}.
	\]
	The cocycle $c$ is given by
	\[
		c((a,b),x) = \begin{cases}
			\big(0,\frac{b+k -( (b+k)\bmod 2)}{2}\big) & \text{if } x = \ol{e} k \ol{e}, \\
			(0,a)                                      & \text{if } x = e 0 \ol{e},      \\
			(2a,0)                                     & \text{if } x = e 0 e.
		\end{cases}
	\]
	For this example, the triple $(\ZZ^2, \Lambda_\Gg, c)$  shares similarities with both \cite[Example 3.4]{EP17} and \cite[Example 3.3]{EP17}, but is not an instance of either example.
\end{example}

We finish this section with a summary of results.  Recall that the Bass-Serre Theorem \cite[Theorem 13]{Ser80} implies that if $G$ is a group acting on a tree $X$, then $G$ is isomorphic to the fundamental group of the associated quotient graph of groups $\Gg$, and $X$ is $G$-equivariantly isomorphic to the universal covering tree of $\Gg$. The following is now a synthesis of Theorem~\ref{thm:exelpardo}, Proposition~\ref{prop:DandEmoritaeq}, Theorem~\ref{thm:isomorphism}, and Theorem~\ref{thm:bmpst17iso}.
\begin{cor}
	\label{cor:summary}
	Suppose that $G$ is a countable group acting without inversions on a locally finite nonsingular tree $X$, and let $\tau$ denote the induced action on $C(\partial X)$. Let $\Gg = (\Gamma,G)$ denote the quotient graph of groups associated to the action of $G$ on $X$. Then with $(\ol{\varphi},D)$ the graph of groups correspondence for $\Gg$ and $({\varphi},E)$ as the amplified graph of groups correspondence,
	\[
		\Oo_D \sim_{me}	\Oo_E\cong C^*(\Gg) \sim_{me} C(\partial X_{\Gg}) \rtimes_{\tau} \pi_1(\Gg) \cong C(\partial X) \rtimes_{\tau} G ,
	\]
	where $\sim_{me}$ denotes Morita equivalence.
	In addition, if $\Gamma^1 = G \bs X^1$ is finite, then $\Oo_{\Lambda_{\Gg}, \prod_{e \in Y^1} G_e}$ is isomorphic to $\Oo_D$.

\end{cor}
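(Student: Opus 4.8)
The plan is to assemble the corollary directly from the four results quoted immediately before it, reading the displayed chain from left to right. Each of the two Morita equivalences and the two isomorphisms is furnished by an earlier statement, so the only genuine work is to check that the coefficient algebras and the actions on them match up correctly at the two ``interfaces'' where distinct results are glued together. I would therefore treat the proof as a careful concatenation rather than a fresh argument.

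First I would record the two facts about $\Oo_D$ and $\Oo_E$: Proposition~\ref{prop:DandEmoritaeq} gives $\Oo_D \sim_{me} \Oo_E$, and Theorem~\ref{thm:isomorphism} gives the isomorphism $\Oo_E \cong C^*(\Gg)$. Next, Theorem~\ref{thm:bmpst17iso} provides $C^*(\Gg) \cong \Kk(\ell^2(\Gamma^0)) \otimes (C(\partial X_\Gg) \rtimes_\tau \pi_1(\Gg))$; since tensoring a $C^*$-algebra with the compact operators on a separable Hilbert space yields a stably isomorphic, hence Morita equivalent, algebra, this produces $C^*(\Gg) \sim_{me} C(\partial X_\Gg) \rtimes_\tau \pi_1(\Gg)$, the third link in the chain. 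For the final isomorphism I would invoke Bass-Serre theory: because $\Gg$ is by hypothesis the quotient graph of groups of the action of $G$ on $X$, \cite[Theorem 13]{Ser80} supplies an isomorphism $\pi_1(\Gg) \cong G$ together with a $\pi_1(\Gg)$-equivariant isomorphism of trees $X_\Gg \cong X$. This descends to a $\pi_1(\Gg)$-equivariant homeomorphism $\partial X_\Gg \cong \partial X$ and hence to an isomorphism $C(\partial X_\Gg) \rtimes_\tau \pi_1(\Gg) \cong C(\partial X) \rtimes_\tau G$ intertwining the two boundary actions. Concatenating the four links yields the displayed chain.

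For the additional claim I would apply Theorem~\ref{thm:exelpardo}: when $\Gamma^1 = G \bs X^1$ is finite the graph $\Gamma$ is finite, so $\Oo_D \cong \Oo_{\Lambda_\Gg, \prod_{e \in \Gamma^1} G_e}$ with the cocycle $c$ of \eqref{eq:epcocycle}. To reconcile the indexing set $Y^1$ in the statement with the $\Gamma^1$ of Theorem~\ref{thm:exelpardo}, I would use that the fundamental domain $Y$ contains precisely one edge from each edge orbit, so the quotient map restricts to a bijection $Y^1 \to G \bs X^1 = \Gamma^1$ carrying $G_e = \stab_G(e)$ to the edge group of $\Gg$ at the corresponding orbit; this identifies $\prod_{e \in Y^1} G_e$ with $\prod_{e \in \Gamma^1} G_e$. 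I expect the main (indeed only) obstacle to lie in the two gluings above: verifying that the stabilisation in Theorem~\ref{thm:bmpst17iso} is Morita-trivial in the required sense, and, more substantively, that the Bass-Serre isomorphism $\pi_1(\Gg) \cong G$ intertwines $\tau$ on $\partial X_\Gg$ with the given action of $G$ on $\partial X$, so that the final crossed-product isomorphism is genuinely equivariant. Both are standard, but they are the points at which one must confirm that no implicit choice---of fundamental domain, of transversals, or of base vertex for the boundary---affects the conclusion.
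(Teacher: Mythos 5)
Your proposal is correct and follows essentially the same route as the paper, which presents the corollary precisely as a synthesis of Proposition~\ref{prop:DandEmoritaeq}, Theorem~\ref{thm:isomorphism}, Theorem~\ref{thm:bmpst17iso}, the Bass--Serre Theorem \cite[Theorem 13]{Ser80}, and Theorem~\ref{thm:exelpardo}. Your additional care at the gluing points (equivariance of the boundary identification, stabilisation by $\Kk(\ell^2(\Gamma^0))$ giving Morita equivalence, and the bijection $Y^1 \to \Gamma^1$) simply makes explicit what the paper leaves implicit.
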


\section{$K$-theory}
\label{sec:Ktheory}

The $K$-theory of Cuntz-Pimsner algebras is well-understood. In his seminal paper, Pimsner \cite{Pim97} showed that for a large class of $C^*$-correspondences, the $K$-theory of the associated Cuntz-Pimsner algebra could be computed using the Kasparov class of the correspondence. Although we make use of $KK$-theory in this section, the Kasparov products we consider are relatively tame, and we will not need $\ZZ_2$-graded $C^*$-algebras.

\begin{dfn}
	An  \hl{odd Kasparov $A$--$B$-module} $(A,{}_{\phi}E_B,V)$ consists of a
	countably generated ungraded right Hilbert $B$-module $E$, with
	$\phi\colon A\to \End_B(E)$ a $*$-homomorphism, together with  $V\in \End_B(E)$, such that
	$a(V-V^*),\ a(V^2-1),\ [V,a]$ are all compact endomorphisms on $E$
	for
	all $a\in A$.

	An \hl{even Kasparov $A$--$B$-module} has, in addition,
	a grading by a self-adjoint
	endomorphism
	$\Gamma$ satisfying $\Gamma^2=1$,
	$\phi(a)\Gamma=\Gamma\phi(a)$ for all $a \in A$, and $V\Gamma+\Gamma V=0$.
\end{dfn}

We will not describe the equivalence relation on (even)
Kasparov $A$--$B$-modules which yields
$KK(A,B)$, referring the reader to \cite{Kas80}. We note that
correspondences with compact left action define even $KK$-classes, by taking the operator $V$ to be 0.
The Kasparov product of such correspondences is just the balanced tensor product.

For each $fe \in \Gamma^2$ let $[E_{fe}] := [(A_f,E_{fe},0)] \in KK(A_f,A_e)$ and $[D_{fe}] := [(B_f,D_{fe},0)] \in KK(B_f,B_e)$ denote the elements induced by the correspondences $E_{fe}$ and $D_{fe}$. Recall that $A_e$ and $B_e$ are Morita equivalent via the bimodule $F_e$ and $E_{fe} = F_f \otimes_{B_f} D_{fe} \otimes_{B_e} F_e^*$. Moreover, since $E$ and $D$ are direct sums, Proposition \ref{prop:DandEmoritaeq} implies that $A$ and $B$ are also Morita equivalent. Conjugating with $F=\bigoplus_{e \in \Gamma^1}F_e$ gives an isomorphism
$KK(B,B)\to KK(A,A)$ which carries $[D]$ to $[E]$.

For $i = 0,1$ the Kasparov product $\cdot \otimes_{B} [D] \colon K_i(B) \to K_i(B)$ can be thought of as an ``adjacency matrix'' of group homomorphisms indexed by $\Gamma^1 \times \Gamma^1$: the entry corresponding to $(f,e) \in \Gamma^1 \times \Gamma^1$ is the group homomorphism $\cdot \otimes_{A_f} [D_{fe}] \colon K_i(C^*(G_f)) \to K_i(C^*(G_e))$ if $s(f) = r(e)$, and the zero map otherwise.

Recall that Proposition~\ref{prop:DandEmoritaeq} together with Theorem~\ref{thm:isomorphism} imply that $\Oo_D$ is Morita equivalent to $C^*(\Gg)$. We have the following six-term sequence in $K$-theory which is analogous to the sequence for directed graph $C^*$-algebras (cf. \cite[Theorem 7.16]{Rae05}).

\begin{thm} \label{thm:ktheory} Let $\Gg = (\Gamma,G)$ be a locally finite nonsingular graph of countable groups. For $i = 0,1$ let $\Lambda_i = \cdot \ox_A (\id - [D]) \colon K_i(B) \to K_i(B)$. Let $m \colon K_*(\Oo_D) \to K_*(C^*(\Gg))$ denote the isomorphism induced by Morita equivalence of Corollary~\ref{cor:summary} and let $i_B: B\hookrightarrow \Oo_D$ denote the universal inclusion. Then the following six-term sequence of abelian groups is exact:
	\begin{center}
		\begin{tikzpicture}[baseline=-0.5ex]
			\matrix (m) [matrix of math nodes,row sep=2.5em,column sep=4.em,minimum width=2em]
			{
				\bigoplus_{e \in \Gamma^1} K_0(C^*(G_e)) & \bigoplus_{e \in \Gamma^1} K_0(C^*(G_e)) & K_0(C^*(\Gg))                         \\
				K_1(C^*(\Gg))                         & \bigoplus_{e \in \Gamma^1} K_1(C^*(G_e)) & \bigoplus_{e \in \Gamma^1} K_1(C^*(G_e)) \\
			};
			\path[-stealth]
			(m-1-1) edge node [above] {$\Lambda_0$} (m-1-2)
			(m-1-2) edge node [above] {$m \circ (i_{B})_* $} (m-1-3)
			(m-1-3) edge node [right] {$\partial$} (m-2-3)
			(m-2-3) edge node [above] {$\Lambda_1$} (m-2-2)
			(m-2-2) edge node [above] {$m \circ (i_{B})_* $} (m-2-1)
			(m-2-1) edge node [left] {$\partial$} (m-1-1)
			;
		\end{tikzpicture}.
	\end{center}
\end{thm}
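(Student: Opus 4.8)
The plan is to apply Pimsner's six-term exact sequence in $K$-theory \cite{Pim97} directly to the Cuntz-Pimsner algebra $\Oo_D$ of the graph of groups correspondence $(\ol{\varphi},D)$, and then to transport the resulting sequence across the Morita equivalence $\Oo_D \sim_{me} C^*(\Gg)$ of Corollary~\ref{cor:summary}. By Proposition~\ref{prop:D} the correspondence $(\ol{\varphi},D)$ is full, $\ol{\varphi}$ is faithful and nondegenerate, and $\ol{\varphi}(B) \subseteq \End_B^0(D)$; since the left action is by compact operators, the class $[D] = [(B,D,0)] \in KK(B,B)$ is a genuine Kasparov class (with $V=0$), and these are exactly the hypotheses under which Pimsner's sequence applies. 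This furnishes an exact six-term sequence in which the horizontal maps $K_i(B) \to K_i(B)$ are the Kasparov products $\cdot \ox_B (\id - [D]) = \Lambda_i$, the maps $K_i(B) \to K_i(\Oo_D)$ are induced by the canonical inclusion $i_B \colon B \hookrightarrow \Oo_D$, and the remaining maps are the connecting homomorphisms $\partial$.

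First I would pin down the precise statement of Pimsner's sequence for $\Oo_D$ and confirm the identifications of the three kinds of maps above; the only content here is that the coefficient-algebra map in Pimsner's theorem is exactly $\id$ minus the $KK$-class of the defining correspondence, which for a correspondence with compact left action is represented by $(B,D,0)$. Next I would post-compose every occurrence of $K_*(\Oo_D)$ with the isomorphism $m \colon K_*(\Oo_D) \to K_*(C^*(\Gg))$ coming from Corollary~\ref{cor:summary} (factoring through $\Oo_D \sim_{me} \Oo_E \cong C^*(\Gg)$ of Proposition~\ref{prop:DandEmoritaeq} and Theorem~\ref{thm:isomorphism}). Replacing $K_*(\Oo_D)$ by $K_*(C^*(\Gg))$ through $m$ turns the inclusion-induced map $(i_B)_*$ into $m \circ (i_B)_*$ and leaves the horizontal maps $\Lambda_i$ untouched, as those only involve $K_*(B)$. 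Finally, using $B = \bigoplus_{e \in \Gamma^1} C^*(G_e)$ gives $K_i(B) = \bigoplus_{e \in \Gamma^1} K_i(C^*(G_e))$, yielding the sequence as stated.

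The main point requiring care is the matching of conventions across the Morita equivalence: one must check that the connecting homomorphisms $\partial$ of Pimsner's sequence are compatible with the isomorphism $m$, so that exactness is preserved after the substitution. This is routine given that $m$ is induced by an honest Morita equivalence and such isomorphisms are natural with respect to the maps appearing in the sequence. As a consistency check one could instead run Pimsner's sequence for $\Oo_E$ over the coefficient algebra $A$ and transport it along conjugation by $F = \bigoplus_{e \in \Gamma^1} F_e$, which implements the Morita equivalence $A \sim_{me} B$ and carries $[E]$ to $[D]$ (as recorded before the theorem); the two approaches produce the same sequence.
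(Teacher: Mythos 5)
Your proposal is correct and follows the same route as the paper, whose entire proof is a direct appeal to Pimsner's six-term sequence \cite[Theorem 4.9]{Pim97} applied to $(\ol{\varphi},D)$, with the hypotheses guaranteed by Proposition~\ref{prop:D} and the identification $K_*(\Oo_D)\cong K_*(C^*(\Gg))$ supplied by the Morita equivalence of Corollary~\ref{cor:summary}. Your additional remarks on verifying the hypotheses and on transporting the sequence through the isomorphism $m$ (which preserves exactness trivially, since composing with an isomorphism does) simply make explicit what the paper leaves implicit.
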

\begin{proof}
	This follows immediately from \cite[Theorem 4.9]{Pim97}.
\end{proof}
\begin{rmk}
	The corresponding six-term sequences in $KK$-theory (cf. \cite[Theorem 4.9]{Pim97}) also hold with the caveat of nuclearity being inserted where appropriate.
\end{rmk}

Corollary~\ref{cor:summary} implies that Theorem~\ref{thm:ktheory} can be reinterpreted in terms of group actions on the boundary of a tree.

\begin{cor} \label{cor:grouptree}
	Suppose that $G$ is a countable group acting without inversions on a locally finite nonsingular tree $X$ and let $\tau$ denote the induced action on $C(\partial X)$. Suppose that $Y = (Y^0,Y^1,r_Y,s_Y)$ is a fundamental domain for the action of $G$ on $X$. For each $e \in Y^1$ let $G_e = \stab_G(e)$. Let $m \colon K_*(\Oo_D) \to K_*(C(\partial X) \rtimes_\tau G)$ denote the isomorphism induced by Morita equivalence  and let $\iota_B \colon B \to \Oo_D$ denote the universal inclusion.  Then the following six-term sequence of abelian groups is exact:
	\begin{center}
		\begin{tikzpicture}[baseline=-0.5ex]
			\matrix (m) [matrix of math nodes,row sep=2.5em,column sep=4.em,minimum width=2em]
			{
				\bigoplus_{e \in Y^1} K_0(C^*(G_e)) & \bigoplus_{e \in Y^1} K_0(C^*(G_e)) & K_0(C(\partial X) \rtimes_\tau G)     \\
				K_1(C(\partial X) \rtimes_\tau G)     & \bigoplus_{e \in Y^1} K_1(C^*(G_e)) & \bigoplus_{e \in Y^1} K_1(C^*(G_e)) \\
			};
			\path[-stealth]
			(m-1-1) edge node [above] {$\Lambda_0$} (m-1-2)
			(m-1-2) edge node [above] {$m \circ (i_{B})_* $} (m-1-3)
			(m-1-3) edge node [right] {$\partial$} (m-2-3)
			(m-2-3) edge node [above] {$\Lambda_1$} (m-2-2)
			(m-2-2) edge node [above] {$m \circ (i_{B})_* $} (m-2-1)
			(m-2-1) edge node [left] {$\partial$} (m-1-1)
			;
		\end{tikzpicture}.
	\end{center}
\end{cor}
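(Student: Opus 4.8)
The plan is to deduce this directly from Theorem~\ref{thm:ktheory} by means of the Bass-Serre dictionary, with no new analysis required. First I would apply the quotient graph of groups construction to the action of $G$ on $X$, obtaining the graph of groups $\Gg = (\Gamma, G)$ with underlying graph $\Gamma = G \bs X$ and edge and vertex groups given by stabilisers, exactly as recalled in Section~\ref{sec:prelims}. Local finiteness and nonsingularity of $X$ translate into local finiteness and nonsingularity of $\Gg$, so that Theorem~\ref{thm:ktheory} applies to $\Gg$.

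The first task is to match the coefficient groups. Since $G$ acts without inversion, the edge $e$ and its reversal $\ol{e}$ lie in distinct orbits, and the fundamental domain $Y$ contains exactly one edge from each orbit; thus $e \mapsto Ge$ is a bijection $Y^1 \to \Gamma^1 = G \bs X^1$. Under this bijection the edge group attached to $Ge \in \Gamma^1$ is by construction $\stab_G(e) = G_e$, so the direct sums $\bigoplus_{e \in Y^1} K_i(C^*(G_e))$ appearing in the statement coincide with the sums $\bigoplus_{e \in \Gamma^1} K_i(C^*(G_e))$ of Theorem~\ref{thm:ktheory}. Consequently the correspondence $(\ol{\varphi},D)$, the class $[D]$, and the homomorphisms $\Lambda_i = \cdot \ox_A (\id - [D])$ built from $\Gg$ are precisely the objects named in the corollary.

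Next I would identify the target crossed product. By the Bass-Serre Theorem \cite[Theorem 13]{Ser80} the quotient graph of groups construction and the fundamental group construction are mutually inverse, so there is an isomorphism $\pi_1(\Gg) \cong G$ together with a $G$-equivariant isomorphism $X_\Gg \cong X$ of trees. This isomorphism induces a $G$-equivariant homeomorphism of boundaries $\partial X_\Gg \cong \partial X$ intertwining the two induced actions $\tau$, and hence an isomorphism of crossed products $C(\partial X_\Gg) \rtimes_\tau \pi_1(\Gg) \cong C(\partial X) \rtimes_\tau G$. Under this identification the Morita equivalence of Corollary~\ref{cor:summary} supplies the isomorphism $m \colon K_*(\Oo_D) \to K_*(C(\partial X) \rtimes_\tau G)$, and the universal inclusion $\iota_B \colon B \to \Oo_D$ is the same map as before. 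Substituting these identifications into the exact sequence of Theorem~\ref{thm:ktheory} yields the stated sequence.

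The argument is essentially bookkeeping, and there is no analytic obstacle. The only point requiring care is the matching of the edge index sets: one must check that the bijection $Y^1 \to \Gamma^1$ respects the edge-reversal involution and carries $\stab_G(e)$ to the edge group of $\Gg$ at $Ge$, so that the $\bigoplus_{e\in Y^1}$ and $\bigoplus_{e \in \Gamma^1}$ indexings genuinely agree and the homomorphisms $\Lambda_i$ together with the connecting maps $\partial$ transport correctly across the dictionary.
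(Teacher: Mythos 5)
Your proposal is correct and follows essentially the same route as the paper: form the quotient graph of groups $\Gg$ for the action of $G$ on $X$, note that the edge groups are the stabilisers of the lifted edges in $Y^1$, invoke the Morita equivalence $\Oo_D \sim_{me} C(\partial X)\rtimes_\tau G$ from Corollary~\ref{cor:summary}, and substitute into Theorem~\ref{thm:ktheory}. The paper's proof is simply a terser version of this, leaving the Bass--Serre bookkeeping (the bijection $Y^1 \to \Gamma^1$ and the identification $C(\partial X_\Gg)\rtimes_\tau \pi_1(\Gg) \cong C(\partial X)\rtimes_\tau G$) implicit in Corollary~\ref{cor:summary}.
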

\begin{proof}
	Let $\Gg$ denote the quotient graph of groups associated to the action of $G$ on $X$ as outlined in Section~\ref{sec:prelims}. In particular, $\Gamma \simeq G \bs X$ and for each $e \in \Gamma^1$ the group $G_e$ is the stabiliser of the unique lifted edge $e \in Y^1$. Since $\Oo_D$ is Morita equivalent to $C(\partial X) \rtimes_{\tau} G$, Theorem~\ref{thm:ktheory} yields the result.
\end{proof}

\begin{rmk}
	Corollary~\ref{cor:grouptree} bears a resemblance to a special case of \cite[Theorem 16]{Pim86}. However, here only the edge stabilizers are explicitly used to compute the $K$-theory of $C(\partial X) \rtimes G$. The data concerning the vertex stabilizers is encoded within the module $E$. It is not immediately clear to the authors how to translate between these two pictures.
\end{rmk}

We now move to compute the $K$-theory of some examples, especially when the vertex groups are abelian.
Although the maps $\Lambda_i$ involve taking a Kasparov product, the actual computations are relatively tame in simple examples. In what follows, we let $1_e$ denote the identity in $C^*(G_e)$ for all $e \in \Gamma^1$.

\begin{lem}\label{lem:kunit}
	Let $\Gg = (\Gamma,G)$ be a locally finite nonsingular graph of countable groups.
	For each $fe \in \Gamma^2$,
	\[
		[1_{f}] \otimes_B [D_{fe}] = (|\Sigma_e|-\delta_{f,\ol{e}}) [1_{e}],
	\]
	where $[1_{f}] \in K_0(C^*(G_f)) \cong KK(\CC, C^*(G_f))$.
\end{lem}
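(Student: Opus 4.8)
The plan is to unwind the Kasparov product $[1_f] \otimes_B [D_{fe}]$ into an internal tensor product of Hilbert modules, and then to observe that the resulting right Hilbert $B_e$-module is free, so that its $K_0$-class is determined by its rank. First I would recall that $[1_f] \in K_0(B_f) \cong KK(\CC, B_f)$ is represented by the Kasparov module $({}_\ell(B_f)_{B_f}, 0)$, where $B_f$ carries its standard right Hilbert module structure and $\ell \colon \CC \to \End_{B_f}(B_f)$ sends $\lambda \mapsto \lambda 1_f$. Likewise $[D_{fe}] \in KK(B_f, B_e)$ is represented by $({}_{\ol{\varphi}_{fe}}(D_{fe})_{B_e}, 0)$, with the zero operator because $\ol{\varphi}_{fe}(B_f) \subseteq \End_{B_e}^0(D_{fe})$ by Proposition~\ref{prop:Dfe}. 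Since $[1_f]$ is supported on the $B_f$-summand of $B$ and the left action of $B$ on $D_{fe}$ factors through $B_f$, the product $\otimes_B$ coincides with $\otimes_{B_f}$.

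Second, because both representative operators are zero, the Kasparov product is simply the internal tensor product of the modules: $(\CC, B_f \otimes_{B_f} D_{fe}, 0)$ with left $\CC$-action $\lambda \mapsto \ell(\lambda) \otimes 1$. The canonical isomorphism $B_f \otimes_{B_f} D_{fe} \cong D_{fe}$ carries $b \otimes \xi \mapsto \ol{\varphi}_{fe}(b)\xi$, and under it the $\CC$-action becomes $\lambda \mapsto \lambda\, \ol{\varphi}_{fe}(1_f)$. Here I would invoke the unitality of $\ol{\varphi}_{fe}$ from Proposition~\ref{prop:Dfe} to conclude $\ol{\varphi}_{fe}(1_f) = \id_{D_{fe}}$, so that the $\CC$-action is by scalars. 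Thus $[1_f] \otimes_B [D_{fe}]$ is exactly the $K_0(B_e)$-class of the right Hilbert $B_e$-module $D_{fe}$, with no surviving left-module data.

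Finally I would read off the rank. By definition $D_{fe} = \bigoplus_{\mu \in \Sigma_e \setminus \Delta_{fe}} B_e$, and local finiteness of $\Gg$ makes $\Sigma_e$ finite, so this is a finitely generated free right $B_e$-module; hence its class in $K_0(B_e)$ is $|\Sigma_e \setminus \Delta_{fe}|\,[1_e]$. The definition of $\Delta_{fe}$ gives $|\Delta_{fe}| = \delta_{f,\ol{e}}$, whence $|\Sigma_e \setminus \Delta_{fe}| = |\Sigma_e| - \delta_{f,\ol{e}}$, yielding the claimed formula. I do not expect any step here to be a serious obstacle; the only point requiring genuine care is the reduction of the Kasparov product to the plain balanced tensor product together with the verification that the left $\CC$-action survives as scalar multiplication, which is precisely where the unitality of $\ol{\varphi}_{fe}$ enters.
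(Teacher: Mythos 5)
Your proposal is correct and follows essentially the same route as the paper: the paper's one-line proof likewise uses unitality of the left action $\ol{\varphi}_{fe}$ to identify the product class with $\bigl[\bigl(\CC,\bigoplus_{\mu\in\Sigma_e\setminus\Delta_{fe}}C^*(G_e),0\bigr)\bigr]$ (i.e.\ the balanced tensor product with zero operator, as justified by the paper's remark that Kasparov products of correspondences with compact left action and zero operator are just balanced tensor products) and then reads off the rank $|\Sigma_e|-\delta_{f,\ol{e}}$ of the free right $B_e$-module. Your write-up simply makes explicit the steps the paper leaves implicit.
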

\begin{proof}
	Since the left action of $C^*(G_f)$ on $D_{fe}$ is unital, it follows that
	\begin{equation*}
		[1_f] \otimes_B [D_{fe}] = \Big[\Big(\CC, \bigoplus_{\mu \in \Sigma_e \setminus \Delta_{fe}}C^*(G_e),0\Big)\Big] = (|\Sigma_e|-\delta_{f,\ol{e}}) [1_e].
		\qedhere
	\end{equation*}
\end{proof}

\begin{lem}\label{lem:eebarkk}
	Let $\Gg = (\Gamma,G)$ be a locally finite nonsingular graph of countable groups. Suppose that $v \in \Gamma^0$ is such that $G_v$ is abelian. Then for all $e \in \Gamma^1$ with $r(e) = v$ we have
	\[
		[D_{\ol{e}e}] = (|\Sigma_e| - 1)\id_{KK(C^*(G_{\ol{e}}),C^*(G_e))}\]
	in $KK(C^*(G_{\ol{e}}),C^*(G_e))$, where we identify $C^*(G_{\ol{e}})=C^*(G_e)$.
\end{lem}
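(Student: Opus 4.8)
The plan is to show that, under the hypothesis that $G_{r(e)}=G_v$ is abelian, the correspondence $D_{\ol{e}e}$ is nothing more than a direct sum of $|\Sigma_e|-1$ copies of the identity $C^*(G_e)$--$C^*(G_e)$-bimodule. Since the identity correspondence represents the unit $\id\in KK(C^*(G_e),C^*(G_e))$ and direct sums of correspondences with compact left action add in $KK$-theory, the claimed formula will then follow immediately.

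The heart of the argument is to identify the left action $\ol{\varphi}_{\ol{e}e}\colon C^*(G_{\ol{e}})\to\End_{B_e}^0(D_{\ol{e}e})$ of \eqref{eq:leftactionD}. First I would observe that because $G_{r(e)}$ is abelian, $\alpha_e(G_e)$ is a normal subgroup and the left-translation action of $\alpha_e(G_e)$ on the cosets $\Sigma_e=G_{r(e)}/\alpha_e(G_e)$ is trivial; explicitly, for $k\in G_{\ol{e}}=G_e$ and $\mu\in\Sigma_e$ one has $\alpha_e(k)\cdot\mu=\mu$. Feeding this into the defining formula \eqref{eq:cocyclee} for $c_e$ and using commutativity gives
\[
	c_{\ol{e}e}(k,\mu)=c_e(\alpha_e(k),\mu)=\alpha_e^{-1}\big((\alpha_e(k)\cdot\mu)^{-1}\alpha_e(k)\mu\big)=\alpha_e^{-1}\big(\mu^{-1}\alpha_e(k)\mu\big)=k,
\]
so the cocycle collapses to the identity.

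Substituting $\alpha_e(k)\cdot\mu=\mu$ and $c_{\ol{e}e}(k,\mu)=k$ into \eqref{eq:leftactionD}, the left action becomes
\[
	\ol{\varphi}_{\ol{e}e}(a)\xi(\mu,h)=\sum_{k\in G_e}a(k^{-1})\,\xi(\mu,kh),
\]
which is precisely convolution by $a$ in the group variable, performed coordinate-wise in $\mu$. Thus each of the $|\Sigma_e|-1$ summands $B_e$ of $D_{\ol{e}e}=\bigoplus_{\mu\in\Sigma_e\setminus\{1\}}B_e$ carries the standard left-multiplication action of $C^*(G_e)$ and is therefore the identity $C^*(G_e)$--$C^*(G_e)$-correspondence. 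Under the identification $C^*(G_{\ol{e}})=C^*(G_e)$ this exhibits $D_{\ol{e}e}$ as a direct sum of $|\Sigma_e|-1$ copies of the identity bimodule, whence $[D_{\ol{e}e}]=(|\Sigma_e|-1)\id$ in $KK(C^*(G_{\ol{e}}),C^*(G_e))$.

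The only subtle point---and the main thing to get right---is the collapse of the cocycle $c_{\ol{e}e}$ to the identity, which is exactly where abelianness of $G_{r(e)}$ is used; every other step is formal, relying on additivity of the $KK$-class under direct sums and on the identity correspondence representing the unit of $KK$. I would also note that since we work throughout with the zero-operator picture of $KK$-classes of correspondences with compact left action (as set up before the lemma, and guaranteed here by Proposition~\ref{prop:Dfe}), no nuclearity or further Kasparov-module bookkeeping is required.
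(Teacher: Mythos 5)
Your proof is correct and follows essentially the same route as the paper: abelianness of $G_v$ makes the translation action of $\alpha_e(G_e)$ on $\Sigma_e$ trivial and collapses the cocycle $c_{\ol{e}e}$ to the identity, so \eqref{eq:leftactionD} exhibits $D_{\ol{e}e}$ as a direct sum of $|\Sigma_e|-1$ copies of the identity correspondence, whence the $KK$-class is $(|\Sigma_e|-1)\id$. The paper compresses this into the observation that left and right cosets (hence the left and right actions on $\bigoplus_{\mu\in\Sigma_e\setminus\{1\}}C^*(G_e)$) coincide; your explicit cocycle computation is exactly the detail behind that remark.
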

\begin{proof}
	Since $G_v$ is abelian, the left and right cosets of $\alpha_e(G_e)$ in $G_v$ are equal. 
	The result then follows immediately from \eqref{eq:leftactionD}, since the left and right actions of $C^*(G_e)$ on $D_{\ol{e}e} = \bigoplus_{\mu \in \Sigma_e \setminus \{1\} }C^*(G_e)$ agree after identifying $G_e$ with $G_{\ol{e}}$. Consequently,
	\[
		[(C^*(G_e), D_{\ol{e}e} , 0)] = \sum_{i=1}^{|\Sigma_e|-1} \id_{KK(C^*(G_{\ol{e}}),C^*(G_e))}.
	\]
	Note that in the case where $\alpha_e:\,G_e\to G_{r(e)}$ is surjective we have $[D_{\ol{e}e}] = 0$.
\end{proof}

\subsection{Edges of groups}
We can now say something about the $K$-theory of the $C^*$-algebra of an edge of groups,
that is a graph of groups consisting of a single edge.
Consider the edge of groups,
\vspace*{10pt}
\[
	\Gg =
	\begin{tikzpicture}
		[baseline=0.25ex,vertex/.style={circle, fill=black, inner sep=2pt}]

		\node[vertex,blue] (a) at (0,0) {};%
		\node[vertex,red] (b) at (3,0) {};%
		\node[inner sep = 2pt, anchor=south] at (a.north) {$G_v$};%
		\node[inner sep = 2pt, anchor=south] at (b.north) {$G_w$};%
		\draw[-latex,thick] (b) -- (a)
		node[pos=0.5, anchor=south,inner sep=2.5pt] {$G_e$}
		node[pos=0.25, anchor=south,inner sep=2pt] {\scriptsize$\alpha_{\ol{e}}$}
		node[pos=0.75, anchor=south,inner sep=2pt] {\scriptsize$\alpha_{e}$} ;
	\end{tikzpicture}.
	\vspace*{10pt}
\]
Assuming that $\Gg$ is non-singular it follows that $|\Sigma_e| \ge 2$ and $|\Sigma_{\ol{e}}| \ge 2$. Then $K_i(B) \cong K_i(C^*(G_e)) \oplus K_i(C^*(G_{\ol{e}}))$. The only paths of length two in $\Gg$ are $e \ol{e}$ and $\ol{e}e$ so $[D] = [D_{\ol{e}e}] \oplus [D_{e \ol{e}}]$.

If we assume that $G_v$ and $G_w$ are abelian, then
Lemma~\ref{lem:eebarkk} implies that for a class $(x,y) \in   K_i(B)$ we have
\begin{equation}
	\label{eq:multieeb}
	(x,y)\otimes_B (\id_* - [D] ) =\big (x - (|\Sigma_e|-1)y, y - (|\Sigma_{\ol{e}}|-1)x\big).
\end{equation}
We consider a few specific cases of edges of groups.
\begin{example}\label{ex:Ktheoryeog1}
	Fix $m,n \ge 2$ and let $\Gg = (G,\Gamma)$ be an edge of groups with $G_v = \ZZ/n\ZZ$, $G_w = \ZZ/m\ZZ$, $G_e = \{0\}$, and monomorphisms $\alpha_e$ and $\alpha_{\ol{e}}$ being the inclusion of $\{0\}$. Then $|\Sigma_e| = n$ and $|\Sigma_{\ol{e}}| = m$.
	In this case we have $C^*(G_e) = C^*(G_{\ol{e}}) \cong \CC$ so
	\[
		K_0(B) = \ZZ[1_e] \oplus \ZZ[1_{\ol{e}}] \quad \text{and} \quad K_1(B) = 0.
	\]

	Applying the six-term sequence of Theorem~\ref{thm:ktheory}, it follows that $K_0(C^*(\Gg)) \cong \coker(\Lambda_0)$ and $K_1(C^*(\Gg)) \cong \ker(\Lambda_0)$ where $\Lambda_0 \colon \ZZ[1_e] \oplus \ZZ[1_{\ol{e}}] \to \ZZ[1_e] \oplus \ZZ[1_{\ol{e}}]$ is the $\ZZ$-linear map given by \eqref{eq:multieeb}.
	Treating $[1_e]$ as the column $(\begin{smallmatrix}
			1 & 0
		\end{smallmatrix})^T$ and $[1_{\ol{e}}]$ as the column $(\begin{smallmatrix}
			0 & 1
		\end{smallmatrix})^T$
	the map $\Lambda_0$ has matrix representation
	\[
		\begin{pmatrix}
			1 & n-1 \\ m-1 & 1
		\end{pmatrix}
		\text{ with Smith normal form }
		\begin{pmatrix}
			1 & 0 \\ 0 & 1 - (n-1)(m-1)
		\end{pmatrix}.
	\]
	It now follows that
	\[
		K_0(C^*(\Gg)) \cong \frac{\ZZ}{{(1-(n-1)(m-1))}\ZZ}
		\quad \text{and} \quad
		K_1(C^*(\Gg)) \cong \begin{cases}
			\ZZ & \text{if } m=n=2; \\
			0   & \text{otherwise.}
		\end{cases}
	\]

	Since the edge groups are trivial in this example, \cite[Theorem 3.6]{BMPST17} shows that $C^*(\Gg)$ is isomorphic to the $C^*$-algebra of a directed graph $E_\Gg$. One can readily check that the $K$-theory computed above agrees with that of the directed graph $C^*$-algebra $C^*(E_\Gg)$. For the specific case of the action of $PSL_2(\ZZ) \cong \ZZ_2 * \ZZ_3$ on the boundary $\partial X$ of the tree $X$ considered in Example~\ref{ex:PSL2Z} we have $K_i(C(\partial X) \rtimes PSL_2(\ZZ)  ) = 0$ for $i =0,1$.
\end{example}

\begin{example}\label{ex:Ktheoryeog2}
	Fix $m,n \in \ZZ$ such that $|m|,|n| \ge 2$ and let $\Gg = (G,\Gamma)$ be an edge of groups with $G_v = G_w = G_e = \ZZ$. Let $\alpha_e$ denote multiplication by $n$ and let $\alpha_{\ol{e}}$ denote multiplication by $m$. Then $|\Sigma_e| = |n|$, $|\Sigma_{\ol{e}}| = |m|$, and $C^*(G_e) = C^*(G_{\ol{e}}) \cong C(\TT)$. Letting $u_e$ denote the unitary $z \mapsto z$ in $K_1(C^*(G_e))$, it follows that
	\[
		K_0(B) = \ZZ[1_e] \oplus \ZZ[1_{\ol{e}}] \quad
		\text{and} \quad K_1(B) = \ZZ[u_e] \oplus \ZZ[u_{\ol{e}}].
	\]
	For $i=0,1$ the groups $\ker(\Lambda_i)$ are free abelian since they are subgroups of the free abelian group $K_i(B)$.   It now follows from Theorem~\ref{thm:ktheory} that
	$
		K_i(C^*(\Gg)) \cong \ker(\Lambda_{1-i}) \oplus \coker(\Lambda_i).
	$
	A similar argument to Example~\ref{ex:Ktheoryeog1} now shows that for $i = 0,1$,
	\[
		K_i(C^*(\Gg)) \cong \begin{cases}
			\ZZ                                 & \text{if } |n|=|m|=2; \\
			\frac{\ZZ}{{(1-(|n|-1)(|m|-1))}\ZZ} & \text{otherwise.}
		\end{cases}
	\]
\end{example}

\subsection{Graphs of trivial groups}\label{subsec:trivialgog}
Suppose that $\Gg = (\Gamma, G)$ is a locally finite nonsingular graph of groups for which each edge group and vertex group is trivial. If $\Gamma$ is also a finite graph, then the algebra $C^*(\Gg)$ is the Cuntz-Krieger algebra associated to $\Gamma$ by Cornelissen, Lorscheid, and Marcolli in \cite{CLM08} (see \cite[Remark 3.10]{BMPST17}).
In this case $C^*(G_e) \cong \CC$ for all $e \in \Gamma^1$, and for all $fe \in \Gamma^2$,
\[
	D_{fe} \cong \begin{cases}
		\CC & \text{if } f \ne \ol{e}; \\
		0   & \text{if } f = \ol{e}.
	\end{cases}
\]
In particular, $K_0(B) = \bigoplus_{e \in \Gamma^1}  \ZZ[1_e]$.
Since $K_1(B) = 0$, it follows from the six-term sequence of Theorem \ref{thm:ktheory} that
\[
	K_0(C^*(\Gg)) \cong \coker(\Lambda_0) \quad \text{and} \quad K_1(C^*(\Gg)) \cong \ker(\Lambda_1).
\]
On generating elements $[1_f] \in K_0(B)$ we have
\[
	[1_f]  \ox_B [D] = [1_f]  \ox_B \sum_{r(e) = s(f)} [D_{fe}] =  \Big(\sum_{r(e) = s(f)} [1_e]  \Big) - [1_{\ol{f}}] ,
\]
so that
\[
	\Lambda_0([1_f]) = [1_f] + [1_{\ol{f}}] - \sum_{r(e) = s(f)} [1_e].
\]
In particular, the Kasparov product $ \cdot \ox_B [D] \colon K_0(B) \to K_0(B)$ agrees with the operator $T$ of \cite[Section 2.2]{CLM08}. Accordingly, if $\Gamma$ is a finite graph then \cite[Theorem 1]{CLM08} can be used to compute $K_0(C^*(\Gg))$ and $K_1(C^*(\Gg))$ in terms of the number of the Betti number of $\Gamma$.

\subsection{Generalised Baumslag-Solitar graphs of groups}
A \hl{generalised Baumslag-Solitar (GBS) graph of groups} is a graph of groups where all of the edge and vertex groups are isomorphic to $\ZZ$. The fundamental group of such a graph of groups is called a \hl{generalised Baumslag-Solitar (GBS)  group}. A survey of results concerning GBS groups has been compiled by Robinson \cite{Rob15}.

For the remainder of this subsection, let $\Gg = (\Gamma,G)$ be a GBS graph of groups. For each  $e \in \Gamma^1$ we have $G_e \cong \ZZ$, and so $C^*(G_e) \cong C(\TT)$, and $K_i(C^*(G_e)) \cong \ZZ$ for $i=0,1$.

\begin{prop}\label{prop:genbs}
	Let $\Gg = (\Gamma,G)$ be a locally finite nonsingular GBS graph of groups. Let $fe \in \Gamma^2$ with $f \ne \ol{e}$ and suppose that $\alpha_e$ is given by multiplication by $n$, and $\alpha_{\ol{f}}$ is given by multiplication by $m$ for some $n,\,m \in \ZZ \setminus \{0\}$. Then the Kasparov product $\cdot \otimes_{A} [D_{fe}] \colon K_i(
		C^*(G_f)) \to K_i(C^*(G_e))$ acts as multiplication by $|n|$ on $K_0$ and multiplication by $\sgn(n)m$ on $K_1$, where $\sgn(x) = x/|x|$ is the sign function.
\end{prop}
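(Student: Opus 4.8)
The plan is to reduce the computation to the induced map on $K$-theory of the left-action $*$-homomorphism, and then to an explicit winding-number calculation over the circle. Since $G_f \cong G_e \cong \ZZ$, identify $B_f = C^*(G_f)$ and $B_e = C^*(G_e)$ with $C(\TT)$, writing $u_f,u_e$ for the unitaries corresponding to the generator $1$; these generate $K_1(B_f),K_1(B_e) \cong \ZZ$, while $[1_f],[1_e]$ generate the $K_0$-groups. As $\alpha_e$ is multiplication by $n$, the transversal $\Sigma_e$ for $G_{r(e)}/\alpha_e(G_e) = \ZZ/n\ZZ$ has $|\Sigma_e| = |n|$ elements, and since $f \ne \ol{e}$ we have $\Delta_{fe} = \varnothing$, so $D_{fe} = \bigoplus_{\mu\in\Sigma_e}B_e$ is free of rank $|n|$. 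The $K_0$ claim is then immediate from Lemma~\ref{lem:kunit}: $[1_f]\otimes_B[D_{fe}] = (|\Sigma_e| - \delta_{f,\ol{e}})[1_e] = |n|[1_e]$, so on $K_0 = \ZZ[1_f]$ the product is multiplication by $|n|$.

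For $K_1$ I would use that, because $D_{fe}$ is free, the left action is a unital $*$-homomorphism $\ol{\varphi}_{fe}\colon B_f \to \End^0_{B_e}(D_{fe}) \cong M_{|n|}(B_e)$, and the Kasparov product $\cdot\otimes_{B_f}[D_{fe}]$ on $K$-theory is just the induced map $(\ol{\varphi}_{fe})_*$ followed by the stability isomorphism $K_*(M_{|n|}(B_e)) \cong K_*(B_e)$. Thus $[u_f]\otimes_{B_f}[D_{fe}]$ is the class in $K_1(C(\TT)) \cong \ZZ$ of the unitary $W := \ol{\varphi}_{fe}(u_f) \in M_{|n|}(C(\TT))$, which is computed by the winding number of $\det W \colon \TT \to \CC^\times$. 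The next step is to write $W$ explicitly: fixing the transversal $\Sigma_e = \{0,1,\dots,|n|-1\}$ for $\ZZ/n\ZZ$ and feeding $u_f$ (the point mass at the generator) into \eqref{eq:leftactionD}, one finds that $W$ is a generalized permutation unitary. It sends the $\mu$-th summand to the $\sigma(\mu)$-th, where $\sigma(\mu) = (-m)\cdot\mu = (\mu-m)\bmod|n|$ is translation by $-m$ (coming from $\alpha_{\ol{f}}(-1) = -m$ acting on $\Sigma_e$), twisting the entry by the scalar unitary $u_e^{-c_e(-m,\mu)}$, with $c_e$ the cocycle of \eqref{eq:cocyclee}. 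As $\sigma$ is a bijection of $\Sigma_e$ it contributes only a constant sign to $\det W$, giving $\det W = \pm\, u_e^{-\sum_\mu c_e(-m,\mu)}$ with winding number $-\sum_{\mu\in\Sigma_e}c_e(-m,\mu)$.

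Finally I would evaluate the sum. Using $c_e(g,\mu) = \tfrac1n\big((g+\mu) - (g\cdot\mu)\big)$ from \eqref{eq:cocyclee}, where $g\cdot\mu = (g+\mu)\bmod|n|$ is the representative in $\Sigma_e$, we obtain
\[
	\sum_{\mu\in\Sigma_e} c_e(-m,\mu) = \frac1n\sum_{\mu=0}^{|n|-1}\Big((-m+\mu) - \big((-m+\mu)\bmod|n|\big)\Big) = \frac{-|n|m}{n} = -\sgn(n)\,m,
\]
since $\mu \mapsto (-m+\mu)\bmod|n|$ permutes $\Sigma_e$, so the remainder terms cancel against $\sum_\mu \mu$. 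Hence the winding number of $\det W$ is $\sgn(n)\,m$, i.e. $[u_f]\otimes_{B_f}[D_{fe}] = \sgn(n)m\,[u_e]$, which establishes the $K_1$ statement.

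The main obstacle is pinning down the signs, both the precise form of $W$ dictated by the convolution conventions in \eqref{eq:leftactionD} (which fixes whether $\sigma$ is translation by $+m$ or $-m$ and the sign of the twisting exponent) and the orientation convention identifying $K_1(C(\TT))$ with $\ZZ$ by the determinant winding number. These two potential sign slips must be tracked consistently so that they combine to $\sgn(n)m$ rather than $-\sgn(n)m$; the $K_0$ computation via Lemma~\ref{lem:kunit}, together with the required independence of the answer from the choice of transversal $\Sigma_e$, provides useful consistency checks.
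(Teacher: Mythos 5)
Your proof is correct, and its $K_1$ argument takes a genuinely different route from the paper's. You identify $[D_{fe}]$ with the class of the unital homomorphism $\ol{\varphi}_{fe}\colon B_f\to \End^0_{B_e}(D_{fe})\cong M_{|n|}(B_e)$ composed with the stability (Morita) isomorphism, so that $[u_f]\otimes_B[D_{fe}]$ becomes the $K_1$-class of the generalized permutation unitary $W=\ol{\varphi}_{fe}(u_f)\in M_{|n|}(C(\TT))$, which you evaluate by the winding number of $\det W$; your extraction of the twisting exponents from \eqref{eq:leftactionD}, the cocycle sum $\sum_{\mu\in\Sigma_e}c_e(-m,\mu)=-\sgn(n)m$ (using that $\mu\mapsto(-m+\mu)\bmod|n|$ permutes $\Sigma_e$), and the observation that the permutation contributes only a constant sign are all correct, and they yield $\sgn(n)m$ as required. (Whether $W$ sends the $\mu$-th summand to the $\sigma(\mu)$-th or the reverse is immaterial here, since both the sign and the exponent sum are unchanged under inverting the permutation.) The paper instead sidesteps the permutation and cocycle bookkeeping entirely: it notes that the left action of the power $u_f^n$ is simply right multiplication by $u_e^m$ (the element $\alpha_{\ol{f}}(-n)$ acts trivially on $\Sigma_e$ with constant cocycle $-m$), computes $[u_f^n]\otimes[D_{fe}]=|n|m[u_e]$ using the Cayley-transform (unbounded Kasparov module) description of $K_1$-classes from \cite{BKR19}, and then divides by $n$, which is legitimate because $K_1(C(\TT))\cong\ZZ$ is torsion-free. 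Your route buys elementarity --- only the standard facts that the product with a homomorphism class is the induced map, that stability forgets the block structure, and that $K_1$ of matrices over $C(\TT)$ is the winding number of the determinant --- at the price of the explicit cocycle sum and the sign tracking you rightly flag; the paper's route buys freedom from exactly that combinatorics, at the price of the unbounded machinery and the divisibility argument. Your $K_0$ argument via Lemma~\ref{lem:kunit} coincides with the paper's.
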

\begin{proof}
	The $K_0$-statement follows from Lemma~\ref{lem:kunit} and the fact $K_0(C(\TT))$ is generated by the class of the unit.
	For $K_1$ fix a transversal $\Sigma_e =\{0,1,\ldots,|n|-1\}$.
	The action of $x \in G_{r(e)}$ on $\mu \in \Sigma_e$ is given by $x \cdot \mu = (x + \mu) \bmod |n|$. Using \eqref{eq:cocyclee} the corresponding cocycle is given by $c_{e}(x,\mu) = \frac{x + \mu - x \cdot \mu}{n}$: the result of integer division of $x + \mu$ by $n$. The induced action of $x \in G_f$ via $\ol{f}$ on $\mu \in \Sigma_e$ is given by $x \cdot \mu = (mx + \mu) \bmod |n|$.  The cocycle for the action of $G_f$ is given by  $c_{fe}(x,\mu) = \frac{mx+\mu - x \cdot \mu}{n}$.

	It follows from Equation~\eqref{eq:leftactionD} that the left action of the point mass $\epsilon_n^{f} \in C_c(\ZZ) \subset C^*(G_f)$ on $\xi \in C_c(\ZZ)^{|n|} \subset D_{fe}$ is given by
	\[
		\varphi_{fe}(\epsilon_n^{f}) \xi(\mu,k) = \xi\big((-n) \cdot \mu, c_{fe}(-n,\mu ) + k\big) = \xi(\mu,k-m) =( \xi \cdot \epsilon_m^{e})(\mu,k).
	\]
	Let $u_{f}$ and $u_{e}$ denote the unitary $z \mapsto z$ in $C^*(G_f)$ and $C^*(G_e)$, respectively. The preceding computation implies that $\varphi_{fe}(u_f^n)\xi = \xi \cdot u_e^m$ for all $\xi \in D_{fe}$.

	Recall that the group $K_1(C(\TT))\cong \ZZ$ is generated by
	the class of the unitary $z \mapsto z$. To facilitate
	the computation of Kasparov products we utilise a special case of
	\cite[Section 3]{BKR19}. By utilising the isomorphism $K_1(C(\TT)) \cong KK^1(\CC,C(\TT))$ we obtain  a new generator,  the class of the (unbounded) Kasparov module $(\CC, \Xi,\Cc(u))$, where $\Cc(u)$ denotes the Cayley transform of the unitary $u \colon z \mapsto z$. In particular, $\Cc(u)$ is the unbounded regular self-adjoint operator with domain $\dom(\Cc(u)) = (u-1)C(\TT)$ satisfying $\Cc(u)v = i\frac{u+1}{u-1}v$ for all $v \in \dom(\Cc(u))$; and $\Xi$ is the right $C(\TT)$-module given by the closure of $\dom(\Cc(u))$ in the right $C(\TT)$-module $C(\TT)$.

	For each $n \in G_f$ denote the closure of $(u_f^n-1)C(\TT)$ in $C(\TT)$ by $\Xi_{f,n}$. In $C^*(G_f) \ox_{C^*(G_f)} D_{fe}$ we have
	\[
		(u_f^n-1) \ox \xi = 1 \ox \varphi_{fe}(u_f^n- 1)\xi= (1 \ox \xi)\cdot (u_e^m-1)
	\]
	for all $\xi \in D_{fe}$.
	Consequently, $\Xi_{f,n} \ox_{C^*(G_f)} D_{fe}$ is isomorphic to  $\Xi_{e,m}^{|n|}$. Moreover, it follows that in $K_1(C^*(G_e))$,
	\begin{align*}
		n[u_f] \otimes_A [D_{fe}] & =[u_f^n] \otimes_A [D_{fe}]                                              \\
		                          & = [(\CC, \Xi_{f,n}, \Cc(u_f^n)] \otimes_{C^*(G_f)} [(C^*(G_f),D_{fe},0)] \\
		                          & = [(\CC,\Xi_{f,n} \otimes_{C^*(G_f)} D_{fe},\Cc(u_f^n) \otimes 1)]       \\
		                          & = [(\CC, \Xi_{e,m}^{|n|}, \Cc(\diag(u_e^m)) ]                            \\
		                          & = |n| [u_e^m]                                                            \\
		                          & = |n|m [u_e].
	\end{align*}
	Dividing through by $n$ gives $[u_f] \ox_A [D_{fe}] = \sgn(n)m [u_e]$.
\end{proof}

\begin{example}
	\label{ex:loop}
	Fix $m,n \in \ZZ \setminus\{0\}$ and consider the loop of groups $\Gg$ from Example~\ref{ex:DinfBS} given by,
	\[
		\begin{tikzpicture}[scale=2,baseline=0.5ex, vertex/.style={circle, fill=black, inner sep=2pt}]
			\begin{scope}
				\clip (-1,-.6) rectangle (1.5,.6);

				\node[vertex,blue] (0_0) at (0,0) [circle] {};


				\draw[-stealth,thick] (.75,0) .. controls (.75,.5) and (.1,.5) .. (0_0) node[pos=0, inner sep=3pt, anchor=west] {$G_e = \ZZ$}
				node[pos=0.5, anchor=south,inner sep=3pt]{\scriptsize $ \alpha_{{e}} \colon k \mapsto nk$};

				\draw[thick] (.75,0) .. controls (.75,-.5) and (.1,-.5) .. (0_0) node[pos=0.5, anchor=north,inner sep=3pt]{\scriptsize $ \alpha_{\ol{e}} \colon k \mapsto mk$};

				\draw (-.45,0) node {$G_v = \ZZ$};

			\end{scope}

		\end{tikzpicture}.
	\]
	Then $\Gg$ is the quotient graph of groups for an action of the Baumslag-Solitar group $BS(n,m) = \langle a,t \mid ta^nt^{-1} = a^m \rangle$ acting on a regular $(|m|+|n|)$-valent tree. The case where $n=2$ and $m=1$ was considered in detail in Example \ref{ex:DinfBS}.

	Observe that $B = C^*(G_e) \oplus C^*(G_{\ol{e}}) \cong C(\TT)^2$, so
	\[
		K_0(B) \cong \ZZ[1_e] \oplus \ZZ[1_{\ol{e}}] \quad \text{and} \quad
		K_1(B) \cong \ZZ[u_e] \oplus \ZZ[u_{\ol{e}}].
	\]

	Treat $[1_e]$ as the column $(\begin{smallmatrix}
			1 & 0
		\end{smallmatrix})^T$ and $[1_{\ol{e}}]$ as the column $(\begin{smallmatrix}
			0 & 1
		\end{smallmatrix})^T$. Then according to both Proposition~\ref{prop:genbs} and Lemma~\ref{lem:eebarkk} the map $\Lambda_0: K_0(A) \to K_0(A)$ of Theorem~\ref{thm:ktheory} can be identified with left multiplication by the matrix
	\[
		\begin{pmatrix}
			1-|n| &
			|n|-1   \\ |m|-1 & 1-|m|
		\end{pmatrix}
		\text{ which has Smith normal form }
		\begin{pmatrix}
			\gcd(1-|n|,1-|m|) & 0 \\ 0 & 0
		\end{pmatrix}
	\]
	by \cite[Theorem 2.4]{Sta16}.
	Here we recall that $\gcd(0,a) = a$ for $a \in \ZZ$ and that the $\gcd$ is defined up to sign. As convention, we take the positive $\gcd$, but this does not affect any further computations. Consequently,
	\begin{align*}
		\ker(\Lambda_0) \cong
		\begin{cases}
			\ZZ^2 & \text{if } |m|=|n| = 1; \\
			\ZZ   & \text{otherwise;}
		\end{cases}   \quad \text{and} \quad \coker(\Lambda_0) \cong \frac{\ZZ}{\gcd(1-|n|,1-|m|) \ZZ}	 \oplus \ZZ.
	\end{align*}

	Now, treat $[u_e]$ as the column $(\begin{smallmatrix}
			1 & 0
		\end{smallmatrix})^T$ and $[u_{\ol{e}}]$ as the column $(\begin{smallmatrix}
			0 & 1
		\end{smallmatrix})^T$. Then Proposition~\ref{prop:genbs} together with Lemma~\ref{lem:eebarkk} imply that the map $\Lambda_1\colon K_1(B) \to K_1(B)$ can be identified with left multiplication by the matrix
	\[
		\begin{pmatrix}
			1-\sgn(n)m &
			|n|-1        \\ |m|-1 & 1-\sgn(m)n
		\end{pmatrix}.
	\]
	We consider separately the cases where $mn > 0$ and where $mn < 0$.

	First suppose that $mn > 0$, in which case $\sgn(n)m = |m|$ and $\sgn(m)n = |n|$. A similar computation to that of $\Lambda_0$ shows that for $mn \ge 0$ we have,
	\[
		\ker(\Lambda_1) \cong \begin{cases}
			\ZZ^2 & \text{if } mn = 1; \\
			\ZZ   & \text{otherwise;}
		\end{cases} \quad \text{and} \quad \coker(\Lambda_1) \cong \frac{\ZZ}{\gcd(1-|n|,1-|m|) \ZZ}	 \oplus \ZZ.
	\]
	Now suppose that $mn <0$ so that $\sgn(m)n = -|n|$ and $\sgn(n)m = -|m|$. It follows from \cite[Theorem 2.4]{Sta16} that the matrix associated to $\Lambda_1$ has Smith normal form,
	\[
		\begin{pmatrix}
			\gcd(1+|m|,1-|m|,1+|n|,1-|n|) & 0 \\ 0 & \frac{2(|m|+|n|)}{\gcd(1+|m|,1-|m|,1+|n|,1-|n|) }
		\end{pmatrix}.
	\]
	Observe that
	\[
		\gcd(1+|m|,1-|m|,1+|n|,1-|n|) =
		\begin{cases}
			2 & \text{if } mn \text{ is odd}   \\
			1 & \text{if } mn \text{ is even,}
		\end{cases}
	\]
	so
	\begin{align*}
		\ker(\Lambda_1) = 0 \quad \text{and} \quad \coker(\Lambda_1) \cong
		\begin{cases}
			\frac{\ZZ}{2\ZZ} \oplus \frac{\ZZ}{(|m|+|n|)\ZZ} & \text{if } mn \text{ is odd}  \\
			\frac{\ZZ}{2(|m|+|n|)\ZZ}                        & \text{if } mn \text{ is even.}
		\end{cases}
	\end{align*}

	Since $K_i(B)$ is free abelian for $i=0,1$ it now follows from a standard argument with the six-term sequence of Theorem~\ref{thm:ktheory} that $K_i(C^*(\Gg)) \cong \ker(\Lambda_{1-i}) \oplus \coker(\Lambda_i)$. To summarise,
	\begin{align*}
		K_0(C^*(\Gg)) & \cong
		\begin{cases}
			\ZZ^4
			                                             & \text{if } mn =1;              \\
			\ZZ^2 \oplus\frac{\ZZ}{\gcd(1-|n|,1-|m|) \ZZ}
			                                             & \text{if } mn > 1;             \\
			\ZZ \oplus \frac{\ZZ}{\gcd(1-|n|,1-|m|) \ZZ} & \text{if } mn < 0; \text{ and}
		\end{cases} \\
		K_1(C^*(\Gg)) & \cong
		\begin{cases}
			\ZZ^4 & \text{if } mn = 1;                              \\
			\ZZ^2 \oplus \frac{\ZZ}{\gcd(1-|n|,1-|m|) \ZZ}
			      & \text{if } mn > 1;                              \\
			(\ZZ \oplus \frac{\ZZ}{2\ZZ})^2
			      & \text{if } mn = -1;                             \\
			\ZZ \oplus \frac{\ZZ}{2\ZZ} \oplus \frac{\ZZ}{(|m|+|n|)\ZZ}
			      & \text{if } mn < -1 \text{ is odd}; \text{ and } \\
			\ZZ \oplus \frac{\ZZ}{2(|m|+|n|)}
			      & \text{if } mn<-1 \text{ is even.}
		\end{cases}
	\end{align*}
	We now consider a few specific cases. If $m = n = 1$ then the universal covering tree $X_\Gg$ is an infinite $2$-regular tree with $2$ boundary points. Hence,
	\[
		C^*(\Gg) \cong C(\partial X_\Gg) \rtimes \pi_1(\Gg) \cong \CC^2 \rtimes_{\id} \ZZ^2 \cong \CC^2 \ox C^*(\ZZ^2) \cong C(\TT^2)^2.
	\]
	The $K$-groups for $C(\TT^2)^2$ agree with the $m=n=1$ case above. Similarly, if $m=1$ and $n=-1$, then
	\[
		C^*(\Gg) \cong C( \partial X_\Gg) \rtimes \pi_1(\Gg) \cong \CC^2 \rtimes_{\id} BS(1,-1) \cong \CC^2 \ox C^*(BS(1,-1)) \cong (C(\TT) \rtimes_{\gamma} \ZZ )^2,
	\]
	where $\gamma \in \Aut(C(\TT))$ is induced by conjugation on $\TT$. An application of the Pimsner-Voiculescu sequence for $C(\TT) \rtimes_{\gamma} \ZZ$ shows that the $K$-groups above when $m=1$ and $n=-1$ agree with the $K$-groups of $(C(\TT) \rtimes_{\gamma} \ZZ )^2$.

	Finally, we remark that \cite[Corollary 7.15]{BMPST17} states that $C^*(\Gg)$ is a Kirchberg algebra if and only if $|m|,|n| \ge 2$ and $|m| \ne |n|$ . Hence the preceding $K$-theory computations can be used to classify $C^*(\Gg) \cong \Oo_D$ in this case.
\end{example}

For a general locally finite nonsingular GBS graph of groups we have the following result.
\begin{thm}
	Let $\Gg = (\Gamma,G)$ be a locally finite nonsingular GBS graph of groups. Suppose that for each $e \in \Gamma^1$ the map $\alpha_e$ is given by multiplication by $m_e \in \ZZ$. Let $1_e$ denote the identity in $C^*(G_e) \cong C(\TT)$ and let $u_e$ denote the unitary $z \mapsto z$ in $C^*(G_e)$. Then
	\[
		K_0(C^*(\Gg)) \cong \coker(\Lambda_0) \oplus \ker(\Lambda_1) \quad \text{and} \quad K_1(C^*(\Gg)) \cong \coker(\Lambda_1) \oplus \ker(\Lambda_0),
	\]
	where $\Lambda_0 \colon \bigoplus_{e \in \Gamma^1} \ZZ[1_e] \to \bigoplus_{e \in \Gamma^1} \ZZ[1_e]$ and $\Lambda_1 \colon  \bigoplus_{e \in \Gamma^1} \ZZ[u_e] \to  \bigoplus_{e \in \Gamma^1} \ZZ[u_e]$ are given by
	\begin{align*}
		\Lambda_0([1_f]) & = [1_f] - (|m_{\ol{f}}|-1)[1_{\ol{f}}]\,  - \sum_{\substack{r(e) = s(f) \\ f \ne \ol{e}}}|m_e| [1_e]
		\quad \text{and}                                                                           \\
		\Lambda_1([u_f]) & = [u_f] - (|m_{\ol{f}}|-1)[u_{\ol{f}}]\, - \sum_{\substack{r(e) = s(f)  \\ f \ne \ol{e}}} \sgn(m_e) m_{\ol{f}} [u_e].
	\end{align*}
\end{thm}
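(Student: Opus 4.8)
The plan is to feed everything into the six-term exact sequence of Theorem~\ref{thm:ktheory} and then read off the two $K$-groups, so that the work divides into an abstract splitting argument and two explicit computations of the maps $\Lambda_0$ and $\Lambda_1$ on generators.

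First I would record that, since $\Gg$ is a GBS graph of groups, $C^*(G_e) \cong C(\TT)$ for every $e \in \Gamma^1$, so $K_0(C^*(G_e)) \cong \ZZ[1_e]$ and $K_1(C^*(G_e)) \cong \ZZ[u_e]$. Hence $K_i(B) = \bigoplus_{e \in \Gamma^1} K_i(C^*(G_e))$ is free abelian for $i = 0,1$. Using exactness at the two copies of $K_0(B)$ and at $K_1(B)$, together with the isomorphism $m$ of Corollary~\ref{cor:summary}, the six-term sequence yields the short exact sequence $0 \to \coker(\Lambda_0) \to K_0(C^*(\Gg)) \to \ker(\Lambda_1) \to 0$, and symmetrically $0 \to \coker(\Lambda_1) \to K_1(C^*(\Gg)) \to \ker(\Lambda_0) \to 0$. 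Because $\ker(\Lambda_1)$ is a subgroup of the free abelian group $K_1(B)$ it is itself free, hence projective, so the first sequence splits and $K_0(C^*(\Gg)) \cong \coker(\Lambda_0) \oplus \ker(\Lambda_1)$; the $K_1$ case is identical.

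It then remains to compute $\Lambda_i = \cdot \otimes_A (\id - [D])$ on the generating classes, i.e. $\Lambda_i([x]) = [x] - [x] \otimes_B [D]$. Using the adjacency decomposition $[1_f] \otimes_B [D] = \sum_{r(e)=s(f)} [1_f] \otimes_B [D_{fe}]$, I would split the length-two paths $fe$ into the backtracking path $f\ol{f}$ (the case $f = \ol{e}$) and the remaining paths with $f \ne \ol{e}$. For $K_0$ I apply Lemma~\ref{lem:kunit}, noting that for a GBS graph $|\Sigma_e| = |m_e|$: the non-backtracking terms contribute $|m_e|[1_e]$ and the backtracking term contributes $(|\Sigma_{\ol{f}}|-1)[1_{\ol{f}}] = (|m_{\ol{f}}|-1)[1_{\ol{f}}]$, which gives the stated formula for $\Lambda_0$. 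For $K_1$ the non-backtracking terms are governed by Proposition~\ref{prop:genbs} (with $n = m_e$ and $m = m_{\ol{f}}$, so the multiplier is $\sgn(m_e)\,m_{\ol{f}}$), while the backtracking term is handled by Lemma~\ref{lem:eebarkk} (applicable since the vertex group $\ZZ$ is abelian), contributing $(|m_{\ol{f}}|-1)[u_{\ol{f}}]$; subtracting from $[u_f]$ yields the stated formula for $\Lambda_1$.

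The main obstacle I expect is the careful separation of the backtracking edge $\ol{f}$ from the other outgoing edges: the correspondence $D_{f\ol{f}}$ has the identity coset deleted (so its rank drops by one, the source of the $-1$ in $|m_{\ol{f}}|-1$), and one must keep the identification $C^*(G_f) = C^*(G_{\ol{f}})$ straight when invoking Lemma~\ref{lem:eebarkk}. Confirming that the sign $\sgn(m_e)$ appears in the $K_1$ multiplier but not on $K_0$ is the other point demanding care; both facts are already established in Proposition~\ref{prop:genbs}, so the argument is a bookkeeping assembly of the preceding lemmas rather than any new analysis.
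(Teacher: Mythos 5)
Your proposal is correct and follows essentially the same route as the paper: the paper's (very terse) proof likewise deduces the splittings $K_i(C^*(\Gg)) \cong \coker(\Lambda_i) \oplus \ker(\Lambda_{1-i})$ from freeness of $K_i(B)$ via the six-term sequence of Theorem~\ref{thm:ktheory}, and obtains the formulas for $\Lambda_0$ and $\Lambda_1$ by combining Proposition~\ref{prop:genbs} (equivalently, Lemma~\ref{lem:kunit} on $K_0$) for the non-backtracking summands with Lemma~\ref{lem:eebarkk} for the backtracking summand $D_{f\ol{f}}$. Your write-up simply makes explicit the bookkeeping that the paper leaves implicit, including the splitting-by-projectivity step and the identification $|\Sigma_e| = |m_e|$.
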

\begin{proof}
	Since $K_0(B) = \bigoplus_{e \in \Gamma^1} \ZZ[1_e]$ and $K_1(B)  = \bigoplus_{e \in \Gamma^1} \ZZ[u_e]$ are free abelian it follows that $K_i(C^*(\Gg)) \cong \coker(\Lambda_i) \oplus \ker(\Lambda_{1-i})$. The descriptions of $\Lambda_0$ and $\Lambda_1$ follow immediately from Proposition~\ref{prop:genbs} together with Lemma~\ref{lem:eebarkk}.
\end{proof}

\section{Poincar\'e Duality}
\label{sec:poincare}

In this section we focus our attention on the case where $\Gg$ is the loop of groups considered in Example~\ref{ex:loop}. In this case Theorem~\ref{thm:bmpst17iso} asserts that $C^*(\Gg)$ is isomorphic to $C( \partial X_\Gg) \rtimes BS(n,m)$, where $X_\Gg$ is an $(|m|+|n|)$-regular tree acted on by the Baumslag-Solitar group $BS(n,m) = \langle a,t \mid ta^nt^{-1} = a^m \rangle$, \cite[p. 126]{BMPST17}.

We ask the question whether $C( \partial X_\Gg) \rtimes BS(n,m)$ satisfies
Poincar\'{e} duality in Kasparov theory. Poincar\'{e} duality for  hyperbolic groups
acting on their Gromov boundary has been proved by Emerson \cite{Eme03},
and similar results have been obtained for the Ruelle algebras of a quite general class of hyperbolic dynamical systems \cite{KPW17}.
The Baumslag-Solitar groups are quite different from the class of
hyperbolic groups, and so too is our method of proof.

Here we utilise the methods of \cite{RRS19}. There a systematic method
for deciding if fundamental classes in $K$-theory and $K$-homology
exist for a given Cuntz-Pimsner algebra whose coefficient algebra satisfies
Poncar\'e duality.

In our case the coefficient algebra will be (Morita equivalent to) a direct sum
of copies of $C^*(\ZZ)\cong C(\TT)$, which does indeed satisfy Poincar\'e duality.
Then \cite{RRS19} provides necessary and sufficient conditions on the dynamics
to be able to lift the $K$-theory fundamental class for $C(\TT)$ to a $K$-theory
fundamental class for $C^*(\Gg)$. The existence of a $K$-homology fundamental class
we leave open.

Nevertheless, even with `half' of the Poincar\'{e} duality,
the methods of \cite{RRS19} give isomorphisms between $K$-theory and $K$-homology.
Thus we obtain a computation of the $K$-homology for the $C^*$-algebras of the loops of groups from Example~\ref{ex:loop},
in spite of the large amount of torsion in the $K$-theory preventing the use of duality (i.e. index pairing)
methods.

Given an $A$--$B$-correspondence $(\phi,X)$, taking the conjugate module $X^*$
gives a $B$--$A$-module. We write $\flat(x)\in X^*$ for the copy of $x\in X$.
Denoting the opposite algebra of $A$ by $A^{op}$ and similarly for $B^{op}$, we may regard $X^*$ as an $A^{op}$--$B^{op}$-module $(\phi^{op},X^*)$ via
\begin{equation}
	\phi^{op}(a^{op})\flat(x)\,b^{op}:=\flat(\phi(a^*)xb^*),\quad a\in A,\ b\in B, x\in X.
	\label{eq:op-actions}
\end{equation}
The map $\flat\colon X\to X^*$ is anti-linear.
As shown in \cite[Section 2]{RRS19}, the left $B$-valued inner product
on $X^*$ can also be reinterpreted as a right $B^{op}$-valued inner product.
Thus the conjugate $(\phi^{op},X^*)$ of an $A$--$B$-correspondence is an $A^{op}$--$B^{op}$-correspondence.

Analogous to the conjugate module, one can define the conjugate algebra.
Given a $C^*$-algebra $A$, the conjugate algebra $\overline{A}$
has the same product and
adjoint, but the conjugate scalar multiplication, \cite[p. 157]{RW98}.
The map $a\mapsto a^*$ gives a (complex linear) isomorphism $\overline{A}\to A^{op}$. If $A$ is commutative we of course have $A\cong A^{op}$, and this proves
that if $A$ is a commutative $C^*$-algebra then $A\cong \overline{A}$.

\begin{lem} With  the convention $C(\TT)^0=\{0\}$ and defining
	\begin{equation}
		D=C(\TT)^{|n|}\oplus C(\TT)^{|m|}\oplus C(\TT)^{|n|-1}\oplus C(\TT)^{|m|-1},
		\label{eq:sum-triv}
	\end{equation}
	let $(\ol{\varphi},D)$ be the graph of groups correspondence over $ B=C(\TT)\oplus C(\TT)$ for the loop of groups from Example~\ref{ex:loop}, and let $(\ol{\varphi}^{op},D^{*op})$ be the conjugate correspondence.
	Then $D\cong D^{*op}$, where we use the fact that $C(\TT)^{op}=C(\TT)\cong \overline{C(\TT)}$.
\end{lem}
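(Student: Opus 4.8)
The plan is to reduce the claim to the four length-two paths $ee$, $\ol{e}\,\ol{e}$, $e\ol{e}$ and $\ol{e}e$ of the loop and to prove $D_{fe}\cong D_{fe}^{*op}$ summand by summand. Since the conjugate-op construction commutes with direct sums, $D^{*op}=\bigoplus_{fe\in\Gamma^2}D_{fe}^{*op}$, matching the decomposition of $D$ recorded in \eqref{eq:sum-triv}. The first point to nail down is that conjugation via \eqref{eq:op-actions} \emph{preserves the direction} of a correspondence: reading \eqref{eq:op-actions} off, the left $A^{op}$-action on $D_{fe}^{*op}$ is built from the original left action $\ol{\varphi}_{fe}$ and the right $B^{op}$-action from the original right action. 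As all coefficient algebras here are copies of $C(\TT)$, hence equal to their opposites, each $D_{fe}^{*op}$ is again a $B_f$--$B_e$-correspondence of the same rank as $D_{fe}$. In particular no summand can be matched with a different one, and it suffices to treat each $D_{fe}$ individually.

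The two ``diagonal'' summands $D_{e\ol{e}}$ and $D_{\ol{e}e}$ are immediate. By Lemma~\ref{lem:eebarkk} the left and right actions of $C(\TT)$ on these modules coincide (after the identification $G_e=G_{\ol{e}}$), so each is a direct sum of copies of the symmetric bimodule $C(\TT)$, on which the generator acts as a scalar times the identity. Such a symmetric bimodule is carried to itself by the composite of $\flat$ with the $*$-isomorphism $\overline{C(\TT)}\cong C(\TT)$ given by complex conjugation, whence $D_{e\ol{e}}\cong D_{e\ol{e}}^{*op}$ and $D_{\ol{e}e}\cong D_{\ol{e}e}^{*op}$.

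For the remaining summands $D_{ee}\cong C(\TT)^{|n|}$ and $D_{\ol{e}\,\ol{e}}\cong C(\TT)^{|m|}$ I would identify each as a free right $C(\TT)$-module on which the left action is encoded by the unitary $U:=\ol{\varphi}_{ee}(u_e)\in M_{|n|}(C(\TT))$ (respectively $\ol{\varphi}_{\ol{e}\,\ol{e}}(u_{\ol{e}})$). Reading \eqref{eq:leftactionD} on the generator shows that $U$ is a \emph{monomial} unitary: it permutes the $|n|$ standard generators by $\mu\mapsto(\mu+m)\bmod|n|$ and multiplies by powers of $u_e$ (the computation underlying Proposition~\ref{prop:genbs}). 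The key step is then to transport the left action across the identification $D_{ee}^{*op}\cong C(\TT)^{|n|}$ supplied by $\flat$ and complex conjugation: a short calculation shows the left action becomes the \emph{transpose} $U^{T}$ rather than the adjoint $U^{*}$, the two complex conjugations cancelling the adjoint appearing in \eqref{eq:op-actions}. Thus the problem reduces to showing that $U$ and $U^{T}$ are unitarily equivalent in $M_{|n|}(C(\TT))$.

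Finally I would prove $U\cong U^{T}$ for any monomial unitary with powers-of-$u_e$ entries. Conjugating by a diagonal unitary $\diag(u_e^{b_\mu})$ alters the phase $a_\mu$ on a generator to $a_\mu+b_{\tau(\mu)}-b_\mu$, where $\tau$ is the underlying permutation; since these increments telescope to zero around each cycle of $\tau$, one may concentrate all the phase of a cycle into a single entry. Hence every such monomial unitary is unitarily equivalent to a normal form determined only by the cycle type of $\tau$ and the total phase $\sum_{\mu\in c}a_\mu$ around each cycle $c$. Now $U^{T}$ has underlying permutation $\tau^{-1}$, which has the same cycle type as $\tau$, and the same total phase around each (reversed) cycle, so $U$ and $U^{T}$ share a normal form and are unitarily equivalent; the implementing unitary is precisely a correspondence isomorphism $D_{ee}\cong D_{ee}^{*op}$, and likewise for $D_{\ol{e}\,\ol{e}}$. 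Assembling the four summands yields $D\cong D^{*op}$. I expect the main obstacle to be the transpose subtlety: one must check carefully that conjugation yields $U^{T}$ and not $U^{*}$, and that $U\cong U^{T}$ genuinely holds over $C(\TT)$, which fails for a general unitary loop $\TT\to U(r)$ and is rescued here only by the monomial structure of the left action.
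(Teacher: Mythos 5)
Your proposal is correct, but it takes a genuinely different route from the paper's proof --- and the extra care you take is not wasted. The paper's argument is a one-liner: it defines a single map $S=\flat\circ *$ on all of $D$ (componentwise adjoint followed by $\flat$), which on the two ``diagonal'' summands is exactly your map, and then asserts in one displayed computation, invoking commutativity of $C(\TT)$, that $S$ intertwines the left actions. That computation, however, replaces the componentwise adjoint $(\ol{\varphi}(a)_{ji})^*$ by $\ol{\varphi}(a^*)_{ji}$, whereas the $*$-homomorphism property only gives $(\ol{\varphi}(a)_{ji})^*=\ol{\varphi}(a^*)_{ij}$; the two coincide precisely when the matrices $\ol{\varphi}(a)$ are symmetric. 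They are symmetric on $D_{\ol{e}e}$ and $D_{e\ol{e}}$, where the action is diagonal by Lemma~\ref{lem:eebarkk}, so there the paper's map works; but on $D_{ee}$ and $D_{\ol{e}\,\ol{e}}$ the generator acts by a monomial unitary that is not symmetric in general --- for $n=2$, $m=1$ one gets $U=\left(\begin{smallmatrix}0&u\\1&0\end{smallmatrix}\right)$, and one checks directly that $S(Ux)\neq \ol{\varphi}^{op}(u^{op})S(x)$. So the ``transpose subtlety'' you flag is a genuine gap in the paper's own argument: the comparison really is between $U$ and $U^{T}$, and an additional step is required. Your step --- diagonal conjugation changes the phase data by coboundaries $a_\mu\mapsto a_\mu+b_{\tau(\mu)}-b_\mu$, so only the total phase around each cycle of $\tau$ matters; $U^{T}$ has permutation $\tau^{-1}$, whose cycles are the same sets with the same totals; and a cycle-reversing permutation conjugates $\tau^{-1}$ to $\tau$ preserving each cycle setwise --- is correct and supplies exactly what is missing (in the $2\times 2$ example, $W=\diag(1,u)$ gives $WUW^{*}=U^{T}$). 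In short: the paper's map is shorter but only verifies the claim on the summands where $\ol{\varphi}(a)$ is symmetric, while your normal-form argument for monomial unitaries over $C(\TT)$ handles the remaining summands; composing $S$ with your implementing unitary yields the asserted isomorphism $D\cong D^{*op}$, so the lemma --- and everything downstream of it in Section~\ref{sec:poincare} --- stands, with your route being the one that actually completes the proof.
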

\begin{proof}
	As above, the conjugate correspondence $(\ol{\varphi}^{op},D^{*op})$ is
	a $C(\TT)$--$C(\TT)$-correspondence.
	We observe that the left action on $D$ is highly non-trivial, while the right action
	is just componentwise multiplication, and this remains true (up to a conjugation) for
	$D^{*op}$, as Equation \eqref{eq:op-actions} or \cite[p. 1121]{RRS19} shows.

	Rather than using the
	map $\flat:\,D\to D^{*op}$ we utilise the fact that $D$ is trivial (as a right module) to define
	$S:\,D\to D^{*op}$ to be $\flat\circ *$ where $*$ is
	the componentwise $C^*$-adjoint. Then
	$S$ is a linear map, and intertwines the actions of
	$C(\TT)\oplus C(\TT)$ since
	\begin{align*}
		S(\ol{\varphi}(a)xb)=\flat(b^*(x)_i^*\ol{\varphi}(a^*)_{ji})=\flat(\ol{\varphi}(a^*)_{ji}(x)_i^*b^*)
		=\ol{\varphi}^{op}(a^{op})\flat(x^*)b^{op}
	\end{align*}
	using the commutativity of $C(\TT)$.
	That $S$ is a bijection is clear.
\end{proof}
The same argument shows that $[D\ox A^{op}]=[A\ox D^{*op}]\in KK(\CC,A\ox A^{op})$.
\begin{thm}
	\label{thm:PD}
	Consider a loop of groups $\Gg$ as in Example \ref{ex:loop}, so
	that $C^*(\Gg) \cong C(\partial X_{\Gg})\rtimes BS(m,n)$ where $BS(m,n)$ is a Baumslag-Solitar group. Then there exists
	a $K$-theory class $\delta\in KK(\CC,C^*(\Gg)\ox C^*(\Gg))$ such that
	the Kasparov product with $\delta$
	\[
		\delta\ox_{C^*(\Gg)}\cdot \,:\,K^*(C^*(\Gg))\to K_*(C^*(\Gg))
	\]
	is an isomorphism.
\end{thm}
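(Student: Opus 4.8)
The plan is to transport the statement to the Morita equivalent algebra $\Oo_D$ and then apply the fundamental-class lifting technology of \cite{RRS19}. By Proposition~\ref{prop:DandEmoritaeq} and Theorem~\ref{thm:isomorphism} we have $C^*(\Gg)\cong\Oo_E\sim_{me}\Oo_D$, and since a class implementing a duality isomorphism is transported across a Morita equivalence to another such class, it suffices to produce $\delta_D\in KK(\CC,\Oo_D\ox\Oo_D)$ whose Kasparov product gives an isomorphism $K^*(\Oo_D)\to K_*(\Oo_D)$. The coefficient algebra here is $B=C(\TT)\oplus C(\TT)$, and each summand $C^*(\ZZ)\cong C(\TT)$ satisfies Poincar\'e duality: the circle is a closed spin$^{c}$ manifold, so there is a $K$-theory fundamental class $\Delta\in KK(\CC,C(\TT)\ox C(\TT))$ whose product induces an isomorphism $K^*(C(\TT))\to K_*(C(\TT))$. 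Summing over the two edges yields a fundamental class $\Delta_B\in KK(\CC,B\ox B)$ for $B$.

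With this in hand, the first substantive step is to invoke the lifting result of \cite{RRS19}, which gives necessary and sufficient conditions for the fundamental $K$-theory class of the coefficient algebra of a Cuntz-Pimsner algebra to lift to a fundamental $K$-theory class of the Cuntz-Pimsner algebra itself. The essential hypothesis is a self-conjugacy condition relating the defining correspondence to its conjugate, namely $[D\ox B^{op}]=[B\ox D^{*op}]$ in $KK(\CC,B\ox B^{op})$. This is precisely what the lemma above and the remark following it establish: because $D$ is trivial as a right module and $C(\TT)$ is commutative, the map $S=\flat\circ{*}$ realises an isomorphism $D\cong D^{*op}$ of correspondences, which in turn upgrades to the required identity of Kasparov classes. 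Granting the hypotheses of \cite{RRS19}, its construction produces the candidate class $\delta_D$, and pushing it across the Morita equivalence gives the sought $\delta\in KK(\CC,C^*(\Gg)\ox C^*(\Gg))$.

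To see that product with $\delta_D$ is an isomorphism---and here is the point that only \emph{half} of the duality is needed---I would run the five-lemma comparison underlying \cite{RRS19}. Product with $\delta_D$ defines a natural transformation from the $K$-homology six-term sequence of $\Oo_D$ to the $K$-theory six-term sequence of Theorem~\ref{thm:ktheory}. On the coefficient terms $\bigoplus_{e}K^*(C(\TT))$ and $\bigoplus_{e}K_*(C(\TT))$ this transformation restricts to the duality isomorphism for $C(\TT)$ induced by $\Delta$, which is an isomorphism by the Poincar\'e duality of the circle. One checks that under these coefficient-level isomorphisms the maps $\Lambda_i=\cdot\ox_B(\id-[D])$ of Theorem~\ref{thm:ktheory} are carried to the dual maps in the $K$-homology sequence; this is exactly where the symmetry $D\cong D^{*op}$ is used a second time, to identify the transpose of the adjacency operator $\cdot\ox_B[D]$ with itself, using the explicit multiplication-by-$m_e$ descriptions of Proposition~\ref{prop:genbs}. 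The five lemma then forces product with $\delta_D$ to be an isomorphism on the $\Oo_D$ terms, without any need for an inverse $K$-homology fundamental class.

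The step I expect to be the main obstacle is verifying that this comparison ladder genuinely commutes---that is, that the lifted class $\delta_D$ is compatible with both the connecting homomorphisms and the horizontal maps $\Lambda_i$ of the two sequences. Concretely, this reduces to showing that $\Delta_B$ intertwines the endomorphism $\cdot\ox_B[D]$ of $K_*(B)$ with its dual on $K^*(B)$, a computation that rests on the self-conjugacy of $D$ together with the precise behaviour of $\Delta$ under the sign and multiplication maps of Proposition~\ref{prop:genbs}. The one genuinely delicate technical point within this is matching gradings and signs, so that the vertical maps are honest duality isomorphisms rather than isomorphisms up to a twist; once this is settled, exactness of both sequences and the five lemma deliver the conclusion with no further input, and Corollary~\ref{cor:summary} then records the statement for $C^*(\Gg)\cong C(\partial X_{\Gg})\rtimes BS(m,n)$.
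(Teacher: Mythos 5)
Your proposal reproduces the paper's own proof in all of its essential structure: the coefficient algebra $B=C(\TT)\oplus C(\TT)$ with its (odd) $K$-theory fundamental class, the key lemma that the loop-of-groups correspondence satisfies $D\cong D^{*op}$ via $\flat\circ{*}$ (using triviality of the right module and commutativity of $C(\TT)$), the appeal to \cite{RRS19} (their Lemma 4.7 and Theorem 4.8) both to manufacture $\delta$ and to justify that the two compatibility conditions with $\iota_*$, $\iota^*$ and the boundary maps are all that is needed, and the five lemma applied to the ladder between the $K$-homological and $K$-theoretic Pimsner sequences; your explicit Morita transport through $\Oo_D$ is how the paper implicitly phrases matters via Theorem~\ref{thm:ktheory}.

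There is, however, one concrete misstep, and it occurs exactly at the step you single out as the main obstacle. You propose to check commutativity of the $\Lambda_i$-squares by ``identifying the transpose of the adjacency operator $\cdot\ox_B[D]$ with itself, using the explicit multiplication-by-$m_e$ descriptions of Proposition~\ref{prop:genbs}''. This verification cannot succeed as described. Because the fundamental class $\beta$ of $C(\TT)$ is an \emph{odd} class, the vertical maps of the ladder shift degree: $K^0(B)$ sits above $K_1(B)$ and $K^1(B)$ above $K_0(B)$ (this is visible in the paper's displayed diagram). Hence the square whose top edge is the $K$-homological map dual to $\Lambda_0$ has $\Lambda_1$, not $\Lambda_0$, along the bottom, so the compatibility required couples the $K_0$-action of $[D]$ to its $K_1$-action. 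By Lemma~\ref{lem:kunit} and Proposition~\ref{prop:genbs} these two actions are genuinely different --- on $K_0$ the product with $[D_{fe}]$ is multiplication by the index $|\Sigma_e|=|n|$, while on $K_1$ it is multiplication by $\sgn(n)m$ --- and none of the resulting integer matrices is equal to its own transpose once $|n|\ne|m|$. So an entrywise check of ``self-transposedness'' from Proposition~\ref{prop:genbs} would simply fail. The paper never makes, nor needs, such a symmetry claim: it argues at the level of $KK$-classes, deducing $\beta^2\ox_B[D]=\beta^2\ox_B[D^{*op}]$ from the conjugation identity $[D]=[D^{*op}]$ together with the definitions of the two products $\beta^2\ox_B[D]$ and $\beta^2\ox_B[D^{*op}]$ (which build in both the degree shift and the passage to the conjugate correspondence), and then cites \cite{RRS19} for a class $\delta$ satisfying the two conditions in the ladder. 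Your overall architecture is therefore the same as the paper's, but the concrete verification you sketch for the crucial commutativity must be replaced by this formal argument through $D^{*op}$; as stated, that step of your plan is false rather than merely laborious.
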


\begin{proof} First of all $C^*(\Gg)\cong C^*(\Gg)^{op}$ via the inverse map of the transformation groupoid underlying the crossed product $C(\partial X_\Gg) \rtimes BS(n,m)$,  \cite[Theorem 2.1]{BS17}.
	The algebra $B$ for these examples is $C(\TT)\oplus C(\TT)$, which
	satisfies Poincar\'{e} duality, \cite{Kas88}, \cite[Lemma 3.5]{RRS19}, having  fundamental class the direct sum  of the fundamental classes for the summands.
	Let $[z] \in KK^1(\CC, C(\TT))$ denote the class of the unitary $z \mapsto z$, and let $[\iota_{\CC,C(\TT)}] \in KK^1(\CC, C(\TT))$ denote the class of the unital inclusion of $\CC$ into $C(\TT)$.
	The $K$-theory fundamental class
	for the circle is $\beta=[z]\ox[\iota_{\CC,C(\TT)}]-[\iota_{\CC,C(\TT)}]\ox[z]\in KK^1(\CC,C(\TT)\ox C(\TT))$, and we let $\beta^2=\beta\oplus\beta$ be the corresponding class for
	$B=C(\TT)\oplus C(\TT)$.

	We summarise the argument of \cite[Theorem 4.8]{RRS19}.
	Using $[D]=[D^{*op}]$, $[D\ox B]=[B\ox D^{*op}]$ and the  definition
	\[
		\beta^2\ox_{B}[D]:=\beta^2\ox_{B\ox B}[D\ox C(\TT)],\qquad
		\beta^2\ox_{B}[D^{*op}]:=\beta^2\ox_{B\ox B}[C(\TT)\ox D^{*op}],
	\]
	we see that $\beta^2\ox_{B}[D]=\beta\ox_{B}[D^{*op}]$. Hence,
	with $\iota:\,B\to C^*(\Gg)$ the inclusion and  $\partial^*$, $\partial_*$ denoting
	boundary maps,
	the diagram
	\[
		\xymatrix{ \cdots\ar[r]&K^0(B)\ar[r]^{1-D}\ar[d]_{\beta^2\ox}&K^0(B)\ar[r]^{\partial^*}\ar[d]_{\beta^2\ox}&K^1(C^*(\Gg))\ar[r]^{\iota^*}\ar[d]_{\delta\ox}&K^1(B)\ar[r]^{1-D}\ar[d]_{\beta^2\ox}&K^1(B)\ar[r]^{\partial^*}\ar[d]_{\beta^2\ox}&\cdots\\
			\cdots\ar[r]&K_1(B)\ar[r]^{1-D}&K_1(B)\ar[r]^{\iota_*}&K_1(C^*(\Gg))\ar[r]^{\partial_*}&K_0(B)\ar[r]^{1-D}&K_0(B)\ar[r]^{\iota_*}&\cdots
		}
	\]
	commutes if and only if
	\begin{equation}
		(\delta\ox\cdot)\circ \partial^*=\iota_*\circ (\beta^2\ox\cdot)\quad \mbox{and}\quad
		\partial_*\circ(\delta\ox\cdot)=(\beta^2\ox\cdot)\circ\iota^*.
		\label{eq:bob}
	\end{equation}
	As the Kasparov product $\beta^2\ox_B\cdot:\,K^j(B)\to K_{j+1}(B)$
	is an isomorphism for $j=0,1$, the five lemma tells us that if if these
	two conditions on $\delta$ are satisfied, the Kasparov product with $\delta$ will provide an isomorphism.


	Finally, since $\beta^2\ox_B[D]=\beta^2\ox_{B}[D^{*op}]$, a class $\delta$ satisfying
	the two conditions \eqref{eq:bob} can be explicitly constructed. The recipe is given in
	\cite[Lemma 4.7]{RRS19}.
\end{proof}

\begin{cor}
	The $K$-homology of $B(m,n)$ is given by
	\begin{align*}
		K^0(C^*(\Gg)) & \cong
		\begin{cases}
			\ZZ^4
			                                             & \text{if } mn =1;              \\
			\ZZ^2 \oplus\frac{\ZZ}{\gcd(1-|n|,1-|m|) \ZZ}
			                                             & \text{if } mn > 1;             \\
			\ZZ \oplus \frac{\ZZ}{\gcd(1-|n|,1-|m|) \ZZ} & \text{if } mn < 0; \text{ and}
		\end{cases} \\
		K^1(C^*(\Gg)) & \cong
		\begin{cases}
			\ZZ^4 & \text{if } mn = 1;                             \\
			\ZZ^2 \oplus \frac{\ZZ}{\gcd(1-|n|,1-|m|) \ZZ}
			      & \text{if } mn > 1;                             \\
			(\ZZ \oplus \frac{\ZZ}{2\ZZ})^2
			      & \text{if } mn = -1;                            \\
			\ZZ \oplus \frac{\ZZ}{2\ZZ} \oplus \frac{\ZZ}{(|m|+|n|)\ZZ}
			      & \text{if } mn < -1 \text{ is odd}; \text{ and} \\
			\ZZ \oplus \frac{\ZZ}{2(|m|+|n|)}
			      & \text{if } mn<-1 \text{ is even.}
		\end{cases}
	\end{align*}
\end{cor}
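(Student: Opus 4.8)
The plan is to obtain the corollary as an immediate consequence of the duality isomorphism established in Theorem~\ref{thm:PD} combined with the $K$-theory computations already performed in Example~\ref{ex:loop}. First I would invoke Theorem~\ref{thm:PD}, which furnishes a class $\delta \in KK(\CC, C^*(\Gg) \otimes C^*(\Gg))$ for which the Kasparov product $\delta \otimes_{C^*(\Gg)} \cdot \colon K^*(C^*(\Gg)) \to K_*(C^*(\Gg))$ is an isomorphism. This reduces the computation of the $K$-homology of $C^*(\Gg)$ entirely to the already-known $K$-theory, so no new $KK$-theoretic input beyond Theorem~\ref{thm:PD} is required.

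The only point that genuinely needs care is the effect of the $\ZZ/2$-grading under the duality map, and this is where I would focus the argument. Since $\delta$ is an even (degree-zero) Kasparov class, the product with $\delta$ is degree-\emph{preserving} rather than degree-reversing: for $x \in K^j(C^*(\Gg)) = KK^j(C^*(\Gg),\CC)$ one has $\delta \otimes_{C^*(\Gg)} x \in KK^{j}(\CC, C^*(\Gg)) = K_j(C^*(\Gg))$. This is already visible in the commuting diagram of Theorem~\ref{thm:PD}, whose central vertical arrow $\delta \otimes \cdot$ runs from $K^1(C^*(\Gg))$ to $K_1(C^*(\Gg))$; applying the five-lemma argument behind that theorem to the even and odd windows of the (periodic) diagram separately yields isomorphisms in each degree, namely $K^0(C^*(\Gg)) \cong K_0(C^*(\Gg))$ and $K^1(C^*(\Gg)) \cong K_1(C^*(\Gg))$. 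I would state explicitly that the grading is preserved, not interchanged, so that the two cases match up correctly with the $K$-theory.

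With these degreewise isomorphisms in hand, the final step is purely a matter of bookkeeping: substitute the values of $K_0(C^*(\Gg))$ and $K_1(C^*(\Gg))$ recorded in Example~\ref{ex:loop}, splitting into the cases $mn = 1$, $mn > 1$, $mn = -1$, $mn < -1$ odd, and $mn < -1$ even. The resulting abelian groups are exactly those displayed in the statement. I do not expect any real obstacle here; the substantive work is carried out in Theorem~\ref{thm:PD} and Example~\ref{ex:loop}, and the only thing to be vigilant about is the parity convention, since a degree-reversing duality (as one meets for odd-dimensional manifolds) would swap $K^0$ with $K_1$ and produce an incorrect answer.
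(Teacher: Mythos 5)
Your proposal is correct and follows essentially the same route as the paper: the paper's (one-line) proof likewise applies the class $\delta$ from Theorem~\ref{thm:PD} to transfer the $K$-theory groups of Example~\ref{ex:loop} degreewise to $K$-homology. Your extra care about the parity — that $\delta$ is an even class, so the duality isomorphism preserves rather than reverses degree, matching $K^j(C^*(\Gg))\cong K_j(C^*(\Gg))$ — is exactly the point the paper leaves implicit, and you resolve it correctly.
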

\begin{proof}
	The class $\delta$ provides an isomorphism
	$K^j(C^*(\Gg))\cong K_j(C^*(\Gg^{op}))\cong K_j(C^*(\Gg^{op}))$.
\end{proof}
No method employing duality (index) pairings between $K$-theory and $K$-homology can obtain this result when $mn\neq 1$ due to the torsion subgroups: see for instance \cite[Chapter 7]{HR00}.


%

\bigskip{\footnotesize%
	\textsc{School of Mathematics and Applied Statistics,
		University of Wollongong, Wollongong
		NSW  2522, Australia} \par
	\textit{E-mail addresses} A.~Mundey: \texttt{amundey@uow.edu.au} \\
	\hphantom{E-mail addresses}\quad\ \,\,A.~Rennie: \texttt{renniea@uow.edu.au}
}
\end{document}